\documentclass[11pt,reqno]{amsart}
\usepackage{amsmath}
\usepackage{amsthm}
\usepackage{amssymb}

\usepackage{amsmath}
\usepackage{amsthm}
\usepackage{amssymb}
\usepackage{fancyhdr}
\usepackage{enumerate}
\usepackage{amscd}
\usepackage[all]{xy}
\usepackage{graphicx}

\setlength{\parskip}{2ex}

\theoremstyle{remark}
\newtheorem*{Remark}{Remark}

\theoremstyle{plain}
\newtheorem{THM}{Theorem}
\newtheorem{lemma}{Lemma}[section]
\newtheorem{prop}[lemma]{Proposition}
\newtheorem{thm}{Theorem}[section]

\newtheorem{defi}[lemma]{Definition}
\newtheorem*{rmk}{Remark}
\newtheorem{cor}[lemma]{Corollary}
\numberwithin{equation}{section}


\begin{document}
\title{On the Center of Mass of Isolated Systems with General Asymptotics}
\author{Lan-Hsuan Huang}
\date{}
\begin{abstract}
We propose a definition of center of mass for asymptotically flat manifolds satisfying Regge-Teitelboim condition at infinity. This definition has a coordinate-free expression and natural properties. Furthermore, we prove that our definition is consistent both with the one proposed by Corvino and Schoen and another by Huisken and Yau. The main tool is a new density theorem for data satisfying the Regge-Teitelboim condition. 
\end{abstract}
\maketitle
\pagestyle{myheadings}
\markright{On the Center of Mass of Isolated Systems with General Asymptotics}


\section{Introduction}
A $3$-manifold $M$ with a Riemannian metric $g$ and a two-tensor $K$ is called a vacuum initial data set $( M, g ,K )$ if $g$ and $K$ satisfy the constraint equations 
\begin{align} \label{ConE}
		 R_g - | K |_g^2 + H^2 = 0, \notag&&\\
		\mbox{div}_g ( K ) - d H = 0, &&
\end{align}
where $ R_g $ is the scalar curvature of $M$ and $ H = Tr_g ( K ) = g^{ ij } K_{ ij }$. We say $ ( M, g, K) $ is asymptotically flat (AF) if it is a vacuum initial data set and there exists a coordinate $ \{ x \}$ outside a compact set, say $B_{R_0}, $ in $M$ such that, for some $\delta \in (1/2, 1]$, 
\begin{align}\label{AF}
		g_{ij}(x) = \delta_{ij} + h_{ij}(x),  \, h_{ij} = O( |x|^{-\delta}) &\qquad K_{ij}(x) = O( |x|^{-1-\delta} )\notag \\ 
		g_{ij,k}(x) = O ( |x|^{-1-\delta} ) &\qquad K_{ij,k}(x)= O( |x|^{-2-\delta} )\notag \\
		g_{ij,kl}(x) = O ( |x|^{-2-\delta} ) & \qquad K_{ij,kl} (x) = O( |x|^{-3-\delta})
\end{align}
and similarly for higher derivatives, up to the third derivatives on $g$ and up to the second derivatives on $K$. For AF manifolds, the ADM mass $m$ is defined by
\begin{align*}
		m = \frac{1}{ 16\pi } \lim_{r\rightarrow \infty} \int_{ |x| = r} \big( g_{ij,i}- g_{ii,j} \big)\nu_g^j \, d\sigma_g,
\end{align*}
where $ \{ |x| = r \} $ is the Euclidean sphere, $\nu_g$ is the unit outward normal vector field with respect to the metric $g$, and $ d \sigma_g$ is the volume form induced from $(M,g)$. We remark that if one replaces $\nu_g$ by the unit normal vector field  $\nu_0$ with respect to the Euclidean metric and $d \sigma_g$ by the volume form $d\sigma_0$ with respect to the induced metric from the Euclidean space, the limit is in the same. Similarly, the remark holds for \eqref{CTM}, \eqref{ADMmass} and \eqref{CS}.

For the case in which we are interested, we require asymptotic symmetry on $ ( M, g, K ) $. We say that $( M, g, K )$ is asymptotically flat satisfying Regge-Teitelboim condition (AF-RT) if $ ( M, g, K ) $ is AF  and $g, K$ satisfy these asymptotically even/odd conditions
\begin{align} \label{AF-RT}
g^{odd}_{ij} (x) = O ( |x|^{-1-\delta} ) &\qquad K^{even}_{ij}(x)  = O( |x|^{-2-\delta} )\notag \\
\big( g^{odd}_{ij} \big)_{,k} (x) = O ( |x|^{-2-\delta} ) & \qquad \big( K^{even}_{ij} \big)_{,k} (x) = O( |x|^{-3-\delta} )
\end{align}
and on higher derivatives, where $f^{odd} (x) = f( x ) - f ( -x )$ and $ f^{even} (x) = f( x ) + f( -x )$, \cite{RT}. Notice that $f^{odd}$ and $f^{even}$ are only defined outside $B_{R_0}$ in which the coordinates $\{x\}$ are defined. It is proved by Corvino and Schoen in \cite{CS} that AF-RT manifolds form a dense subset of AF manifolds in some suitable weighted Sobolev spaces. 

For $( M, g, K )$ satisfying AF-RT, we propose an intrinsic definition of center of mass\footnote{The intrinsic definition can be generalized to $n$-dimensional manifolds if we replace the factor $\frac{1}{ 16\pi m}$ by a suitable constant depending on $n$ and $\omega_n$, where $\omega_n$ is the volume of the unit ball in $\mathbb{R}^n$. Hence the following arguments work more generally for $n$-dimensional manifolds, but for simplicity, we assume $n = 3$.} suggested by Richard Schoen,
\begin{align}
		C_I^{\alpha} = \frac{1}{ 16\pi m}\lim_{ r \rightarrow \infty} \int_{ |x| = r } \left( R_{ij} - \frac{1}{2} R_g g_{ij} \right) Y_{(\alpha)}^i \nu_g^j d{\sigma}_g , \qquad \alpha = 1, 2, 3  \label{CTM}
\end{align}
where $R_{ij}$ is the Ricci curvature of $M$ and $Y_{(\alpha)} =  \big(  | x |^2 \delta^{\alpha i} - 2 x^{\alpha} x^i \big) \frac{\partial}{\partial x^i} $ is a Euclidean conformal Killing vector field. We use the Einstein summation convention and sum over repeated indices; though sometimes we employ summation symbols for clarity. The definition is intrinsic because it is a flux integral at infinity of the three-dimensional Einstein tensor contracted with two vector fields $Y_{(\alpha)}$ and $\nu_g$, and $Y_{(\alpha)}$ has a geometric meaning. Moreover, the surface of integration can be defined more geometrically (proposition \ref{GeneSur}). If $(M, g, K) $ is AF-RT, the limit converges. Note that the above expression is not defined when the ADM mass $m$ is zero. However, the center of mass can not be well-defined when $m$ is zero because the positive mass theorem says that $M$ is actually the Euclidean space, if the scalar curvature $R_g \ge 0$ \cite{SY79, SY81}. 

This intrinsic definition is motivated by a similar expression of the ADM mass when $ Y_{(\alpha)}$ in definition (\ref{CTM}) is replaced by $\{ - 2 x^{i}\frac{\partial  }{ \partial x^i}\}$, the radial direction Euclidean conformal Killing vector field, 
\begin{align} \label{ADMmass}
		m = \frac{1}{ 16\pi } \lim_{r\rightarrow \infty} \int_{ |x| = r}  \left( R_{ij} - \frac{1}{2} R_g g_{ij} \right) ( -2 x^i) \nu_g ^j d{\sigma}_g.
\end{align}
The Euclidean conformal Killing vector fields $\{ - 2 x^{i}\frac{\partial  }{ \partial x^i}\}$ and $ Y_{(\alpha)}$ generate dilation and translation near infinity. A detailed discussion about these two vector fields can be found in \cite{CW}. Another motivation comes from the spatial Schwarzschild metric 
$$
		g^S(x) = \bigg( 1 + \frac{ m }{ 2 | x - p | }\bigg)^4 \delta = 1 + \frac{ 2m }{ | x | } + \frac{ 2m x\cdot p  }{ | x |^3} + \frac{ 3 m }{ 2 | x |^3 } + O( |x|^{-3} ).
$$
If we replace the metric in (\ref{CTM}) by $g^S$, $C_I$ is precisely the vector $p$ which indicates the center of the manifold. It is worth mentioning that although in the Schwarzschild case $p$ is not a point in the manifold, one can develop polar coordinates using concentric spheres centered at $p$. 

Other notions of center of mass have been proposed. Huisken and Yau \cite{HY} defined the center of mass for $(M,g, K)$ which are spherically asymptotically flat (SAF), i.e. $(M, g, K)$ is AF and 
\begin{align} \label{SAF}
		&g_{ij}(x) = \left( 1 + \frac{ 2m }{ | x | }\right) \delta_{ij} + p_{ij}\\ \notag
		& \quad p_{ij}(x) = O( |x|^{-2}), \partial^{\alpha} p_{ij}(x) = O( |x|^{ -2 - |\alpha| }), \quad 1 \le | \alpha| \le 4.  
\end{align}
They proved the existence and uniqueness of the constant mean curvature foliation $\{ M_r\}$ for SAF manifolds, where the mean curvature of $M_r$ is $(2/r)- (4m/r^2) + O(r^{-3})$. They showed that the geometric center defined as follows converges:
\begin{align} \label{HY}
		C^{\alpha}_{HY}=\lim_{r \rightarrow \infty} \frac{ \int_{M_r }z^{\alpha } \, d\sigma_0 }{ \int_{ M_r } \, d\sigma_0 }, \quad \alpha = 1,2,3,
\end{align}
where $ z$ is the position vector of $M_r$ in $M$. This definition is also motivated by the spatial Schwarzschild manifold in which the constant mean curvature foliation is $\{ | x - p | = r\}$.  Another definition by Corvino and Schoen in \cite{CS} is defined for AF-RT manifolds by
\begin{align}
	C_{CS}^{\alpha} &=\frac{1}{ 16 \pi m}  \lim_{ r \rightarrow \infty} \left( \int_{ |x| = r}  x^{\alpha} ( g_{ij,i} - g_{ii,j} ) \nu_g^j d\sigma_g \right.\nonumber\\
	&\qquad \qquad\qquad \qquad \qquad\left.- \int_{ |x| = r} ( h_{i\alpha} \nu_g^i - h_{ii} \nu_g^{\alpha}) \,d\sigma_g \right),\label{CS}
\end{align}
where we recall that $h_{ij} = g_{ij} - \delta_{ij}$ , see also \cite{BO, BLP}. One application of this definition in \cite{CS} is the gluing theorem which allows Corvino and Schoen to approximate AF manifolds by solutions which agree with the original data inside a given region and are identical to a suitable Kerr solution outside a compact set. Their definition is a local-coordinate expression which is natural from the Hamiltonian formulation and convenient for calculation purposes, but may obscure interesting geometry.

The main purpose of this paper is to prove that the intrinsic definition (\ref{CTM}) is equivalent to the Corvino--Schoen definition (\ref{CS}). Moreover, for SAF manifolds in which the unique foliation of constant mean curvature surfaces exists, our intrinsic definition is equal to the Huisken--Yau definition \eqref{HY}. In other words, the intrinsic definition is a coordinate-free expression of the Corvino--Schoen definition and it generalizes the Huisken--Yau definition.
\begin{THM} \label{CorvinoSchoen}
Assume $(M,g, K )$ is AF-RT (\ref{AF-RT}). Then 
$$ 
		C_I = C_{CS}.
$$
\end{THM}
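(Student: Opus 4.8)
The plan is to show that the two integrals defining $C_I$ and $C_{CS}$ agree in the limit $r\to\infty$ by reducing both to boundary integrals in the Euclidean coordinate $\{x\}$ and comparing them term by term. First I would expand the Einstein tensor $G_{ij} = R_{ij} - \tfrac12 R_g g_{ij}$ in the asymptotically flat coordinates, keeping track of which terms are linear in $h_{ij} = g_{ij}-\delta_{ij}$ and which are quadratic. The linear part of the Ricci curvature is the familiar expression $R_{ij}^{\mathrm{lin}} = \tfrac12\big( h_{ik,kj} + h_{jk,ki} - h_{ij,kk} - h_{kk,ij}\big)$, and correspondingly one gets a linearized Einstein tensor. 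Under the AF decay \eqref{AF}, the quadratic terms in $G_{ij}$ decay like $O(|x|^{-4})$, while $Y_{(\alpha)}^i = |x|^2\delta^{\alpha i} - 2x^\alpha x^i$ grows like $O(|x|^2)$ and $d\sigma_g = O(r^2)$, so a priori the quadratic contribution over $\{|x|=r\}$ is $O(1)$ and does \emph{not} obviously vanish. This is where the Regge--Teitelboim condition enters: the quadratic terms, being products of $h^{\mathrm{even}}$-type and $h^{\mathrm{even}}$-type leading parts, are asymptotically even, and integrating an asymptotically even $O(|x|^{-4})$ quantity against the odd vector field $Y_{(\alpha)}$ over a large sphere gives a vanishing limit. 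So the first key step is: \emph{the limit defining $C_I$ is unchanged if $G_{ij}$ is replaced by its linearization $G_{ij}^{\mathrm{lin}}$, and $\nu_g, d\sigma_g$ by their Euclidean counterparts $\nu, d\sigma$}.

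Second, I would do the same reduction on the Corvino--Schoen side, which is already written in terms of $h_{ij}$ and is essentially linear; the $g$-dependence in $\nu_g$ and $d\sigma_g$ contributes only lower-order terms that vanish in the limit by the same parity-plus-decay argument (the leading correction terms are again asymptotically even paired against the odd factor $x^\alpha$). So both $C_I$ and $C_{CS}$ are now purely linear functionals of $h$, expressed as limits of boundary integrals over Euclidean spheres.

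Third comes the main computation: showing the two linear boundary integrands differ by a quantity whose integral over $\{|x|=r\}$ tends to zero. Writing everything out, $\int_{|x|=r} G_{ij}^{\mathrm{lin}} Y_{(\alpha)}^i \nu^j\, d\sigma$ must be matched against $\int_{|x|=r}\big[ x^\alpha(h_{ij,i} - h_{ii,j}) - (h_{i\alpha}\nu^i - h_{ii}\nu^\alpha)/|x|\cdot|x|\big]\nu^j\,d\sigma$ (after noting $Y_{(\alpha)}$ contributes the $|x|^2\delta^{\alpha i}$ part giving the $x^\alpha$-weighted ADM-type integrand, and the $-2x^\alpha x^i$ part together with the structure of $G^{\mathrm{lin}}_{ij}$ producing the correction term). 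The strategy is to integrate by parts on the sphere: the difference of the two integrands should be expressible as a tangential divergence on $\{|x|=r\}$ plus a term that is $o(1)$ after integration, so that Stokes' theorem on the closed surface kills it. Concretely one expands $G^{\mathrm{lin}}_{ij} Y^i_{(\alpha)}\nu^j$, uses $\nu^j = x^j/|x|$, collects the terms proportional to $x^\alpha$ (these reassemble into $x^\alpha(h_{ij,i}-h_{ii,j})\nu^j$ up to tangential divergences, matching the first CS integral) and the terms proportional to $|x|^2\delta^{\alpha j}$ or $x^i x^j$ contractions (these reassemble into $-(h_{i\alpha}\nu^i - h_{ii}\nu^\alpha)$ up to tangential divergences, matching the second CS integral). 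Each discarded tangential-divergence term integrates to zero exactly, and each discarded genuinely-remainder term is $O(|x|^{-1})$ pointwise hence $o(1)$ on the sphere, again using AF decay and in the borderline cases the RT parity.

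The main obstacle is precisely this third step: keeping the bookkeeping of the many terms in $G^{\mathrm{lin}}_{ij}Y^i_{(\alpha)}\nu^j$ organized, and correctly identifying which combinations are total tangential divergences on $\{|x|=r\}$. The cleanest way to manage it is to work on the coordinate sphere, write $G^{\mathrm{lin}}_{ij}Y^i_{(\alpha)}\nu^j$ in terms of $h$ and its first and second derivatives, and repeatedly apply the identity $\int_{|x|=r}\mathrm{div}_{S_r}(X)\,d\sigma = 0$ to move derivatives around until the expression is manifestly the CS integrand plus terms of faster decay or terms killed by RT symmetry. I expect no conceptual difficulty beyond this—the decay rates in \eqref{AF} and \eqref{AF-RT} are exactly tuned so that every error term is either an exact divergence or genuinely negligible—but the calculation itself is lengthy and is where all the care is needed.
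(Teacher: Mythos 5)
Your strategy is viable, but it is genuinely different from the proof in the paper. The paper does not linearize and integrate by parts at all: it invokes the Density Theorem (Theorem \ref{DenThm2}) to approximate $(g,\pi)$ by data $(\bar g,\bar\pi)$ with harmonic asymptotics $\bar g=u^4\delta$, $\bar\pi=u^2\mathcal{L}_\delta X$, for which both $C_I(\bar g)$ and $C_{CS}(\bar g)$ can be computed explicitly from the expansions (\ref{Asym1})--(\ref{Asym2}) and seen to coincide; it then shows that both functionals are continuous under this approximation (for $C_I$ this is part of the Density Theorem, for $C_{CS}$ it is checked via Lemma \ref{Conv}), so $C_I(g)=C_{CS}(g)$ up to an arbitrarily small $\epsilon$. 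Your route is the direct computational one. It buys a self-contained, elementary argument that makes transparent exactly where the RT parity is used, and it avoids the heavy PDE input of the density theorem; the paper's route buys a short proof of this theorem at the cost of the density machinery, which however is reused elsewhere (Theorems \ref{ChangeCoord} and \ref{HuiskenYau}). Your third step does go through, and there is a clean way to organize it that you may prefer to ad hoc bookkeeping: both $G^{\mathrm{lin}}_{ij}Y^i_{(\alpha)}$ and the Corvino--Schoen charge density $\mathbb{U}^j= x^\alpha(h_{ij,i}-h_{ii,j})-h_{\alpha j}+h_{ii}\delta_{j\alpha}$ have Euclidean divergence exactly equal to $x^\alpha(h_{ij,ij}-h_{ii,jj})$ (use the linearized Bianchi identity $\partial_jG^{\mathrm{lin}}_{ij}=0$ and the conformal Killing relation $\partial_{(j}Y_{i)}=-2x^\alpha\delta_{ij}$), so their difference is an identically divergence-free local linear expression in $h$ and is therefore a total curl $\partial_kA^{jk}$ with $A^{jk}=-A^{kj}$, whose flux through every sphere vanishes exactly. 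This confirms your expectation that the discrepancy is a tangential divergence.

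One bookkeeping error you should fix: $Y_{(\alpha)}$ is \emph{even}, not odd, since $Y_{(\alpha)}(-x)=|x|^2\delta^{\alpha i}-2(-x^\alpha)(-x^i)=Y_{(\alpha)}(x)$. The parity argument for the $O(1)$ quadratic contribution still works, but because the integrand is $Q_{ij}Y^i_{(\alpha)}\nu^j$ with $Q$ asymptotically even, $Y$ even and $\nu^j=x^j/|x|$ odd, so the product is asymptotically odd and its integral over the centrally symmetric sphere vanishes in the limit. As you have written it (even integrand against an ``odd'' $Y$), the product with the odd $\nu$ would come out even and the argument would fail; the conclusion is saved only by the correct parity of $Y$.
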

\begin{THM}  \label{HuiskenYau}
Assume $ ( M, g, K  ) $ is SAF (\ref{SAF}). Then 
$$
		C_I = C_{HY}.
$$
\end{THM}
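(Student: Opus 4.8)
The plan is to reduce Theorem \ref{HuiskenYau} to Theorem \ref{CorvinoSchoen}. First observe that an SAF (\ref{SAF}) metric automatically satisfies the metric part of the Regge--Teitelboim condition (\ref{AF-RT}): the leading term $\left(1 + \tfrac{2m}{|x|}\right)\delta_{ij}$ is an even function of $x$, so $g_{ij}^{odd} = p_{ij}^{odd} = O(|x|^{-2})$ and $\big(g_{ij}^{odd}\big)_{,k} = O(|x|^{-3})$, and similarly on higher derivatives. Since $C_I$ and $C_{CS}$ are functionals of the metric alone (neither involves $K$), and the proof of Theorem \ref{CorvinoSchoen} invokes only the asymptotic conditions on $g$, the same argument yields $C_I = C_{CS}$ for SAF data. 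It therefore suffices to prove $C_{CS} = C_{HY}$.

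For this I would use the fine structure of the constant mean curvature foliation $\{M_r\}$ established in \cite{HY}. Each leaf is a small Euclidean graph over a round coordinate sphere: there exist $a_r \in \mathbb{R}^3$ with $|a_r|$ uniformly bounded, $\lambda_r > 0$ with $\lambda_r/r \to 1$, and a function $u_r$ on $S^2$, small in $C^1$, such that $M_r = \{\, a_r + (\lambda_r + u_r(\theta))\,\theta : \theta \in S^2 \,\}$, together with the a priori estimates of \cite{HY} controlling $\lambda_r$, $a_r$, and the spherical harmonic modes of $u_r$. Substituting this into the Euclidean barycenter in (\ref{HY}) and expanding, the contributions of the $\ell = 0$ and $\ell \ge 2$ modes of $u_r$ are of strictly lower order, so that after normalizing the representation so that the $\ell = 1$ mode of $u_r$ vanishes one obtains $C_{HY}^\alpha = \lim_{r \to \infty} a_r^\alpha$.

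It then remains to locate $\lim_r a_r$ from the constant mean curvature condition. Since $g_{ij} = \delta_{ij} + \tfrac{2m}{|x|}\delta_{ij} + p_{ij}$, the $g$-mean curvature of a coordinate sphere $\{\,|x - a| = \lambda\,\}$ has an expansion in powers of $\lambda$ whose $\lambda^{-1}$ and $\lambda^{-2}$ dominant terms ($\tfrac{2}{\lambda}$ and $-\tfrac{4m}{\lambda^2}$) are translation invariant, while the next relevant part of the $\lambda^{-2}$ coefficient is a nonconstant function on $S^2$: off-center evaluation of the Schwarzschild potential $\tfrac{2m}{|x|}$ produces a term proportional to $m\,(a\cdot\theta)/\lambda^{2}$, and the leading part of $p_{ij}$ contributes its own term at the same order. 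Imposing that $H_g$ be constant on $M_r$, discarding the nonlinear and higher-mode errors by the graph-function estimates, and projecting the resulting equation onto the $\ell = 1$ spherical harmonics (which span the kernel of the Euclidean Jacobi operator $\Delta_{S^2} + 2$, so that the $\ell = 1$ part of the inhomogeneity must be balanced by the displacement $a_r$) yields a relation of the form $c\, a_r^\alpha = \int_{\{|x| = \lambda_r\}} (\text{expression in } g)\, d\sigma + o(1)$; using the sphericity of the leading term to discard the odd contributions, the right-hand side simplifies, after identifying constants, to $16\pi m\, C_{CS}^\alpha$ with $c = 16\pi m$. Hence $\lim_r a_r = C_{CS}$, and with the previous paragraph $C_{HY} = C_{CS} = C_I$.

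The main obstacle is this last computation. The center of mass is a subleading quantity, one order in $|x|$ below the dominant Schwarzschild behavior, so one must carry the mean curvature expansion on the moving surfaces $M_r$ to that order and verify, uniformly in $r$, that the $\ell \ge 2$ modes of $u_r$, the quadratic and higher error terms in the expansion, and the odd part of $p_{ij}$ all fail to affect the limit; this is exactly where the full SAF hypothesis (the derivative decay $\partial^\alpha p_{ij} = O(|x|^{-2-|\alpha|})$ and the spherical form of the leading term) and the sharp estimates of \cite{HY} are needed. What remains is careful bookkeeping of the numerical constants to confirm that the limiting flux is precisely the Corvino--Schoen expression (\ref{CS}) rather than a fixed multiple of it.
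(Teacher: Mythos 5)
Your overall strategy is the same as the paper's: reduce to showing $C_{CS}=C_{HY}$ via Theorem \ref{CorvinoSchoen}, realize each leaf as a graph over a Euclidean sphere centered at some $a_r$, observe that the $\ell=1$ spherical harmonics span the kernel of the Euclidean Jacobi operator so that the displacement of the center is determined by the $\ell=1$ projection of the subleading part of the mean curvature, and identify the resulting flux with the Corvino--Schoen integral. The one structural difference is that you start from the Huisken--Yau leaves and their a priori roundness estimates, whereas the paper rebuilds the foliation by Ye's implicit-function-theorem method (perturbing the center $p(R,\psi)$ so the inhomogeneity lands in the range of $L=-\Delta_0-2$, then iterating with Schauder estimates) and only afterwards invokes uniqueness of the foliation to conclude that its surfaces are the $M_r$ of \cite{HY}. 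Either route is viable for that part.

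The genuine gap is in the step you dismiss as ``careful bookkeeping of the numerical constants.'' That step is the technical heart of the proof (Lemma \ref{Tech} in the paper), and it is not constant-chasing. The subleading part of the mean curvature of $S_R(p)$ contains, among others, the cubic term
$$
\tfrac{1}{2}\, p_{ij,k}\,\frac{(y^i-p^i)(y^j-p^j)(y^k-p^k)}{|y-p|^3},
$$
whose integral against $(y^\alpha-p^\alpha)$ does not match any term of the Corvino--Schoen expression (\ref{CS}) pointwise. To convert it one needs an integration-by-parts identity $\mathcal{I}(R)=\mathcal{J}(R)$ relating this surface integral to surface integrals of $p_{ij,i}$, $p_{ii,j}$, and $p_{ij}$; the divergence theorem on an annulus only gives $\mathcal{I}(R_1)-\mathcal{I}(R)=\mathcal{J}(R_1)-\mathcal{J}(R)$, and one must still show that the ``constant of integration'' $\lim_{R_1\to\infty}\bigl(\mathcal{I}(R_1)-\mathcal{J}(R_1)\bigr)$ vanishes. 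Under the SAF hypothesis alone $p_{ij}=O(|x|^{-2})$ with no parity control, so both $\mathcal{I}(R_1)$ and $\mathcal{J}(R_1)$ are individually only $O(1)$ and their difference has no reason to tend to zero by decay counting; the paper establishes the vanishing by approximating $g$ by metrics with harmonic asymptotics via the Density Theorem (Theorem \ref{DenThm2}) and Lemma \ref{Conv}, for which $\bar p_{ij}^{\,odd}=c\cdot y/|y|^3+O(|y|^{-3})$ makes the limit computable. Your proposal contains no mechanism for this; ``using the sphericity of the leading term to discard the odd contributions'' does not address it, since the problematic term comes from $p_{ij}$, not from the Schwarzschild part. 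Without this identity the flux you obtain from the $\ell=1$ projection is not recognizably $16\pi m\,C_{CS}^\alpha$, and the argument does not close. (A smaller caveat: your claim that Theorem \ref{CorvinoSchoen} applies ``because it invokes only the asymptotic conditions on $g$'' is too quick, since its proof runs through the Density Theorem, which is a statement about the full vacuum data $(g,\pi)$; but the paper makes the same reduction, so this is not the decisive issue.)
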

We would like to make a note that Corvino and Wu \cite{CW} recently have a result about the equivalence of these definitions under the assumption that the metric $g$ is conformally flat at infinity with vanishing scalar curvature. In that special case, they are able to derive some explicit estimates. However, it seems that their approach cannot be generalized to AF-RT metrics.

This article is organized as follows. In section 3, we prove a density theorem (theorem \ref{DenThm2}) for AF-RT manifolds. The theorem is crucial for most of the arguments in the paper and may be of independent interest. In section 4, we discuss natural properties of the intrinsic definition. Theorem \ref{CorvinoSchoen} and Theorem \ref{HuiskenYau} are proved in sections 5 and 6, respectively.



\section{The Density Theorem}

Let $ ( M, g, K ) $ be a vacuum initial data set. We introduce the momentum tensor 
$$
		\pi^{ ij } = K^{ ij } - Tr_g ( K ) g^{ij}.
$$
The constraint equations (\ref{ConE}) then take the form 
\begin{align} \label{PiCE}
		R_g + \frac{1}{2} ( Tr_g \pi )^2 - | \pi |_g^2 = 0, \notag&&\\
		\mbox{div}_g ( \pi )  = 0, &&
\end{align}
and we define 
$$ 
		\Phi(g, \pi) = \left(R_g + \frac{1}{2} ( Tr_g \pi )^2 - | \pi |_g^2\,,\, \mbox{div}_g ( \pi ) \right).
$$
In the case  that $ ( M, g, \pi ) $ is AF-RT, Corvino and Schoen \cite{CS} prove that AF manifolds can be approximated by AF-RT manifolds in some weighted Sobolev space. Moreover, instead of requiring smooth solutions to the constraint equations (\ref{PiCE}), their theorem works for solutions with weak regularity. Before we state the theorem, we need the following definitions.
\begin{defi}[Linear and Angular Momentum]
The linear momentum $(P_1, P_2, P_3)$ and the angular momentum $(J_1, J_2, J_3)$ are defined as follows. 
\begin{align*}
		&P_i = \frac{1}{8 \pi } \lim_{r \rightarrow \infty} \int_{|x| = r} \pi_{ij} \nu_g^j \, d \sigma_g,&\\
		&J_{ \alpha} = \frac{1}{8 \pi } \lim_{r \rightarrow \infty} \int_{ |x| = r} \pi_{jk} Z_{(\alpha)}^j \nu_g^k \, d \sigma_g,&
\end{align*}
where $Z_{(\alpha)}$ give the rotation fields in $\mathbb{R}^3$, for example $Z_{(1)} = x^2 \frac{ \partial }{ \partial x^3} - x^3 \frac{ \partial }{ \partial x^2}.$ 
\end{defi}
\begin{rmk}
The linear momentum is well-defined for AF manifolds and the angular momentum is well-defined for AF-RT manifolds.
\end{rmk}
\begin{defi}[Weighted Sobolev Spaces]
For a non-negative integer $k$, a non-negative real number $ p$, and a real number $\delta$, we say $ f \in W^{k,p}_{-\delta} (M) $ if 
$$
		\| f\|_{W^{k,p}_{-\delta} (M)} \equiv \left( \int_M \sum_{|\alpha| \le k} \left( \big| D^{\alpha} f \big| \rho ^{ | \alpha| + \delta }\right)^p \rho^{-3} \, d \sigma_g  \right)^{\frac{1}{p}} < \infty,
$$
where $\alpha$ is a multi-index and $\rho$ is a continuous function with $\rho = |x|$ on $M\setminus B_{R_0}$. \\
When $p = \infty$, 
$$
		\| f\|_{W^{k,\infty}_{-\delta} (M)} = \sum_{|\alpha | \le k} ess \sup_{M } | D^{\alpha} f | \rho^{ |\alpha| + \delta}.
$$
\end{defi}
\begin{defi}[Harmonic Asymptotics] \label{HA}
$(M, g, \pi)$ is said to have harmonic asymptotics if $(M,g,\pi)$ is AF and
\begin{align}
		g = u^4 \delta,  \quad \pi = u^2 ( \mathcal{L}_{\delta} X ) \label{Approx}
\end{align}
outside a compact set for some $u, X$ tending to $1, 0$ respectively, where for any metric $g$, $\mathcal{L}_g X$ is  the operator associated to the Lie derivative $L_{X} g $ defined by 
$$ 
		\mathcal{L}_g X \equiv L_{X} g - \mbox{div}_{g} ( X)  g .
$$ 
\end{defi}
By the constraint equations (\ref{PiCE}), $u$ and $X$ in definition \ref{HA} satisfy the following equations outside the compact set,
\begin{align*}
		& 8 \Delta_{\delta} u = \Big( - | \mathcal{L}_{\delta} X |^2 + \frac{1}{2} \big( Tr_{\delta} ( \mathcal{L}_{\delta} X )\big)^2 \Big) u,& \\
		& \Delta_{\delta} X^i + 4 u^{-1} u_j ( \mathcal{L}_{ \delta } X )^i_{ j } - 2 u^{-1} u^i Tr_{ \delta }( \mathcal{ L }_{ \delta } X ) = 0,&  
\end{align*} 
where $u_i= u^i = \frac{\partial u}{ \partial x_i}$. Asymptotic flatness requires $u, X$ tend to $1, 0$, respectively, at some decay rate. Using the decay conditions on $u$ and $ X$, we have $ \Delta_{\delta} u  = O( |x|^{ -2 - 2 \delta}) $ and $ \Delta_{\delta} X_i  = O( |x|^{ -2 -2\delta} )$.  As shown in \cite{Bartnik}, the asymptotic behavior implies that outside a compact set, 
\begin{eqnarray}
		 u = 1 + \frac{ a }{ |x | }+ O( |x|^{- 1 - \delta } ), \quad X^i = \frac{ b^i }{ | x | } + O( |x|^{ -1 - \delta}),  \label{Asym1}
\end{eqnarray}
for some constants $a$ and $b^i$. Note that this clearly implies that $( M, g , \pi )$ is AF-RT. Furthermore, if $(M, g,\pi)$ is AF-RT, $u^{odd}$ and $ (X^i)^{odd}$ satisfy equations with better decay: $ \Delta_{ \delta } u^{ odd } = O( | x |^{ - 4 - 2\delta } ) $ and $\Delta_{\delta} ( X^i )^{odd} = O( | x |^{ - 4 - 2\delta } ) $. Hence 
\begin{align}
		&u^{odd} (x) = \frac{ c \cdot x }{ |x|^3 }+ O\big( | x |^{ -2 - \delta } \big),  \quad &&(X^i)^{odd} ( x ) = \frac{ d_{(i)} \cdot x }{ | x |^3 } + O\big( | x |^{ -2 - \delta } \big), \label{Asym2} \notag \\
		&\left( u^{odd} (x) - \frac{ c \cdot x }{ |x|^3 }\right)_{,k} = O(|x|^{-3-\delta}), \quad&&\left( (X^i)^{odd} ( x ) - \frac{ d_{(i)} \cdot x }{ | x |^3 } \right)_{,k} = O(|x|^{-3-\delta}),
\end{align}
for some vectors $c, d_{(i)}$ which are quantities corresponding to the center of mass and angular momentum of $( \bar{g}, \bar{ \pi })$. Since we assume that $g$ and $K$ satisfy the pointwise regularity, the above identities hold pointwisely. However, we can generalize the above discussions to the setting of the weighted Sobolev spaces and have more general results as follows.

\begin{thm}\cite[Theorem 1]{CS}\label{DenThm}
Let $ ( g_{ij} - \delta_{ij} , \pi_{ij} ) \in W^{3,p}_{-\delta} ( M )  \times W^{1,p}_{-1 -\delta} ( M ) $ be a vacuum initial data set, where $\delta \in ( \frac{1}{2} , 1 ) $  and $ p > \frac{3}{2} $. Given any $ \epsilon > 0 $, there exist $k_0 > 0$ and a sequence of solutions $ ( \bar{ g }_k , \bar{ \pi }_k ) $ with harmonic asymptotics satisfying (\ref{Approx}), (\ref{Asym1}) so that
$$
		\| g - \bar{g}_k \|_{W^{3,p}_{-\delta} ( M ) } \le \epsilon , \quad \| \pi - \bar{\pi}_k \|_{W^{1,p}_{-1-\delta} ( M ) } \le \epsilon, \quad \mbox{ for } k \ge k_0.
$$
Moreover, the mass and the linear momentum of $ ( \bar{ g }_k, \bar{ \pi }_k)$ are within $ \epsilon $ of those of $ ( g, \pi )$. 
\end{thm}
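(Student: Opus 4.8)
The plan is to deform the given (weakly regular) vacuum data to nearby data with harmonic asymptotics in two stages: a truncation that makes the data exactly Euclidean near infinity, followed by a solution of the constraint equations (\ref{PiCE}) via a conformal/York‑type ansatz and a fixed‑point argument in the weighted spaces. \textbf{Step 1 (truncation).} Fix a large radius $\sigma$ and a cutoff $\chi_\sigma$ with $\chi_\sigma\equiv 1$ on $B_\sigma$ and $\chi_\sigma\equiv 0$ off $B_{2\sigma}$, and set $g_\sigma=\delta+\chi_\sigma(g-\delta)$, $\pi_\sigma=\chi_\sigma\pi$, so that $g_\sigma$ is flat outside $B_{2\sigma}$ and equals $g$ on $B_\sigma$. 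Since $(g-\delta,\pi)\in W^{3,p}_{-\delta}\times W^{1,p}_{-1-\delta}$, the tails of these norms beyond $B_\sigma$ tend to $0$, and multiplying by $\chi_\sigma$ or its derivatives only costs factors $\sigma^{-1}\lesssim|x|^{-1}$ on the annulus; hence $(g_\sigma,\pi_\sigma)\to(g,\pi)$ in $W^{3,p}_{-\delta}\times W^{1,p}_{-1-\delta}$, and, using $\Phi(g,\pi)=0$, also $\Phi(g_\sigma,\pi_\sigma)\to 0$ in the appropriate weighted target space $W^{1,p}_{-2-\delta}\times W^{0,p}_{-2-\delta}$ (the defect is supported in the annulus and equals a bounded second‑order operator applied to $(\chi_\sigma-1)(g-\delta)$ plus quadratic terms; this is where the weighted Sobolev embeddings, hence $p>\tfrac32$, are used).

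\textbf{Step 2 (solving the constraints).} Seek the desired solution in the form $\bar g=u^4 g_\sigma$, $\bar\pi=u^2\big(\pi_\sigma+\mathcal L_{g_\sigma}X\big)$ with $(u,X)\to(1,0)$; outside $B_{2\sigma}$ this is exactly the harmonic‑asymptotics ansatz (\ref{Approx}). Writing $u=1+v$ and expanding, $\Phi(\bar g,\bar\pi)=\Phi(g_\sigma,\pi_\sigma)+L_\sigma(v,X)+Q_\sigma(v,X)$, where $L_\sigma$ is the linearization at $(1,0)$ — a coupled system whose principal part is $\big(8\Delta_{g_\sigma}v,\ \Delta_{g_\sigma}X\big)$, the ansatz being chosen precisely to render $L_\sigma$ elliptic — and $Q_\sigma$ collects the higher‑order terms. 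By the weighted elliptic theory on asymptotically flat manifolds, the flat model $(8\Delta_\delta,\Delta_\delta)$ is an isomorphism onto $W^{1,p}_{-2-\delta}\times W^{1,p}_{-2-\delta}$ for $\delta\in(0,1)$ and $p>\tfrac32$ (the weight avoids the exceptional values $0,-1$), and since $(g_\sigma,\pi_\sigma)\to(g,\pi)$ the operators $L_\sigma$ converge in operator norm to an isomorphism $L_g$, so $L_\sigma$ is invertible with uniformly bounded inverse for $\sigma$ large. Then $\Phi(\bar g,\bar\pi)=0$ is equivalent to $(v,X)=-L_\sigma^{-1}\big(\Phi(g_\sigma,\pi_\sigma)+Q_\sigma(v,X)\big)$, which is a contraction on a small ball because $\|\Phi(g_\sigma,\pi_\sigma)\|\to 0$ and $Q_\sigma$ is uniformly quadratic; the fixed point $(v_\sigma,X_\sigma)$ satisfies $\|(v_\sigma,X_\sigma)\|_{W^{3,p}_{-\delta}}\to 0$. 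Setting $\bar g_\sigma=u_\sigma^4 g_\sigma$, $\bar\pi_\sigma=u_\sigma^2(\pi_\sigma+\mathcal L_{g_\sigma}X_\sigma)$ gives $\|\bar g_\sigma-g\|_{W^{3,p}_{-\delta}}\le\|\bar g_\sigma-g_\sigma\|+\|g_\sigma-g\|\to 0$ and likewise for $\bar\pi_\sigma$, while the expansion theory of \cite{Bartnik} applied to the elliptic equations satisfied by $u_\sigma,X_\sigma$ yields (\ref{Asym1}). Relabel a subsequence as $(\bar g_k,\bar\pi_k)$.

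\textbf{Step 3 (mass and linear momentum).} Represent the ADM mass and linear momentum of a solution $(\bar g,\bar\pi)$ as interior integrals: $16\pi\, m(\bar g)=\int_M\big(|\bar\pi|_{\bar g}^2-\tfrac12(\mathrm{Tr}_{\bar g}\bar\pi)^2-Q(\bar h,\partial\bar h)\big)$ and $8\pi\,P_i(\bar\pi)=-\int_M(\Gamma_{\bar g}\ast\bar\pi)_i$, where $\bar h=\bar g-\delta$ and $Q$ is quadratic in $\bar h$ and its derivatives; the integrands decay like $|x|^{-2-2\delta}$, which is integrable over $\mathbb R^3$ precisely because $\delta>\tfrac12$. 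Since $(\bar g_k,\bar\pi_k)\to(g,\pi)$ in $W^{3,p}_{-\delta}\times W^{1,p}_{-1-\delta}$, these integrals converge (Hölder plus the embeddings), so $m(\bar g_k)\to m(g)$ and $P_i(\bar\pi_k)\to P_i(\pi)$; in particular they are within $\epsilon$ for $k$ large, which completes the proof.

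\textbf{Main obstacle.} The delicate point is Step 2: making the weighted elliptic estimates and the implicit function theorem uniform as the truncation radius $\sigma\to\infty$ — the background $(g_\sigma,\pi_\sigma)$ varies with $\sigma$ — and, more structurally, ensuring that the relevant linearized operator is an isomorphism onto the weighted target (no cokernel obstruction). It is exactly the conformal/York ansatz that arranges this, reducing matters to Laplace‑type operators governed by the theory of \cite{Bartnik}. The second delicate point is the matching of the weight $\delta\in(\tfrac12,1)$ and exponent $p>\tfrac32$ with the decay of the data: it is needed both for the truncation defect to tend to zero in the correct norm (Step 1) and for the mass and momentum integrals to converge (Step 3).
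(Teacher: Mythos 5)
Your Steps 1 and 3 follow the same lines as the paper's (i.e.\ Corvino--Schoen's) argument: truncate the data to be exactly flat outside a large ball, make the conformal/York ansatz, and recover convergence of the mass and linear momentum by converting surface integrals into exterior integrals controlled by the weighted norms (this is where $\delta>\tfrac12$ enters, as you say). The gap is in Step 2, at exactly the point you flag as the ``main obstacle'' and then dismiss. You assert that the linearization $L_\sigma$ of $(u,X)\mapsto\Phi\bigl(u^4 g_\sigma,\,u^2(\pi_\sigma+\mathcal L_{g_\sigma}X)\bigr)$ at $(1,0)$ converges to ``an isomorphism $L_g$'' and hence is invertible with uniformly bounded inverse. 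But $L_g$ is only known to be Fredholm of index $0$ on these weighted spaces: its principal part is Laplace-type, but it carries first- and zeroth-order coefficients coming from $R(g)$, $\pi$, and $\mathcal L X$, and neither injectivity nor surjectivity is clear. The paper says this explicitly (``it is not clear whether $DT_{(1,0)}$ is surjective''). The conformal ansatz controls the principal symbol and the Fredholm index; it does not kill the cokernel, so your fixed-point equation $(v,X)=-L_\sigma^{-1}\bigl(\Phi(g_\sigma,\pi_\sigma)+Q_\sigma(v,X)\bigr)$ is not well posed as written.

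Corvino and Schoen close this by enlarging the domain: they consider data of the form $\bigl(u^4\hat g+h,\ u^2(\hat\pi+\mathcal L X)+q\bigr)$ with $(h,q)$ compactly supported symmetric $2$-tensors, use the surjectivity of the full linearized constraint operator $D\Phi_{(\hat g,\hat\pi)}$ onto $W^{1,p}_{-2-\delta}(M)\times W^{0,p}_{-2-\delta}(M)$ to pick finitely many $(\widetilde h_i,\widetilde q_i)$ whose images span a complement of $\mathrm{Range}\,DT_{(1,0)}$, and then apply the inverse function theorem to the enlarged map $\overline T\bigl((u,X),(h,q)\bigr)$, whose differential \emph{is} an isomorphism by construction. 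This also changes the form of the approximating data to $\bar g_k=u_k^4\hat g_k+h_k$, $\bar\pi_k=u_k^2(\hat\pi_k+\mathcal L X_k)+q_k$, which still has harmonic asymptotics precisely because $h_k,q_k$ have compact support. Without this finite-dimensional correction (or an actual proof that $DT_{(1,0)}$ has trivial kernel), Step 2 does not close.
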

The theorem says that the solutions with harmonic asymptotics (\ref{Approx}) are dense among general solutions. More remarkably, the mass and the linear momentum which can be explicitly expressed for solutions with harmonic asymptotics converge to the original initial data set in these weighted Sobolev spaces. However, in the above theorem the center of mass does not seem to converge, neither is the center of mass $C_I$ defined generally for AF manifolds.  Therefore, we would like to modify their theorem and prove, in some weighted Sobolev space, solutions with harmonic asymptotics form a dense subset inside AF-RT solutions so that the centers of mass and the angular momentum converge. The precise statement is as follows:

\begin{thm}[Density Theorem]{\label{DenThm2}}
Let $ ( g - \delta , \pi ) \in W^{ 3,p }_{ - \delta} (M) \times W^{ 1,p }_{ -1- \delta} (M) $ be a vacuum initial data set and  $ ( g^{odd} , \pi^{ even }) \in W^{ 3,p }_{ -1 - \delta} ( M \setminus B_{ R_0 })  \times W^{1,p }_{ -2- \delta} ( M \setminus B_{ R_0 }) $, where $\delta \in ( \frac{1}{2} , 1 ) $  and $ p>3 $. Given any $ \epsilon > 0 $ and $\delta_0 \in ( 0, \delta)$, there exist $R $, $k_0 = k_0(R)$, and a sequence of solutions $ ( \bar{ g }_k , \bar{ \pi }_k ) $ with harmonic asymptotics satisfying (\ref{Approx}), (\ref{Asym1}), (\ref{Asym2}) so that $ ( \bar{ g }_k , \bar{ \pi }_k ) $  is within an $\epsilon$-neighborhood of $ ( g, \pi ) $ in the $W^{ 3,p }_{ - \delta} (M) \times W^{ 1,p }_{ -1 - \delta} (M) $ norm and
$$
		 \| \bar{g}_k^{odd} \|_{ W_{ -1 - \delta_0 }^{3,p} ( M \setminus B_R ) } \le \epsilon, \quad \| \bar{ \pi }_k^{even} \|_{  W_{ -2 - \delta_0 }^{1,p} ( M \setminus B_R ) } \le \epsilon, \quad \mbox{ for } k \ge k_0.
$$
Moreover, the mass, the linear momentum, the center of mass, and the angular momentum of $ ( \bar{ g }_k, \bar{ \pi }_k)$ are within $ \epsilon $ of those of $ ( g, \pi )$.
\end{thm}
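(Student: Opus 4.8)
The plan is to carry the even/odd decomposition through the cut-off-and-correct scheme that Corvino and Schoen use to prove Theorem \ref{DenThm}. Fix a large radius $R$ and a \emph{radial} cut-off $\chi_R$ equal to $1$ on $B_R$ and to $0$ outside $B_{2R}$, and set $g_R = \delta + \chi_R h$, $\pi_R = \chi_R \pi$. Since $(g,\pi)$ solves the constraints on $B_R$ while $(g_R,\pi_R)=(\delta,0)$ outside $B_{2R}$, the defect $\Phi(g_R,\pi_R)$ is supported in the annulus $A_R = B_{2R}\setminus B_R$, and a direct estimate — using that $\chi_R,\partial\chi_R,\partial^2\chi_R$ are $O(1),O(R^{-1}),O(R^{-2})$ on $A_R$ and that $\|h\|_{W^{3,p}_{-\delta}(M\setminus B_R)}$, $\|\pi\|_{W^{1,p}_{-1-\delta}(M\setminus B_R)}$ are small — shows $\|\Phi(g_R,\pi_R)\|_{W^{1,p}_{-2-\delta}(M)\times W^{0,p}_{-2-\delta}(M)}$ can be made as small as we like by taking $R$ large. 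The point of a radial, hence even, cut-off is that $g_R^{odd} = \chi_R\, g^{odd}$ and $\pi_R^{even} = \chi_R\, \pi^{even}$ on $M\setminus B_{R_0}$, so the truncation preserves the hypothesized decay of the odd/even parts, with the same smallness over $M\setminus B_R$.

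Next I would correct $(g_R,\pi_R)$ to an exact solution by the harmonic-asymptotics ansatz $\bar g = u^4 g_R$, $\bar\pi = u^2(\pi_R + \mathcal L_{g_R}X)$ with $(u,X)$ near $(1,0)$. As in \cite{CS}, the map $(u,X)\mapsto \Phi(\bar g,\bar\pi)$ has at $(1,0)$ a linearization with a bounded right inverse on the relevant weighted spaces, so a contraction-mapping argument yields $(u,X)$ with $\|u-1\|_{W^{3,p}_{-\delta}(M)} + \|X\|_{W^{3,p}_{-\delta}(M)} \lesssim \|\Phi(g_R,\pi_R)\|$, and outside $B_{2R}$, where $g_R = \delta$, the solution $\bar g = u^4\delta$, $\bar\pi = u^2\mathcal L_\delta X$ has harmonic asymptotics and the expansion (\ref{Asym1}). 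The new ingredient is the odd sector. Taking odd parts in the elliptic equations for $(u,X)$, one sees that $u^{odd}$ and $(X^i)^{odd}$ solve Laplace-type equations whose right-hand sides are finite sums of products in which at least one factor is one of the small quantities $u-1$, $X$, $g_R^{odd}$, $\pi_R^{even}$ or a derivative thereof — a product of two large \emph{even} quantities being again even, and hence invisible to the odd part. With the decay from the first paragraph these right-hand sides lie, for any $\delta_0\in(0,\delta)$, in $W^{1,p}_{-3-\delta_0}(M\setminus B_R)$ with small norm (controlled by the defect above). Since the weight $-1-\delta_0$ is not exceptional for the flat Laplacian, the odd equations can be solved in $W^{3,p}_{-1-\delta_0}(M\setminus B_R)$ with a correspondingly small bound; and since $\bar g^{odd} = \chi_R\, g^{odd} + 4 u^{odd}\delta + (\text{lower order})$ while $\bar\pi^{even}$ is, to leading order, built from $\chi_R\pi^{even}$ and $(X^i)^{odd}$, this gives the asserted bounds on $\|\bar g_k^{odd}\|_{W^{3,p}_{-1-\delta_0}(M\setminus B_R)}$ and $\|\bar\pi_k^{even}\|_{W^{1,p}_{-2-\delta_0}(M\setminus B_R)}$ and upgrades (\ref{Asym1}) to (\ref{Asym2}), with vectors $c_k$, $d_k^{(i)}$.

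For the conserved quantities, the convergence of the mass and the linear momentum is exactly Theorem \ref{DenThm}, and it reflects the fact that $a_k$, $b_k^i$ in (\ref{Asym1}) are, up to universal constants, the monopole moments of $\Phi(g_R,\pi_R)$, which by the divergence theorem equal the ADM integrals of $(g,\pi)$ over $\{|x|=R\}$ and hence tend to the ADM mass and the linear momentum of $(g,\pi)$. Running the same argument on $\Phi(g_R,\pi_R)^{odd}$ shows that $c_k$ and $d_k^{(i)}$ in (\ref{Asym2}) are the corresponding dipole-type moments, converging to the quantities that enter $C_{CS}$ and the angular momentum $J_\alpha$; here the hypothesis $p>3$ is used through the Sobolev embedding $W^{1,p}\hookrightarrow C^0$ in dimension three, which converts the weighted norm bounds into the pointwise expansions (\ref{Asym1}), (\ref{Asym2}). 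Finally, the large \emph{even} part of the metric at order $|x|^{-2}$ does not affect $C_I$: in (\ref{CTM}) the conformal Killing field $Y_{(\alpha)}$ is even while $\nu_g$ is odd, so that contribution integrates to zero on large coordinate spheres. Hence $C_I(\bar g_k,\bar\pi_k)$ is governed by $c_k$ and converges to $C_I(g,\pi)$, and likewise $J_\alpha(\bar g_k,\bar\pi_k)\to J_\alpha(g,\pi)$.

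The main obstacle is the odd sector of the second paragraph. One must close the nonlinear iteration in a single Banach space recording simultaneously the standard decay of $(u-1,X)$ and the improved decay of $(u^{odd},(X^i)^{odd})$, which forces a careful audit that no product of two non-small even quantities — and no term produced by differentiating $\chi_R$ — ever feeds a slowly decaying contribution into the odd equations, and that the truncation error in the odd sector is genuinely of order $|x|^{-3-\delta}$ rather than $|x|^{-3}$. This is also where the loss from $\delta$ to an arbitrary $\delta_0 < \delta$ is forced: the approximating solutions still carry the original odd part $g^{odd}\in W^{3,p}_{-1-\delta}$ across the gluing annulus, so the improved estimate can only be stated with a weight strictly larger than $-1-\delta$, and this same margin is what keeps the relevant weights non-exceptional when estimating the quadratic remainder.
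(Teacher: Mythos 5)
Your overall strategy is the paper's: cut off, correct by the harmonic--asymptotics ansatz, estimate the odd parts of $(u,X)$ by elliptic theory, and transfer the estimates to the conserved quantities. But two steps, as you have written them, do not go through. First, the assertion that the linearization of $(u,X)\mapsto \Phi\bigl(u^4 g_R, u^2(\pi_R+\mathcal L_{g_R}X)\bigr)$ at $(1,0)$ has a bounded right inverse is precisely what Corvino and Schoen could \emph{not} establish: the operator is Fredholm of index $0$ but its surjectivity is unknown, and they circumvent this by enlarging the domain with compactly supported symmetric tensors $(h,q)$, so that the exact solution is $\bar g = u^4\hat g + h$, $\bar\pi = u^2(\hat\pi+\mathcal L X)+q$. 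Once those tensors are present, your odd estimates at infinity are only safe if the supports of $(h_k,q_k)$ stay in a fixed compact set as $k\to\infty$; this is not automatic (the cokernel of the linearized operator varies with $k$) and requires a separate argument, which the paper supplies via a lemma on complementing subspaces for a convergent sequence of Fredholm operators.

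Second, in the odd sector you write that ``the odd equations can be solved in $W^{3,p}_{-1-\delta_0}(M\setminus B_R)$ with a correspondingly small bound.'' But $u^{odd}$ and $(X^i)^{odd}$ are already determined functions on an exterior domain; producing \emph{some} decaying solution of the same equation says nothing about them. On $M\setminus B_R$ the decay of a given solution is not dictated by the equation and the right-hand side alone --- the paper's example of $1/|x|$ versus $c\cdot x/|x|^3$ makes exactly this point --- so what is needed is an a priori estimate in which the only input near the transition region is the smallness of $\|u^{odd}\|$ and $\|X^{odd}\|$ in the \emph{original} weight $-\delta$; this is the content of the paper's Lemma \ref{Key} (multiply by a cutoff supported outside $B_R$, apply the global isomorphism for $\Delta_\delta$, absorb the perturbation, and bound the annulus terms by $R\,\|v_k^{odd}\|_{W^{s,p}_{-a}}$). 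Relatedly, your claim that the right-hand sides already lie in $W^{1,p}_{-3-\delta_0}$ is false at the first pass: the quadratic terms such as $|\hat\pi+\mathcal L X|^2$ decay only like $|x|^{-2-2\delta}$, since $X$ itself initially lies only in $W^{2,p}_{-\delta}$, and for $\delta_0$ close to $\delta$ this is strictly worse than $-3-\delta_0$. One must bootstrap --- first improve $u^{odd}, (X^i)^{odd}$ to weight $-2\delta$, feed this back into the quadratic terms, reach weight $-1-\delta$ with a bounded (not small) constant, and only then obtain smallness at the weaker weight $-1-\delta_0$ by the factor $R^{\delta_0-\delta}$. Your closing paragraph correctly senses that the odd sector is the crux, but the mechanism that closes it is the a priori estimate plus bootstrap, not solvability at a non-exceptional weight.
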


We first briefly describe Corvino and Schoen's construction of the approximating solutions $( \bar{ g }_k , \bar{ \pi }_k)$. 

\begin{proof}[Sketch of the proof of Theorem \ref{DenThm}]
Let $( \hat{ g }_k, \hat{ \pi }_k)$ be 2-tensors cut off from the original solutions $( g, \pi )$, $ \hat{ g }_k = \xi_k g + ( 1 - \xi_k ) \delta = \delta + \xi_k h$, $\hat{ \pi }_k = \xi_k \pi$ where $ \xi_k $ is a smooth cut-off function 
$$
		\xi_k (x) = \left\{\begin{array}{ll} 1 & \mbox{ when } |x| \le k \\
				\mbox{between $0$ and $1$} & \mbox{ when }  k \le |x| \le 2 k\\
 				0 & \mbox{ when } |x| \ge 2 k.
		\end{array} \right. 
$$
$\xi_k$  is chosen so that $ | D \xi_k | \le c/ k $ and $ | D^2 \xi_k | \le c/k^2$ for some constant $c$ independent of $k$. Then we let 
\begin{align*} 
		&\bar{ g }_k = u_k^4 \hat{ g }_k,\\
		& \bar{ \pi }_k = u_k^2 ( \hat{ \pi }_k + \mathcal{ L }_{\hat{ g }_k} X_k ).
\end{align*}
To simplify notation, we denote $\mathcal{L}_{\hat{g}_k}$ by $\mathcal{L} $, and we also drop the subindex $k$ when it is clear from context. In order to find $ u $ near $ 1 $ and $ X $ near $ 0 $ at infinity so that $ ( \bar{ g }, \bar{ \pi } ) $ is a vacuum initial data set, we need to solve the following systems for $u$ and $X$ from the constraint equations (\ref{PiCE})
\begin{eqnarray}\label{CE}
		&& \bar{ \mu } \equiv u^{ -5 } \bigg( -8 \Delta_{ \hat {g} } u + \Big( R( \hat{g} ) - | \hat{ \pi } + \mathcal{L}X |_{ \hat{ g } }^2 + \frac{1}{2} \big(Tr_{ \hat{ g } } ( \hat{ \pi } + \mathcal{ L } X  ) \big)^2 \Big) u\bigg) = 0\notag \\
		&& \big( \mbox{div}_{ \bar{ g } } ( \bar{ \pi } ) \big)_i = u^{-2} \bigg( \big( \mbox{div}_{ \hat{ g } } ( \hat{ \pi } + \mathcal{L} X )\big)_i + 4 u^{-1} \hat{ g }^{jk} u_j ( \hat{ \pi } + \mathcal{ L} X )_{ik} \notag\\
		&&\hskip 100pt - 2 u^{-1} u_i Tr_{ \hat{ g } } ( \hat{ \pi } + \mathcal{L} X )\bigg) = 0, \quad i = 1,2,3.  
\end{eqnarray}
Consider the map $T : (1,0) + W^{3,p}_{ -\delta} ( M )  \times W^{2,p}_{ -\delta} ( M )  \rightarrow W^{1,p}_{-2 -\delta} ( M )  \times W^{0,p}_{-2 -\delta} ( M ) $ defined by $ T( u , X ) = \big(\bar{ \mu }, \mbox{div}_{ \bar{ g } } ( \bar{ \pi } ) \big)$. It is known that $ DT_{ ( 1,0 )} $ is a Fredholm operator of index $0$. For the Fredholm operator of index $0$, the operator is injective if and only it is surjective. However, it is not clear whether $DT_{ (1,0)}$ is surjective. Corvino and Schoen enlarge the domain and utilize initial data sets of the form $ \big( u^4 \hat{ g } + h, u^2 ( \hat{ \pi } + \mathcal{ L } X) + q \big) $ with
$$
		\Phi \left( u^4 \hat{ g } + h, u^2 ( \hat{ \pi } + \mathcal{ L } X) + q \right) = ( 0 , 0 ), 
$$
where $ h $ and $ q $ are symmetric $ ( 0 , 2 )$-tensors with compact supports. Then they prove that the operator $D \Phi_{ ( \hat{g}, \hat{ \pi } ) }$ maps surjectively onto $ W^{1,p}_{-2 -\delta} ( M )  \times W^{0,p}_{-2 -\delta} ( M ) $ for $ p > 1 $ and $ \delta \in ( 0, 1 )$. 

Since $DT_{( 1, 0 )}$ is Fredholm of index $0$, we have 
\begin{eqnarray*}
		&& W^{3,p}_{-\delta} ( M )  \times W^{2,p}_{ -\delta} ( M )  = \mbox{Ker} \left( DT_{ ( 1 , 0 ) } \right) \oplus W_1,\\
		&& W^{1,p}_{-2 -\delta} ( M )  \times W^{0,p}_{-2 -\delta} ( M )  = \mbox{Range} \left( DT_{ ( 1 , 0 ) } \right) \oplus \mbox{span}\{ V_1, \dots, V_N\}
\end{eqnarray*}
where $ W_1 $ is an $N$-dimensional linear subspace and $\{ V_1, \cdots, V_N \}$ is a basis for the cokernel of $DT_{ (1,0) }$. Because $D \Phi_{ ( \hat{g}, \hat{ \pi } ) }$ is surjective, we can choose $\big\{ (h_1, q_1), \dots ,( h_N, q_N ) \big\}$ so that $D\Phi_{ ( \hat{ g }, \hat{ \pi } ) } ( h_i , q _i ) = V_i$. Supports of those $( h_i, q_i )$ may not be compact, but there exist $( \widetilde{ h}_i,  \widetilde{ q}_i )$ with compact supports close enough to $( h_i, q_i )$ so that $ \widetilde{V}_i= D \Phi_{ ( \hat{ g }, \hat{ \pi } ) }  ( \widetilde{ h}_i,  \widetilde{ q}_i ) $ still span a complementing subspace for $ \mbox{Range} \left( DT_{ ( 1 , 0 ) } \right)  $. 

Let $W_2 = \mbox{ span} \big\{ ( \widetilde{ h}_1,  \widetilde{ q}_1 ), \dots, ( \widetilde{ h}_N,  \widetilde{ q}_N ) \big\}$. $ W = W_1 \times W_2 $ is a Banach space inside $ W^{3,p}_{-\delta} ( M ) \times W^{2,p}_{ -\delta} ( M )  \times W^{3,p}_{ -\delta} ( M )  \times W^{1,p}_{-1-\delta} ( M )  $. Define the map $ \overline{T} $ from $ \big( (1,0), (0,0) \big) + W_1\times W_2 $ to $ W^{1,p}_{-2 -\delta} ( M )  \times W^{0,p}_{-2 -\delta} ( M )  $ by 
$$
		\overline{T}\big( (u,X) , ( h,q) \big) = \Phi( u^4 \hat{ g } + h , u^2( \hat{ \pi } + \mathcal{L} X) + q  ).
$$
$ D\overline{T}_{\left( (1 , 0 ), ( 0, 0 )\right)} $ is an isomorphism by construction. Hence the inverse function theorem asserts that $ \overline{ T}$ is an isomorphism from a fixed (independent of $k$) neighborhood of $ \left( (1,0) , ( 0 , 0 ) \right)$ to a fixed neighborhood of $\Phi( \hat{g}, \hat{ \pi })$. Because $(0,0)$ is contained in the image when $k$ large, there exists a unique $\left( ( u, X ), (h, q) \right)$ within that fixed neighborhood of $ \big( (1 , 0 ), ( 0, 0 )\big) $ such that $\Phi \left( u^4 \hat{ g } + h, u^2 ( \hat{ \pi } + \mathcal{ L } X) + q \right) = ( 0 , 0 )$ for $k$ large.
\end{proof}

Note that the supports of $h, q$ in the proof may not be uniformly bounded in $k$, but it is important in the proof of Theorem \ref{DenThm2} that $h,q$ have compact supports uniformly bounded in $k$. Therefore, we need to carefully choose the cokernel of $DT_{(1,0)}$ for $k$ large. This choice is described by the following lemma.

\begin{lemma}
$V$ and $W$ are Banach spaces. Assume that $S_k : V \rightarrow W $ is a sequence of Fredholm operators and $ S_k$ converges (in the operator norm) to some Fredholm operator $ S' $. If $ W = {\rm Range } S' \oplus W' $ for some finitely dimensional closed subspace $W'$, we can choose the cokernel of $S_k$ inside the cokernel $W'$ of $S'$ for some $W_k \subset W'$, for $k$ large. 
 
\end{lemma}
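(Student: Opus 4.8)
The plan is to package each $S_k$ together with the fixed finite-dimensional complement $W'$ into a single operator $V_0 \oplus W' \to W$ and exploit the openness of the set of invertible operators, rather than trying to perturb a complement of $\mathrm{Range}(S')$ directly.

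First I would record the standard consequences of $S'$ being Fredholm: $\ker S'$ is finite-dimensional, hence has a closed complement $V_0$ with $V = \ker S' \oplus V_0$, and $\mathrm{Range}(S')$ is closed. Then $S'|_{V_0}\colon V_0 \to \mathrm{Range}(S')$ is a bounded bijection between Banach spaces, hence an isomorphism by the open mapping theorem, and therefore the map $A'\colon V_0 \oplus W' \to W$ given by $A'(v,w) = S'v + w$ is a bounded linear bijection, hence an isomorphism.

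Next, set $A_k\colon V_0 \oplus W' \to W$, $A_k(v,w) = S_k v + w$. Since $\|A_k - A'\| \le \|S_k - S'\| \to 0$ and the invertible bounded operators form an open set (Neumann series), $A_k$ is an isomorphism for all large $k$; in particular it is surjective, which gives $W = S_k(V_0) + W' \subseteq \mathrm{Range}(S_k) + W' \subseteq W$, so $\mathrm{Range}(S_k) + W' = W$. Finally, fixing such a $k$ and using that $W'$ is finite-dimensional, I would choose $W_k \subseteq W'$ with $W' = (W' \cap \mathrm{Range}(S_k)) \oplus W_k$; then $\mathrm{Range}(S_k) + W_k = \mathrm{Range}(S_k) + W' = W$ while $W_k \cap \mathrm{Range}(S_k) = 0$, and since $\mathrm{Range}(S_k)$ is closed ($S_k$ Fredholm) and $W_k$ is finite-dimensional, this algebraic direct sum is a topological one, i.e. $W = \mathrm{Range}(S_k) \oplus W_k$, as required.

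I do not expect a serious obstacle: the argument is essentially openness of invertibility plus finite-dimensional linear algebra. The one point worth care is that one cannot argue by perturbing a complement of $\mathrm{Range}(S')$, since the ranges $\mathrm{Range}(S_k)$ need not be close to $\mathrm{Range}(S')$ in any sense — for instance $\dim \mathrm{coker}\, S_k$ may be strictly smaller than $\dim \mathrm{coker}\, S' = \dim W'$, in which case $W_k$ will be a proper subspace of $W'$ — and bundling $S_k$ with $W'$ into $A_k$ is precisely what sidesteps this.
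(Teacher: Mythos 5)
Your proposal is correct and is essentially the paper's own argument: the paper also forms the operator $\tau_S(v,w)=Sv+w$ on $V'\oplus W'$ (your $A_k$) and invokes openness of invertibility to get $W=S_k(V')\oplus W'$ for large $k$. The only difference is cosmetic and comes at the end — the paper extracts $W_k$ by decomposing $V=V'\oplus\mathrm{Ker}\,S_k\oplus Z_k$ and writing $W'=S_k(Z_k)\oplus W_k$, whereas you take a complement of $W'\cap\mathrm{Range}\,S_k$ in $W'$ directly; these are complements of the same subspace, and your version of the final step is the cleaner one.
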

\begin{proof}
Since $ S'$ is Fredholm, there exists $V'$ such that $V = \mbox{Ker} S' \oplus V'$. Consider for any bounded linear operator $S: V \rightarrow W$, the map
$$ 
		\tau_S : V' \oplus W' \rightarrow W
$$
given by $\tau_S ( v, w ) = S v + w$. Then $\tau_{ S_k} $ is an isomorphism for $ k $ large since $\tau_{ S' }$ is an isomorphism by construction and isomorphism is an open condition in the space of linear operators. We then have 
$$
		W = S_k( V' ) \oplus W'.
$$ 
Therefore, for any $v\in V$,  $ S_k ( v ) $ can be decomposed uniquely into $S_k(v') + S_k ( v) - S_k ( v')$ for some $v' \in V'$  and $ S_k ( v) - S_k ( v') \in W'$. Hence $v$ can be decomposed uniquely into $ v = v' + (v - v')$  as well, where $ S_k(v - v') \in W'$. Let $ U_k \equiv \{ u \in V : S_k(u) \in W'\}$. It is easy to see that $U_k$ is a closed space in $V$ and 
$$ 
		V = V' \oplus U_k.
$$ 
Note that $\mbox{Ker} S_k$ is a finite dimensional subspace in $U_k$, so we can write 
$$
		V = V' \oplus \mbox{Ker} S_k \oplus Z_k
$$ 
for some closed subspace $Z_k \subset U_k$. $ S_k(Z_k) $ is closed in $W'$, so there is $W_k \subset W' $ such that $ S_k(Z_k) \oplus W_k = W' $ and hence 
$$ 
		W = S_k( V' ) \oplus S_k(Z_k ) \oplus W_k = \mbox{Range} S_k \oplus W_k. 
$$ 
\end{proof}

\begin{cor}\label{CptSupp}
The supports of $h$ and $q$ in the proof of Theorem \ref{DenThm} can be chosen to be uniformly bounded for $k$ large. 
\end{cor}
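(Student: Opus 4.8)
The plan is to apply the preceding lemma so that the finite-dimensional subspace complementing $\operatorname{Range}(DT_{(1,0)})$, together with the compactly supported tensors that realize it, can be chosen independently of $k$, and then to rerun the inverse function theorem argument of Theorem~\ref{DenThm} with this $k$-independent data.

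First I would record that the relevant linearizations form a norm-convergent family of Fredholm operators. With $\hat g_k=\delta+\xi_k h$, $\hat\pi_k=\xi_k\pi$ as above, let $S_k$ be the linearization at $(1,0)$ of $(u,X)\mapsto \Phi\big(u^4\hat g_k,\ u^2(\hat\pi_k+\mathcal L_{\hat g_k}X)\big)$, a Fredholm operator of index $0$ from $W^{3,p}_{-\delta}(M)\times W^{2,p}_{-\delta}(M)$ to $W^{1,p}_{-2-\delta}(M)\times W^{0,p}_{-2-\delta}(M)$, and let $S'$ be the same linearization built from the original data $(g,\pi)$. Since $\hat g_k-g=(1-\xi_k)h$ and $\hat\pi_k-\pi=(1-\xi_k)\pi$ are supported in $\{|x|\ge k\}$ and $p>3$, we have $\hat g_k\to g$ in $W^{3,p}_{-\delta}$ and $\hat\pi_k\to\pi$ in $W^{1,p}_{-1-\delta}$; by the Sobolev embedding into the appropriate weighted $C^2$, resp.\ $C^0$, spaces the coefficients of $S_k$ converge to those of $S'$, so $S_k\to S'$ in operator norm.

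Next I would fix the complement once and for all. Choose $k_0$ large; by the surjectivity of $D\Phi_{(\hat g_{k_0},\hat\pi_{k_0})}$ onto $W^{1,p}_{-2-\delta}(M)\times W^{0,p}_{-2-\delta}(M)$ from \cite{CS} --- which, as in the proof of Theorem~\ref{DenThm}, is witnessed by compactly supported perturbations --- pick symmetric $(0,2)$-tensors $(\widetilde h_1,\widetilde q_1),\dots,(\widetilde h_N,\widetilde q_N)$, all supported in $B_{k_0}$, so that $V_i':=D\Phi_{(\hat g_{k_0},\hat\pi_{k_0})}(\widetilde h_i,\widetilde q_i)$ form a basis of a complement $W'$ of $\operatorname{Range}(S')$; this is possible because the image of the compactly supported tensors is dense in the range of the surjective operator and ``forming a basis of a complement'' is an open condition. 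Applying the preceding lemma to $S_k\to S'$ with this $W'$, for $k$ large there is $W_k\subset W'$ with $W^{1,p}_{-2-\delta}(M)\times W^{0,p}_{-2-\delta}(M)=\operatorname{Range}(S_k)\oplus W_k$. The point is that for $k\ge k_0$ the cut-off data agrees with $(\hat g_{k_0},\hat\pi_{k_0})$ on $B_{k_0}$, so $D\Phi_{(\hat g_k,\hat\pi_k)}$ agrees with $D\Phi_{(\hat g_{k_0},\hat\pi_{k_0})}$ on tensors supported in $B_{k_0}$; hence $D\Phi_{(\hat g_k,\hat\pi_k)}$ maps $\operatorname{span}\{(\widetilde h_i,\widetilde q_i)\}$ isomorphically onto $W'$, and setting $W_2^{(k)}:=\big(D\Phi_{(\hat g_k,\hat\pi_k)}\big)^{-1}(W_k)$ we obtain a subspace of $\operatorname{span}\{(\widetilde h_i,\widetilde q_i)\}$ --- so that every element of it is supported in the fixed ball $B_{k_0}$ --- with $D\Phi_{(\hat g_k,\hat\pi_k)}\big(W_2^{(k)}\big)=W_k$.

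Finally I would repeat the inverse function theorem step of Theorem~\ref{DenThm} with $W=W_1^{(k)}\times W_2^{(k)}$, where $W_1^{(k)}$ is the complement of $\operatorname{Ker}(S_k)$ furnished by the proof of the lemma and $\overline T\big((u,X),(h,q)\big)=\Phi\big(u^4\hat g_k+h,\ u^2(\hat\pi_k+\mathcal L_{\hat g_k}X)+q\big)$; then $D\overline T_{((1,0),(0,0))}$ is an isomorphism by construction, differs in operator norm by $o(1)$ from the corresponding isomorphism built from $(g,\pi)$, and $\Phi(\hat g_k,\hat\pi_k)\to 0$ in $W^{1,p}_{-2-\delta}(M)\times W^{0,p}_{-2-\delta}(M)$, so the quantitative inverse function theorem produces, for $k$ large, a solution $\big((u,X),(h,q)\big)$ in a $k$-independent neighborhood of $\big((1,0),(0,0)\big)$ with $(h,q)\in W_2^{(k)}$, that is, with $h$ and $q$ supported in $B_{k_0}$. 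The part I expect to require the most care is this last step: checking that $D\overline T_{((1,0),(0,0))}$ has an inverse bounded uniformly in $k$ --- despite the possibly $k$-dependent summand of $W_1^{(k)}$ coming from a jump of $\dim\operatorname{Ker}(S_k)$ --- so that the inverse function theorem domain does not shrink with $k$; granting that, the rest is a re-assembly of the argument of Theorem~\ref{DenThm} with the $k$-independent cokernel complement provided by the lemma.
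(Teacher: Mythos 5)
Your proposal is correct and is essentially the argument the paper intends: the corollary is stated without an explicit proof precisely because it is meant to follow from the preceding lemma by taking $S_k$ to be the $k$-dependent linearizations converging to the linearization at the original data, fixing once and for all a complement $W'$ of $\operatorname{Range}(S')$ spanned by images of compactly supported tensors, and observing that the lemma places $W_k$ inside this fixed $W'$, so the perturbations $(h,q)$ may be taken in the span of finitely many tensors supported in a fixed ball. Your additional observations (operator-norm convergence of $S_k$, agreement of $D\Phi_{(\hat g_k,\hat\pi_k)}$ with $D\Phi_{(\hat g_{k_0},\hat\pi_{k_0})}$ on tensors supported in $B_{k_0}$, and the caveat about uniformity of the inverse in the inverse function theorem step) fill in details the paper leaves implicit, and the last caveat is inherited from the paper's own sketch of Theorem~\ref{DenThm} rather than a defect of your route.
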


Next, the key lemma (Lemma \ref{Key}) used to prove Theorem \ref{DenThm2} is an {\it a priori} type estimate for $2$nd order elliptic equations $P v = f $ which have a symmetric property at infinity. Roughly speaking, if we know $P $ and $f$ are even (or odd, respectively) then we hope solutions $v$ will be even (or odd, respectively) as well. However, this is not true generally because boundary values can affect solutions dramatically. For example, consider two harmonic functions $ u_1 , u_2 $ in $\mathbb{ R }^3 \setminus B_{R_0}$,
$$
		u_1 = \frac{ 1 }{ | x | }  \mbox{ and  } u_2 = \frac{ c \cdot x }{ | x |^3}. 
$$
Both $ | u_1 | $ and $| u_2 | $ tend to zero at infinity. However, $ u_1 $ is even and $ u_2 $ is odd. They are solutions for different boundary values on the inner boundary. Nevertheless, in the case that the boundary value is very small, we will show the symmetry of the solutions is not affected much in the region away from the boundary. Before we state the lemma, we will give some definitions of the operators we consider. This class of operators is discussed in detail in \cite{Bartnik}.

\begin{defi} \label{def:evenop}
Let $P$ defined as $ Pu = a^{ij}(x) \partial^2_{ij} u + b^i (x) \partial_i u + c (x) u$ be an elliptic operator. 
\begin{itemize}
\item[(1)]$ P $ is said to be asymptotic (at rate $ \tau $) to an elliptic operator $ \widetilde{ P } $, $ \widetilde{ P } u = \widetilde { a }^{ij} (x) \partial^2_{ ij } u + \widetilde{ b }^i ( x ) \partial_i u + \widetilde{ c }(x) u$, if there exist $ q \in  ( 3, \infty) $, $ \tau \ge 0 $, and a constant $ C $ such that over the region $ M \setminus B_{R_0}$,
\begin{align*}
		\| a^{ ij } - \widetilde {a }^{ij} \|_{ W^{ 1, q }_{ - \tau } } + \| b^i - \widetilde{ b }^i \|_{ W^{0,q }_{ -1 - \tau } } + \| c - \widetilde{ c } \|_{ W^{ 0, q/2 }_{ -2 - \tau } } \le C.  
\end{align*}
\item[(2)] $P^{ odd } $ is the odd part of the operator, defined on $ M \setminus B_{ R_0 }$ by
$$
		P^{ odd }  = ( a^{ ij } )^{ odd } \partial_{ij}^2  + ( b^i )^{even} \partial_i  + c^{odd }.
$$
\end{itemize}
\end{defi}
\begin{Remark}
We assume that $P: W^{s,p}_{-a} \rightarrow W^{s-2, p}_{-2-a}$ for $s = 2$ or $3$, $1 < p < q$ and a non-integer $a>0$, and define
\[
	\| P \|_{op; M \setminus B_{R}} = \sup \{ \| Pw \|_{W^{s-2, p}_{-2-a} (M)} : \| w \|_{W^{s,p}_a(M)} =1, \mbox{supp} w \subset M \setminus B_R\}.
\]
From Definition \ref{def:evenop}, we have
\begin{align*}
		\| P w - \widetilde{P} w \|_{W^{s-2, p}_{-2-a} (M)} \le&  \| a^{ ij } - \widetilde {a }^{ij} \|_{ W^{ 1, q }_{ - \tau } ( M \setminus B_R ) } \\
		&+ \| b^i - \widetilde{ b }^i \|_{ W^{0,q }_{ -1 - \tau } ( M \setminus B_R ) } + \| c - \widetilde{ c } \|_{ W^{ 0, q/2 }_{ -2 - \tau } ( M \setminus B_R ) }\rightarrow 0
\end{align*}
as $ R \rightarrow \infty$. Therefore, $\| P - \widetilde{P} \|_{op; M \setminus B_R} \rightarrow 0$ as $R \rightarrow 0$.
\end{Remark}
\begin{defi}
We say that a sequence of elliptic operators $ P_k $,	$P_k u = a_{(k)}^{ij}(x) \partial_{ij}^2 u + b_{(k)}^i (x) \partial_i u + c_{(k)} (x) u$, is asymptotic to $ \widetilde{ P }$ uniformly if given $ \epsilon $, there exist $R$ and $k_0$ such that
\begin{align*}
		&\| a^{ ij }_{ (k) }  - \widetilde {a }^{ij} \|_{ W^{ 1, q }_{ - \tau } ( M \setminus B_R )} + \| b^i_{ (k) }  - \widetilde{ b }^i \|_{ W^{0,q }_{ -1 - \tau }( M \setminus B_R ) }& \\
		&\hskip 120pt + \| c_{ (k) }  - \widetilde{ c } \|_{ W^{ 0, q/2 }_{ -2 - \tau }( M \setminus B_R ) } \le \epsilon  \mbox{ for all } k > k_0.&
\end{align*}
\end{defi}

\begin{lemma}\label{Key}
Let $\Delta_{\delta}$ be the Euclidean Laplacian in $\mathbb{R}^3$. Let $\{ P_k \}$ be a sequence of  $2$nd order elliptic operators asymptotic to $\Delta_{\delta}$  uniformly, and for $s = 2$ or $3$, $1<p<q$, and  a non-integer number $a>0$,
$$
		P_k : W^{ s, p}_{ - a}(M) \rightarrow W^{s-2, p}_{ -2 - a }(M).
$$
Assume $\{v_k\} \subset W^{ s, p}_{ - a}(M)$, $\{ f_k \} \subset W^{s-2, p}_{ -3 - a }(M)$ are sequences of functions. We also assume $(v_k)^{odd} \in W_{ -1- a }^{s,p} ( M \setminus B_{ R_0} )$, $ \| (v_k)^{odd} \|_{ W_{ - a }^{s,p} ( M \setminus B_{ R_0} ) } \rightarrow 0 $ as $ k \rightarrow \infty $, and $(v_k)^{odd}$ satisfy $P_k (v_k)^{odd} = f_k$. Then there exist $R$ large and $k_0$ large such that 
$$
		\| (v_k)^{ odd } \|_{ W^{ s, p }_{ -1 - a } ( M \setminus B_{2 R}) } \le c \| f_k \|_{ W^{ s-2 ,p }_{ - 3 - a }( M \setminus B_R ) } + c\mbox{ for all } k > k_0,
$$
where $ c= c \big(s, p, a, \| P_k - \Delta_{\delta} \|_{op; M \setminus B_{R_0}} \big) , R = R\big( s, p, a, \| P_k - \Delta_{\delta} \|_{op; M\setminus B_{R_0}} \big) , k_0 = k_0 (R). $
\end{lemma}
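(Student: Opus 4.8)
The plan is to exploit the standard weighted elliptic estimates of Bartnik for operators asymptotic to the Euclidean Laplacian, applied to the odd parts $v_k^{odd}$, and to control the error terms coming from (i) the discrepancy between $P_k$ and $\Delta_\delta$ and (ii) the discrepancy between $P_k^{odd}$ and $P_k$, both of which are small in the relevant norms on $M\setminus B_R$ once $R$ is large, uniformly in $k$. The key structural observation is that $v_k^{odd}$ satisfies $\Delta_\delta v_k^{odd} = f_k - (P_k - \Delta_\delta) v_k^{odd} - (P_k^{odd} - P_k) v_k^{odd}$ on $M\setminus B_{R_0}$ (here one must be a little careful: $P_k v_k^{odd} = f_k$ is given as a hypothesis, so actually I will write $\Delta_\delta v_k^{odd} = f_k + (\Delta_\delta - P_k)v_k^{odd}$ directly), and then apply the Bartnik a priori estimate for $\Delta_\delta$ on $W^{s,p}_{-1-a}(M\setminus B_{2R})$ in terms of the $W^{s-2,p}_{-3-a}$ norm of the right-hand side on a slightly larger annular region together with a lower-order $W^{0,p}_{-1-a}$ norm of $v_k^{odd}$ on $M\setminus B_R$. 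Since $a$ is a non-integer positive weight, $-1-a$ is a non-exceptional weight for $\Delta_\delta$ on $\mathbb{R}^3$, so the interior/exterior Schauder-type weighted estimate with a compact error term is available.

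The steps, in order, are: First, fix $\epsilon_0$ small (to be determined by the constants in the Bartnik estimate); by the uniform asymptotic hypothesis choose $R$ and $k_0$ so that $\|a_{(k)}^{ij}-\delta^{ij}\|_{W^{1,q}_{-\tau}(M\setminus B_R)} + \|b_{(k)}^i\|_{W^{0,q}_{-1-\tau}(M\setminus B_R)} + \|c_{(k)}\|_{W^{0,q/2}_{-2-\tau}(M\setminus B_R)} \le \epsilon_0$ for all $k > k_0$. Second, estimate the error term $(\Delta_\delta - P_k)v_k^{odd}$: each piece is a product of a small coefficient and a derivative of $v_k^{odd}$ of order $\le 2$; by Hölder in the weighted spaces (using $q > 3$ and $p$ as given, and that the weights add correctly: $(-\tau) + (-1-a) = -1-(\tau+a)$ etc.) one gets $\|(\Delta_\delta-P_k)v_k^{odd}\|_{W^{s-2,p}_{-3-a}(M\setminus B_R)} \le c\,\epsilon_0 \|v_k^{odd}\|_{W^{s,p}_{-1-a}(M\setminus B_R)}$, where I must be slightly careful about which weight the left side lands in — the point is that the coefficient decay $\tau \ge 0$ is enough to push the product into $W^{s-2,p}_{-2-a}$, but the extra decay needed to reach $-3-a$ comes not from the coefficients but from the $W^{s,p}_{-1-a}$ control we are trying to prove, i.e. the $v_k^{odd}$ we feed in already has the improved decay $-1-a$ rather than $-a$, buying one power of $|x|$. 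Third, apply the weighted elliptic estimate for $\Delta_\delta$:
\[
\|v_k^{odd}\|_{W^{s,p}_{-1-a}(M\setminus B_{2R})} \le c\big(\|\Delta_\delta v_k^{odd}\|_{W^{s-2,p}_{-3-a}(M\setminus B_R)} + \|v_k^{odd}\|_{W^{0,p}_{-1-a}(M\setminus B_R)}\big).
\]
Fourth, substitute $\Delta_\delta v_k^{odd} = f_k + (\Delta_\delta - P_k)v_k^{odd}$ and the error bound from Step 2, and absorb the term $c\,\epsilon_0\|v_k^{odd}\|_{W^{s,p}_{-1-a}}$ into the left side by choosing $\epsilon_0$ with $c\,\epsilon_0 < 1/2$; this is where $R = R(\|P_k-\Delta_\delta\|_{op})$ and $k_0 = k_0(R)$ are pinned down. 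Fifth, handle the remaining lower-order term $\|v_k^{odd}\|_{W^{0,p}_{-1-a}(M\setminus B_R)}$: this is bounded (uniformly in $k$) by $c$, because by hypothesis $v_k^{odd}\in W^{s,p}_{-1-a}(M\setminus B_{R_0})$ and, more importantly, $\|v_k^{odd}\|_{W^{s,p}_{-a}(M\setminus B_{R_0})}\to 0$; in particular the $W^{0,p}_{-1-a}$ norm on the far region is bounded by a constant (one can even take the constant to tend to $0$, but a uniform bound $c$ suffices for the stated conclusion). Collecting terms yields $\|v_k^{odd}\|_{W^{s,p}_{-1-a}(M\setminus B_{2R})} \le c\|f_k\|_{W^{s-2,p}_{-3-a}(M\setminus B_R)} + c$ for $k > k_0$, which is the claim.

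The main obstacle I anticipate is the bookkeeping in Step 2 — making precise that the product of the small (but merely $O(|x|^{-\tau})$ with possibly $\tau = 0$) coefficient differences with the second derivatives of $v_k^{odd}$ actually lands in the weight $-3-a$ rather than only $-2-a$. The resolution is that we are estimating the \emph{odd} part and feeding back the improved weight: a function in $W^{s,p}_{-1-a}$ has second derivatives in $W^{s-2,p}_{-3-a}$ already (before multiplying by coefficients), so even coefficients that are merely bounded (not decaying) keep us in $W^{s-2,p}_{-3-a}$, and the smallness needed for the absorption in Step 4 comes from $\|a_{(k)}^{ij}-\delta^{ij}\|$ etc. being $\le \epsilon_0$ on $M\setminus B_R$ in the sup-type control implied by $W^{1,q}_{-\tau}$ with $q>3$ via weighted Sobolev embedding. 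A secondary technical point is the transition from the annulus/exterior form of Bartnik's estimate (stated on $M\setminus B_{2R}$ with error on $M\setminus B_R$) to a clean statement; this is routine once $R$ is fixed, using that $\Delta_\delta$ is injective modulo the finite-dimensional set of weighted-harmonic functions, none of which lie in $W^{s,p}_{-1-a}$ for non-integer $a$, so the only genuine error term is the lower-order one on the fixed region, already absorbed into the additive constant $c$.
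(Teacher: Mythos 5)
Your overall strategy---rewrite the equation as $\Delta_{\delta}v_k^{odd}=f_k+(\Delta_{\delta}-P_k)v_k^{odd}$, apply Bartnik's weighted estimate at the non-exceptional weight $-1-a$, and absorb the operator-difference term by choosing $R$ large using the uniform asymptotic hypothesis---is exactly the paper's strategy. But there is a genuine gap in your Step 5, and it sits at the heart of what the lemma is actually proving.

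You invoke an exterior estimate whose lower-order term is $\| v_k^{odd}\|_{W^{0,p}_{-1-a}(M\setminus B_R)}$, taken over the entire unbounded exterior region, and then claim this is bounded uniformly in $k$ ``because $v_k^{odd}\in W^{s,p}_{-1-a}(M\setminus B_{R_0})$ and $\|v_k^{odd}\|_{W^{s,p}_{-a}}\to 0$.'' Neither hypothesis gives this. Membership in $W^{s,p}_{-1-a}$ makes each individual norm finite but says nothing about uniformity in $k$; and with the paper's convention the weight $-1-a$ is \emph{stronger} than $-a$ on an unbounded exterior region (one has $\|f\|_{W^{0,p}_{-a}(M\setminus B_R)}\le R^{-1}\|f\|_{W^{0,p}_{-1-a}(M\setminus B_R)}$, with no reverse inequality), so smallness in weight $-a$ cannot control the $-1-a$ norm out to infinity. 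A uniform $W^{0,p}_{-1-a}$ bound on $M\setminus B_R$ is essentially the improved decay statement the lemma is trying to establish, so assuming it is circular.

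The paper's fix is the one you should adopt: instead of an exterior estimate with an unlocalized lower-order term, multiply $v_k^{odd}$ by a cutoff $\phi$ vanishing on $B_R$ and equal to $1$ outside $B_{2R}$, so that $\phi v_k^{odd}$ is globally defined and Bartnik's \emph{isomorphism} estimate $\|\phi v_k^{odd}\|_{W^{s,p}_{-1-a}(M)}\le c_1\|\Delta_{\delta}(\phi v_k^{odd})\|_{W^{s-2,p}_{-3-a}(M)}$ applies with no lower-order term at all. The price is commutator terms involving $D\phi$, $D^2\phi$, but these are supported on the \emph{compact} annulus $A_R=\{R\le|x|\le 2R\}$, where the weights $-a$ and $-1-a$ are comparable up to a factor $2R$; there the hypothesis $\|v_k^{odd}\|_{W^{s-1,p}_{-a}}\to 0$ makes the contribution $\le 1$ for $k>k_0(R)$, which is where the additive constant $c$ in the conclusion comes from. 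Your Step 2 bookkeeping worry, by contrast, is a non-issue and is handled the same way in the paper via the operator norm $\|P_k-\Delta_{\delta}\|_{op;M\setminus B_R}$.
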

\begin{proof}
Because $  (v_k)^{ odd } $ is not a global function, we multiply it by a smooth function $\phi$,
$$
		\phi (x) = \left\{\begin{array}{ll} 0 & \mbox{ when } |x| \le R \\
				\mbox{between $0$ and $1$} & \mbox{ when }  R \le |x| \le 2 R\\
 				1 & \mbox{ when } |x| \ge 2 R,
		\end{array} \right. 
$$
where $ \phi $ satisfies $ | D\phi | \le c/ R, | D^2 \phi | \le c /R^2$ for some constant $c$ independent of $R$, where the real number $R$ will be chosen later. We have $ \phi (v_k)^{ odd } \in W^{ s, p }_{ -1 - a }( M ) $ and 
\begin{align*}
		P_k  \big( \phi (v_k)^{ odd } \big) =& \phi f_k + a_{ (k) }^{ ij} \Big( \frac{ \partial^2 \phi }{ \partial x^i \partial x^j} \Big) (v_k)^{ odd } \\
		&+ 2 a_{ (k) }^{ij} \frac{ \partial \phi }{ \partial x^i }  \frac{ \partial (v_k)^{ odd } }{ \partial x^j } + b^i_{ ( k ) } \Big( \frac{ \partial \phi }{ \partial x^i} \Big) (v_k)^{ odd }.
\end{align*}
By \cite[Theorem 1.7]{Bartnik}, there exists a constant $ c_1 = c_1(s, p,a )$ such that 
$$
		\| \phi (v_k)^{ odd } \|_{ W^{s,p}_{ -1 - a}( M ) } \le c_1 \| \Delta_{\delta} ( \phi (v_k)^{ odd }  ) \|_{ W^{ s - 2,p}_{ -3 - a}( M ) }.
$$
We then consider the difference between $\Delta_{\delta}$ and $P_k$ by viewing them as the operators from $W^{s,p}_{-1-a}(M)$ to $W^{s-2, p}_{-3-a}(M)$. By the remark after Definition \ref{def:evenop}, the operator norm $\| P_k - \Delta_{ \delta }\|_{op; M\setminus B_R}$ tends to zero as $R \rightarrow \infty$ uniformly in $k$ because $P_k$ is asymptotic to $\Delta_{\delta}$ uniformly. 
Therefore, 
\begin{align*}
		&\| \phi (v_k)^{odd} \|_{ W^{ s ,p }_{ -1 - a } ( M ) }\\
		 &\le c_1 \| ( P_k - \Delta_{\delta} )( \phi (v_k)^{odd}) \|_{ W^{ s - 2 ,p }_{ -3 - a } ( M ) } + c_1 \|  P_k ( \phi (v_k)^{odd}) \|_{ W^{ s - 2 ,p }_{ -3 - a } ( M ) }\\
		& \le  c_1   \|  P_k - \Delta_{\delta}  \|_{op; M \setminus B_R }\| \phi (v_k)^{odd} \|_{ W^{ s,p }_{ -1 - a } ( M ) } + c_1 \|  P_k ( \phi (v_k)^{odd}) \|_{ W^{ s-2 ,p }_{ -3 - a } ( M ) }.
\end{align*}
Choose $ R $ such that $  c_1   \|  P_k  - \Delta_{\delta}  \|_{op; M \setminus B_R }  \le \frac{1}{2}$ and absorb that term to the left hand side. Then
\begin{align*}
		\| \phi (v_k)^{odd} \|_{ W^{ s,p }_{ -1 - a } ( M ) } \le & 2 c_1 \|  P_k ( \phi (v_k)^{odd}) \|_{ W^{ s-2,p }_{ -3 - a } ( M ) } \\
		 \le & c_2 \| f_k \|_{ W^{ s-2,p }_{ -3 - a } ( M \setminus B_R) } + c_2 \| ( D^2 \phi ) (v_k)^{ odd }\|_{ W^{ s-2,p }_{ -3 - a } ( A_R ) } \\
		& + c_2  \|  D \phi \cdot D (v_k)^{ odd }\|_{ W^{ s-2,p }_{ -3 - a } ( A_R ) } + c_2 \| ( D \phi ) (v_k)^{odd} \|_{ W^{ s-2,p }_{ -2 - a } ( A_R ) } \\
		 \le & c_2 \| f_k \|_{ W^{ s-2,p }_{ -3 - a } ( M \setminus B_R) } + c_3 \| (v_k)^{ odd }\|_{ W^{ s-1,p }_{ -1 - a } ( A_R ) },
\end{align*}
where $A_R = \{ x: R \le |x| \le 2R\}$. Because $\| (v_k)^{ odd }\|_{ W^{ s-1,p }_{ - a } ( M \setminus B_{R_0}) } \rightarrow 0 $ as $ k \rightarrow \infty$, for $ \epsilon = 1/(2  R) $, there exists $ k_0 $ such that for all $ k > k_0$
$$
		\| (v_k)^{ odd } \|_{ W^{ s-1 ,p }_{ -1 - a } ( A_R ) } \le 2 R \| (v_k)^{ odd } \|_{ W^{ s-1 ,p }_{ - a } ( A_R ) } \le 2 R\epsilon = 1.
$$
As a result, we have the Schauder-type estimate
$$
		\| (v_k)^{ odd } \|_{ W^{ s, p }_{ -1 - a } ( M \setminus 2 R) } \le c_2 \| f_k \|_{ W^{ s-2 ,p }_{ - 3 - a }( M \setminus B_R ) } + c_3\mbox{ for all } k > k_0.
$$ 
\end{proof}

\begin{proof}[Proof of Theorem \ref{DenThm2}]
\quad

\noindent\textbf{\small 1. Estimates on $u^{odd} $ and $ ( X^i )^{ odd }$.} We construct $ ( \bar{g}_k , \bar { \pi }_k ) $ as in Theorem \ref{DenThm} in the form 
\[
	\bar{ g }_k = u_k^4 \hat{g}_k + h_k , \quad \bar{ \pi }_k = u_k^2 ( \hat{ \pi }_k + \mathcal{L}_{\hat{g}_k} X_k ) + q_k.
\] 
Recall that $k$ is the radius of which we cut off the original data. Again, we drop the subindex $k$ when it is clear from context. By Theorem \ref{DenThm}, $ u $ and $X$ exist and satisfy the system of the constraint equations (\ref{CE}). From the constraint equation (\ref{CE}) for $u$ in $ M \setminus B_{ R_0 }$, we have
\begin{eqnarray*}
		0 &=& \Delta_{ \hat{ g } } u - \frac{1}{8} \Big( R( \hat{ g } ) - | \hat{ \pi  } + \mathcal{ L }X |_{ \hat{ g } }^2 + \frac{ 1 }{ 2 } \big( \mbox{Tr}_{ \hat{ g } } \hat{ \pi } + \mathcal{ L }X \big)^2\Big) u\\
		   &=& \hat{ g }^{ij} \frac{ \partial^2 u }{ \partial x^i \partial x^j} + \sqrt{ \hat{g} }^{ -1 } \Big( \frac{ \partial }{ \partial x^i} \hat{ g }^{ ij } \sqrt{ \hat{ g } }\Big) \frac{ \partial u }{ \partial x^j} \\
		   &&- \frac{1}{8} \Big( R( \hat{g} ) - | \hat{ \pi } + \mathcal{L}X |_{ \hat{ g } }^2 + \frac{1}{2} \big(\mbox{Tr}_{ \hat{ g } } ( \hat{ \pi } + \mathcal{ L } X  ) \big)^2 \Big) u \\
		   & \equiv & P_1 u.
\end{eqnarray*}
On $M \setminus B_{ R_0 }$, $ u^{ odd }$ satisfies the following equation, 
\begin{eqnarray*}
		P_1 u^{ odd } & = & ( P_1 u )(x) - ( P_1 u)(-x) - P^{ odd }_1\big( u(-x) \big)\\
		& = & \frac{ 1 }{ 8 } \left( R( \hat{ g } )- | \hat{ \pi } + \mathcal{L}X |_{ \hat{ g } }^2 + \frac{1}{2} \big(\mbox{Tr}_{ \hat{ g } } ( \hat{ \pi } + \mathcal{ L } X  )   \right)^{ odd } u( -x ) \\
		& & - \left( \hat{ g }^{ ij }(x) - \hat{ g }^{ij}( - x ) \right) \frac{ \partial^2 }{ \partial x^i \partial x^j} u(-x) \\
		& & - \left(  \sqrt{ \hat{g} }^{ -1 }( x ) \Big( \frac{ \partial }{ \partial x^i} \hat{ g }^{ ij } ( x ) \sqrt{ \hat{ g } }( x ) \Big) \right.\\
		&& \qquad \qquad \left.- \sqrt{ \hat{g} }^{ -1 }( - x ) \Big( \frac{ \partial }{ \partial x^i} \hat{ g }^{ ij } ( - x ) \sqrt{ \hat{ g } }( - x ) \Big)\right) \frac{ \partial }{ \partial x^j}  u( -x )\\
		&=& f_1 + f_2,
\end{eqnarray*}
where 
$$
		f_1= \frac{ 1 }{ 8 } \left( - | \hat{ \pi } + \mathcal{L}X |_{ \hat{ g } }^2 + \frac{1}{2} \mbox{Tr}_{ \hat{ g } } ( \hat{ \pi } + \mathcal{ L } X  )   \right)^{ odd } u( -x ), \quad f_2 = P_1 u^{ odd } - f_1.
$$
$f_1$ contains the terms involving $X$. We will use a bootstrap argument to improve its decay rate. $f_2$ contains the terms which have expected good decay rate already, such as $g^{odd}, \pi^{even}$. A direct calculation tells us 
$$
		\| f_1 \|_{ W^{ 1,p}_{ -2 - 2\delta } ( M \setminus B_{ R_0 }) } \le c, \quad \| f_2 \|_{ W^{ 1,p}_{ -3 -  \delta } ( M \setminus B_{ R_0 }) } \le c.
$$
We emphasize that through out this proof, $ c $ is a constant independent of $k$. 

From the constraint equations (\ref{CE}) for $X$, we have
\begin{align*}
		 &\big( \mbox{div}_{ \hat{ g } } ( \mathcal{ L }X) \big)_i  + \mbox{div}_{ \hat{ g } } ( \hat{ \pi } )_i + 4 u^{ -1 } \hat{ g }^{ kj } u_j \big( \hat{ \pi } + \mathcal{ L } X \big)_{ik} \\
		 &\qquad  \qquad - 2 u^{ -1 } u_i \mbox{Tr}_{ \hat{ g } } (\hat{ \pi } + \mathcal{ L }X ) = 0 .
\end{align*}
If we compute the first term in local coordinates, we have
\begin{align*}
		\big( \mbox{div}_{ \hat{ g } } ( \mathcal{ L }X) \big)_i &= \big( \Delta_{ \hat{ g }} X \big)_i + \hat{Ric}_{ik} X^k \\
		&= \hat{ g }^{kl} \frac{ \partial^2 }{ \partial x^k \partial x^l } X_i - \hat{g}^{ kl } \frac{ \partial }{ \partial x^k} ( X^p \Gamma_{li}^q \hat{ g }_{ pq } )- \hat{ g }^{ kl } X^p \Gamma^q_{ lp } \Gamma^r_{ ki }\hat{ g }_{ qr } + \hat{Ric}_{ik} X^k.
\end{align*}
We define $ P_2 X_i \equiv \hat{ g }^{kl} \frac{ \partial^2 }{ \partial x^k \partial x^l } X_i$ and then $ P_2 X_i = F_i$, where $ F_i $ contains the remainder terms from above identities,
\begin{eqnarray*}
		F_i & = & \hat{g}^{ kl } \frac{ \partial }{ \partial x^k} ( X^p \Gamma_{li}^q \hat{ g }_{ pq } ) + \hat{ g }^{ kl } X^p \Gamma^q_{ lp } \Gamma^r_{ ki }\hat{ g }_{ qr } - \hat{Ric}_{ik} X^k  \\
		&& - \mbox{div}_{ \hat{ g } } ( \hat{ \pi } )_i - 4 u^{ -1 } \hat{ g }^{ kj } u_j \big( \hat{ \pi } - \mathcal{ L } X \big)_{ik} + 2 u^{ -1 } u_i \mbox{Tr}_{ \hat{ g } } (\hat{ \pi } + \mathcal{ L }X ).
\end{eqnarray*}
Then we have 
\begin{align*}
		 P_2 \left( X_i\right)^{odd} &= \left(F_i \right)^{odd} - \left( \hat{ g }^{kl}(x) \frac{ \partial^2 }{ \partial x^k \partial x^l } -\hat{ g }^{kl}(-x) \frac{ \partial^2 }{ \partial x^k \partial x^l } \right)X_i(-x)\\
		  &= a_i + b_i,
\end{align*} 
where
$$
		a_i = -\left( \mbox{div}_{ \hat{ g } } ( \hat{ \pi } )_i\right)^{odd}, \quad b_i= P_2 \left( X_i\right)^{odd} - a_i, 
$$
where $b_i$ contains $u^{odd}$ and $X^{odd}$ and we will bootstrap to improve its decay rate. A straightforward calculation gives us 
$$
				\| a_i \|_{ W^{ 0,p}_{ -3 - \delta } ( M \setminus B_{ R_0 }) } \le c, \quad \| b_i \|_{ W^{ 0,p}_{ -2 - 2  \delta } ( M \setminus B_{ R_0 }) } \le c.
$$
From the above, we derive the system
\begin{eqnarray*}
		&& P_1 u^{odd} = f_1 + f_2,\\
		&& P_2 (X_i)^{ odd } = a_i + b_i, \quad i = 1,2,3.
\end{eqnarray*}
We apply Lemma \ref{Key} (with $a=2\delta -1 \le \delta$) for each equation because $P_1$ and $P_2$ are obviously asymptotic to $\Delta_{\delta}$ and 
\begin{align*}
		&\| (u_k)^{odd}\|_{W^{3,p}_{-a}(M \setminus B_{R_0})} \le \| (u_k)^{odd} \|_{W^{3,p}_{- \delta} (M\setminus B_{R_0})} \le 2 \| u_k - 1\|_{W^{3,p}_{-\delta} (M)}  \rightarrow 0,\\
		& \| (X_i)_k^{odd}\|_{W^{2,p}_{-\delta} (M \setminus B_{R_0})} \le \| (X_i)_k^{odd} \|_{W^{2,p}_{- \delta} (M\setminus B_{R_0})} \le 2 \| (X_i)_k\|_{W^{2,p}_{-\delta} (M)}  \rightarrow 0\quad \mbox{ as } k \rightarrow \infty.
\end{align*}
Because $-1-a = 2\delta$, by Lemma \ref{Key}, there exist $R_1$ and $k_1$ such that for all $k > k_1$, 
$$
		\| (u_k)^{odd} \|_{ W^{3,p}_{  - 2 \delta } ( M \setminus B_{R_1} )} \le c, \quad \| (X_i)_k^{odd} \|_{ W^{2,p}_{  - 2 \delta } ( M \setminus B_{R_1} )} \le c.
$$
Once we derive these estimates, the decay rates for $f_1$ and  $b_i$ are improved. The bootstrap argument allows us to conclude that for some $R_2 \ge R_1$, $k_2 \ge k_1$,  
$$
		\| (u_k)^{odd} \|_{ W^{3,p}_{ - 1 -  \delta } ( M \setminus B_{R_2} )} \le c, \quad \| (X_i)_k^{odd} \|_{ W^{2,p}_{ - 1 -  \delta } ( M \setminus B_{R_2} )} \le c \mbox{ for all } k > k_2.
$$
Therefore, for any given $\epsilon$ and $\delta_0 \in ( 0, \delta)$, there exist $R$ and $k_0$ so that for all $k > k_0$, 
$$
		\| (u_k)^{odd} \|_{ W^{3,p}_{ - 1 -  \delta_0 } ( M \setminus B_R )} \le CR^{\delta_0 - \delta} \le  \epsilon, \quad \| (X_i)_k^{odd} \|_{ W^{2,p}_{ - 1 -  \delta_0 } ( M \setminus B_R )} \le \epsilon.
$$
Furthermore, by Corollary \ref{CptSupp}, the supports of $(h_k, q_k)$ are uniformly bounded in $k$. Hence we have 
$$
		\| \bar{g}_k^{ odd }\|_{ W^{3,p}_{-1 - \delta_0 } ( M \setminus B_R ) } \le \epsilon.
$$
Similarly, we have the estimate for $\bar{\tau}$.

\noindent\textbf{\small 2. Convergence of the Center of Mass and the Angular Momentum.} To prove the center of mass and the angular momentum of $( \bar{ g}, \bar{\pi})$ converge to those of $ ( g, \pi) $, the same idea of proving convergence of the mass and the linear momentum in \cite{CS} is employed. 

\begin{eqnarray*}
		&& | C_I^{\alpha}(g) - C_I^{\alpha}( \bar{g}) | \le \left| C_I^{ \alpha } (g) - \int_{ |x| = r } \big( R_{ij} - \frac{1}{2} R_g g_{ij} \big) Y_{(\alpha)}^i\nu_g^j \,d \sigma_g\right| \\
		 &&\qquad + \left| \int_{ |x| = r } \big( R_{ij} - \frac{1}{2} R_g g_{ij} \big) Y_{(\alpha)}^i \nu_g^j\, d \sigma_g - \int_{ |x| = r } \big( \bar{R}_{ij} - \frac{1}{2} \bar{R} \bar{g}_{ij} \big) Y_{(\alpha)}^i \nu_{ \bar{g}}^j \,d \sigma_{ \bar{g} }\right| \\
		&& \qquad + \left| C_I^{ \alpha }(\bar{g})  - \int_{ |x| = r } \big( \bar{R}_{ij} - \frac{1}{2} \bar{R} \bar{g}_{ij} \big)  Y_{(\alpha)}^i \nu_{ \bar{g}}^j \,d \sigma_{ \bar{g} }\right| .
\end{eqnarray*} 
The first and the third terms can be written as integrals over $\{ |x| \ge r\}$ by the divergence theorem. We will also use the fact that $\{ |x| \ge r\}$ is centrically symmetric, i.e. $x, -x \in \{ |x| \ge r\}$, to estimate those integrals over $\{ |x| \ge r\}$. For the first integral,
\begin{eqnarray}
		&& C_I^{\alpha} (g) - \int_{ |x| = r } \big( R_{ij} - \frac{1}{2} R_g g_{ij} \big)  Y_{(\alpha)}^i \nu_g^j \,d \sigma_g \notag\\
		&=& \int_{ |x| \ge r } \left[ x^{\alpha} R_g - 2 x^{\alpha} (g^{ij} - \delta^{ij} ) \big( R_{ij} - \frac{1}{2} R_g g_{ij} \big) \right.\notag\\
		&&\left.+  \big( R_{ij} - \frac{1}{2} R_g g_{ij} \big)  Y_{(\alpha)}^i  \Gamma_{kl}^k g^{lj}\right] \, d \mbox{vol}_g. \label{Ann}
\end{eqnarray}
Let $H =  \{ x\in M \setminus B_r:  x^1 \ge  0 \}$ be the half space. Then we can rewrite the above integral as follows:
\begin{align*}
		 &\int_H x^{\alpha} \big( R_g^{odd}\sqrt{g} + R_g(-x) \sqrt{g}^{odd} \big) \,dx\\
		 & - \int_H   2 x^{\alpha} ( g^{ij} )^{odd} \big( R_{ij}- \frac{1}{2} R_g g_{ij} \big) \sqrt{g}  \,dx\\
		 &- \int_H 2 x^{\alpha} ( g^{ij}- \delta^{ij})\left[ \Big( R_{ij}^{odd} -  \frac{1}{2} R_g^{ odd } g_{ij} -  \frac{1}{2} R_g(-x) g_{ij}^{odd}\Big)\sqrt{ g }\right.\\
		 &\left. + \big( R_{ij}- \frac{1}{2} R_g g_{ij}  \big)\sqrt{g}^{odd} \right] \,d x \\
		 & + \int_H  Y_{(\alpha)}^i  \Big(R_{ij}^{odd} -  \frac{1}{2} R_g^{ odd } g_{ij} -  \frac{1}{2} R_g g_{ij}^{odd} \Big)\Gamma_{kj}^i \sqrt{ g} \,dx\\
		& + \int_H Y_{(\alpha)}^i  \big( R_{ij}(-x) - \frac{1}{2} R_g( -x ) g_{ij}(-x)\big)\left[ ( { \Gamma_{kj}^i })^{even} \sqrt{ g} + \Gamma_{kj}^i( - x)\sqrt{ g}^{odd} \Big) \right]\,dx.
\end{align*}
We substitute $R_g$ in the first integral by using the constraint equation $ R_g = - \frac{1}{2} \big( \mbox{Tr}_{g} \pi \big)^2 + | \pi |_g^2$. We then bound above integrals symbolically and use the H\"{o}lder inequality:
\begin{align} \label{Annulusg}
		& c_1 \int_H |x| \Big( | \pi^{even} | | \pi| + | \pi |^2 | g^{ odd } | +  | g^{odd} | | D^2 g | + | g - \delta | | D^2( g^{ odd } ) | \Big) \,dx \notag\\
		& + c_1\int_H |x|^2 \Big( | D^2  ( g^{odd}) | | Dg |  + | D^2 g| | D ( g^{odd}) | + | D^2 g| | Dg| | g^{odd} |\Big) \,dx\notag \\
		&\le  c_2  \Big( \| \pi^{ even } \|_{ W^{1,p}_{ -2 - \delta } } \| \pi \|_{ W^{1,p}_{ -1- \delta} } +  \| \pi \|^2_{ W^{1,p}_{ -1- \delta} }\|g^{odd}\|_{ W^{2,p}_{ -1- \delta} } \notag\\
		& \hskip 2in + \| g^{ odd } \|_{ W^{2,p}_{ -1- \delta} }\| g \|_{ W^{2,p}_{ - \delta}}\Big) r^{ 1-2 \delta},
\end{align}
where $ c_1$ and  $c_2 $ are constants independent of $g$, $\pi$, and $r$. The weighted Sobolev norms above are over the region $\{ |x| \ge r\}$. Similarly, 
\begin{eqnarray} \label{AnnulusgBar}
		 &&\left| C_I^{ \alpha }(\bar{g})  - \int_{ |x| = r } \big( \bar{R}_{ij} - \frac{1}{2} \bar{R} \bar{g}_{ij} \big)  Y_{(\alpha)}^i \nu_{ \bar{g}}^j \,d \sigma_{ \bar{g} }\right| \notag\\
		& \le& c_2 \Big( \| \bar{\pi}^{ even } \|_{ W^{1,p}_{ -2 - \delta } } \| \bar{\pi} \|_{ W^{1,p}_{ -1- \delta} } +  \| \bar{\pi} \|^2_{ W^{1,p}_{ -1- \delta} }\|\bar{g}^{odd}\|_{ W^{2,p}_{ -1- \delta} }  \notag \\
		&& \qquad + \| \bar{g}^{ odd } \|_{ W^{2,p}_{ -1- \delta} }\| \bar{g} \|_{ W^{2,p}_{ - \delta}}\Big) r^{ 1-2 \delta} .
\end{eqnarray}

For the surface integral, we can assume $ k\gg r$ (recall $k$ is the radius of which we cut off the original data) and then $\bar{g} = u^4 g$ on $\{ |x| < 2r\}$.  Hence we have 
\begin{align*}
		&\int_{ |x| = r } \big( R_{ij} - \frac{1}{2} R_g g_{ij} \big)  Y_{(\alpha)}^i \nu_g^j \,d \sigma_g - \int_{ |x| = r } \big( \bar{R}_{ij} - \frac{1}{2} \bar{R} \bar{g}_{ij} \big)  Y_{(\alpha)}^i \nu_{ \bar{g}}^j \,d \sigma_{ \bar{g} }\\
		&=  \int_{ |x| = r } \Big( ( R_{ ij } - u^2 \bar{ R }_{ ij }) - \frac{1}{2} (R_g - u^6 \bar{R} ) g_{ij} \Big) Y_{(\alpha)}^i  \nu_g^j \,d \sigma_g\\
		&= \int_{ |x| = r }  \Big( ( 1 - u^2 ) R_{ ij } - \frac{1}{2} ( 1 - u^6 ) R_g g_{ij}\Big) Y_{(\alpha)}^i  \nu_g^j \,d \sigma_g \\
		& \quad + \int_{ |x| = r } \Big( u^2 ( R_{ij} - \bar{R}_{ij} ) - \frac{1}{2} u^6 ( R_g - \bar{ R } )g_{ij} \Big) Y_{(\alpha)}^i  \nu_g^j\, d \sigma_g\\
		&\le c_6 \max_{ |x| = r} \Big( | 1 - u | |x|^{ -\delta} \Big) \max_{ |x| = r} \Big( | D^2 g | | x |^{ 2 + \delta } \Big) r^{ 2 -2 \delta } \\
		&\quad + c_6 \max_{ |x| = r } \big| D^2 ( g - \bar{g} ) \big| |x|^{ 2 + \delta } r^{ 2 - \delta }. 
\end{align*}
Recall the Sobolev inequality for weighted Sobolev spaces (see \cite{Bartnik}). For $ n - sp < 0$ where $n = \dim M = 3 $ , we have $\| w \|_{ W^{ 0, \infty}_{ -\delta} (M) } \le c \| w\|_{W_{-\delta}^{ s, p } (M)} $. Therefore, for $p> 3$, 
\begin{align*}
		\max_{ | x | = r} | D^2 ( g - \bar{g})  | |x|^{2+\delta} & \le  \max_{ M \setminus B_r} | D^2 (g - \bar{g} ) | |x|^{ 2 + \delta}\\
		& = \| D^2  ( g - \bar{g})  \|_{W^{0, \infty}_{ -2 - \delta} ( M \setminus B_r) } \le c \| D^2( g - \bar{g})  \|_{W^{1,p}_{- 2  -\delta} (M \setminus B_r) }\\
		 &\le c \|   g - \bar{g}  \|_{W^{3,p}_{  -\delta} (M \setminus B_r) }. 
\end{align*}
Similarly, using the Sobolev inequality on the other term, we derive
\begin{align}\label{SurfaceInt}
		&\int_{ |x| = r } \big( R_{ij} - \frac{1}{2} R_g g_{ij} \big)  Y_{(\alpha)}^i \nu_g^j\, d \sigma_g - \int_{ |x| = r } \big( \bar{R}_{ij} - \frac{1}{2} \bar{R} \bar{g}_{ij} \big)  Y_{(\alpha)}^i \nu_{ \bar{g}}^j \,d \sigma_{ \bar{g} } \notag\\
		&\le c_7 \Big( \| 1 - u\|_{W^{ 3,p}_{ -\delta }( M \setminus B_r) } + \| g - \bar{g} \|_{ W^{ 3, p}_{-\delta}( M \setminus B_r) }\Big) r^{ 2 - \delta }.
\end{align}
To conclude the argument, for given $\epsilon$, let $R_1$ be a constant so that the right hand side of \eqref{Annulusg} is bounded by $\epsilon /16$. For this $\epsilon$, there exist $R_2$ and $k\ge k_0$ so that $\| 1 - u_k\|_{W^{ 3,p}_{ -\delta }( M  ) },  \| g - \bar{g}_k \|_{ W^{ 3, p}_{-\delta}(M) }, \| \pi - \bar{\pi}_k\|_{W^{ 1,p}_{ -1 -\delta }( M  ) }, \| \bar{\pi}^{even}_k \|_{ W^{ 1, p}_{-2-\delta}(M \setminus B_{R_2}) }$ and $ \| \bar{g}^{odd}_k \|_{ W^{ 3, p}_{ -1 -\delta}(M \setminus B_{R_2}) } $ are small, for all $k \ge k_0$. Using estimates \eqref{Annulusg}, \eqref{AnnulusgBar}, and \eqref{SurfaceInt}, we can conclude that, for all $k \ge k_0$,
\begin{eqnarray*}
		&& | C_I^{\alpha}(g) - C_I^{\alpha}( \bar{g}_k) | <  \epsilon.
\end{eqnarray*}

To prove that angular momentum of $ \bar{g} $ is close to that of $g$, we notice that 
\begin{align*}
		& J_{\alpha} - \int_{ |x| = r } \pi_{jk}  Z_{(\alpha)}^j \nu_g^k \, d \sigma_g = \int_{ |x| \ge r } div_g \big( \pi_{jk}  Z_{(\alpha)}^j  \big) \, d {\rm vol}_g \\
		 &= \int_{ |x| \ge r }  \pi_{ jk  } g^{kl}\Big( \frac{ \partial }{ \partial x^l }Z_{(\alpha)}^j + Z_{(\alpha)}^{m} \Gamma_{ ml }^j\Big)\, d {\rm vol}_g \\
		 &= \int_{ |x| \ge r } \pi_{ jk }  \frac{ \partial }{ \partial x^k } Z_{(\alpha)}^j + \pi_{ jk  }( g^{kl}- \delta^{kl} )\frac{ \partial }{ \partial x^l } Z_{(\alpha)}^j + \pi_{jk} g^{kl} Z_{(\alpha)}^{m} \Gamma_{ ml }^j\, d {\rm vol}_g.
\end{align*}
The first term is zero since $ \pi_{jk}$ is a symmetric tensor and $ Z^{\alpha}$ is a Killing vector field. The other terms after integration are bounded by $C r^{1 - 2 \delta}$. Then the rest of the argument works the same as the case of center of mass.
\end{proof}

In the proof of convergence of the center of mass and the angular momentum, we showed that the limits at infinity can be approximated by surface integrals at the finite radius and  the difference for surface integrals at the finite radius is arbitrarily small. Since we will use this argument several times through this paper, we formulate the argument into the following lemma.

\begin{lemma} \label{Conv}
Let $F(g)$ be a vector field depending on $x, g_{ij}, D g_{ij}, D^2 g_{ij} $ smoothly defined in $M \setminus B_{R_0}$. Let $a, b >0 $, and let $r$ be the radius. Assume that the following estimates hold for $k > r$, and for $g, \bar{g}_k$ satisfying the assumptions in Theorem {\ref{DenThm2}} (Density Theorem).
\begin{itemize}
\item[(1) ] 
\begin{align} \label{Annulus}
		&\bigg| \int_{ |x| \ge r } {\rm div}_g F(g) \,d{\rm vol}_g \bigg| \le c \bigg(  \| \pi^{ even } \|_{ W^{1,p}_{ -2 - \delta } ( M \setminus B_r )} \| \pi \|_{ W^{1,p}_{ -1- \delta}( M \setminus B_r ) } \notag \\
		& \qquad \quad +  \| \pi \|^2_{ W^{1,p}_{ -1- \delta} ( M \setminus B_r )}\|g^{odd}\|_{ W^{2,p}_{ -1- \delta}( M \setminus B_r ) } \notag \\
		& \qquad \quad + \| g^{ odd } \|_{ W^{2,p}_{ -1- \delta}( M \setminus B_r ) }\| g - \delta \|_{ W^{2,p}_{ - \delta}( M \setminus B_r )} + \| g -\delta \|^2_{ W^{2,p}_{ - \delta} ( M \setminus B_r )} \bigg) r^{-a},
\end{align}
\item[(2) ] 
\begin{align} \label{Sphere}
		&\bigg| \int_{ |x| = r } F(g) \cdot \nu_g \, d\sigma_g - \int_{ |x| = r } F ( \bar{g}_k )\cdot \nu_{ \bar{g}_k} \, d\sigma_{ \bar{g}_k} \bigg| \notag\\
&\le c \bigg( \| 1- u \|_{W_{ -\delta }^{3,p} (M \setminus B_r) }+\| g - \bar{g}_k \|_{W^{3,p}_{-1 -\delta } ( M \setminus B_r )} \bigg) r^b.
\end{align}
\end{itemize}
Then given $\epsilon > 0$, there exists $k_0$ such that  
$$
		\lim_{ r \rightarrow \infty } \int_{ |x| = r } F(g) \cdot \nu_g \,d \sigma_g = \lim_{ r \rightarrow \infty } \int_{ |x| = r} F( \bar{g}_k ) \cdot \nu_{\bar{g}_k } \,d \sigma_{\bar{g}_k } + \epsilon,  \quad \mbox{ for all } k > k_0.
$$

\end{lemma}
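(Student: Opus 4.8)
The plan is to isolate the limiting argument already used at the end of the proof of Theorem~\ref{DenThm2}. Write $\mathcal{F}(g)=\lim_{r\to\infty}\int_{|x|=r}F(g)\cdot\nu_g\,d\sigma_g$ and, likewise, $\mathcal{F}(\bar g_k)$. First I would check that these limits exist: by the divergence theorem, for $R_0<r<r'$,
$$
\int_{|x|=r'}F(g)\cdot\nu_g\,d\sigma_g-\int_{|x|=r}F(g)\cdot\nu_g\,d\sigma_g=\int_{r\le|x|\le r'}{\rm div}_g F(g)\,d{\rm vol}_g,
$$
and hypothesis~(\ref{Annulus}), applied on $M\setminus B_r$ and on $M\setminus B_{r'}$, bounds the right-hand side by $c\big(A_g(r)r^{-a}+A_g(r')r'^{-a}\big)$, where $A_g(r)$ abbreviates the bracketed combination of weighted Sobolev norms over $M\setminus B_r$ appearing in~(\ref{Annulus}). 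Since the norms of $g-\delta$, $\pi$, $g^{odd}$ and $\pi^{even}$ are finite by the hypotheses of Theorem~\ref{DenThm2}, $A_g(r)$ is finite and non-increasing in $r$; hence $\int_{|x|=r}F(g)\cdot\nu_g\,d\sigma_g$ converges as $r\to\infty$, $\mathcal{F}(g)$ exists, and letting $r'\to\infty$,
$$
\Big|\mathcal{F}(g)-\int_{|x|=r}F(g)\cdot\nu_g\,d\sigma_g\Big|=\Big|\int_{|x|\ge r}{\rm div}_g F(g)\,d{\rm vol}_g\Big|\le c\,A_g(r)\,r^{-a}.
$$
The identical reasoning for each $\bar g_k$ (valid whenever $k>r$) yields $\mathcal{F}(\bar g_k)$ together with $\big|\mathcal{F}(\bar g_k)-\int_{|x|=r}F(\bar g_k)\cdot\nu_{\bar g_k}\,d\sigma_{\bar g_k}\big|\le c\,A_{\bar g_k}(r)\,r^{-a}$.

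The next point, which is where the real content lies, is that $A_{\bar g_k}(r)$ is bounded \emph{uniformly in $k$} once $r$ is large. This follows from Theorem~\ref{DenThm2} and the estimates established in its proof: the weighted norms $\|\bar g_k\|_{W^{2,p}_{-\delta}}$, $\|\bar\pi_k\|_{W^{1,p}_{-1-\delta}}$, $\|\bar g_k^{odd}\|_{W^{2,p}_{-1-\delta}}$ and $\|\bar\pi_k^{even}\|_{W^{1,p}_{-2-\delta}}$, taken over $M\setminus B_r$, are all bounded independently of $k$ for $r\ge R$ with $R$ chosen suitably. (The control of $\bar g_k^{odd}$ and $\bar\pi_k^{even}$ uses the bootstrap estimates of Theorem~\ref{DenThm2} together with Corollary~\ref{CptSupp}, which confines the compactly supported corrections $h_k,q_k$ to a fixed ball so that they do not pollute the asymptotic region.) Consequently there is a constant $C_0$, independent of $k$, with $A_{\bar g_k}(r)\le C_0$ for all $r\ge R$ and all sufficiently large $k$.

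To finish, given $\epsilon>0$ I would fix $R$ (and the associated threshold on $k$) as above, then choose $r_\ast\ge R$ so large that $c\,A_g(r_\ast)\,r_\ast^{-a}<\epsilon/3$ and $c\,C_0\,r_\ast^{-a}<\epsilon/3$. For $k$ large with $k>r_\ast$, the triangle inequality and the first paragraph give
$$
|\mathcal{F}(g)-\mathcal{F}(\bar g_k)|\le\frac{2\epsilon}{3}+\Big|\int_{|x|=r_\ast}F(g)\cdot\nu_g\,d\sigma_g-\int_{|x|=r_\ast}F(\bar g_k)\cdot\nu_{\bar g_k}\,d\sigma_{\bar g_k}\Big|,
$$
and hypothesis~(\ref{Sphere}) bounds the last term by $c\,r_\ast^{\,b}$ times the sum of the weighted norms of $1-u_k$ and of $g-\bar g_k$ occurring in~(\ref{Sphere}); with $r_\ast$ now fixed, this tends to $0$ as $k\to\infty$ by the Density Theorem, so it is $<\epsilon/3$ for $k$ large. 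Hence $|\mathcal{F}(g)-\mathcal{F}(\bar g_k)|<\epsilon$ for all large $k$, which is the assertion. The main obstacle is the uniform-in-$k$ bound on $A_{\bar g_k}$ in the middle paragraph --- this is precisely what the Regge--Teitelboim refinement of Theorem~\ref{DenThm2} buys us; the only other thing to watch is the order of the quantifiers ($R$ and its threshold on $k$ first, then $r_\ast$, then the final $k$), and the rest is the divergence theorem and bookkeeping.
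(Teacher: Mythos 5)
Your proposal is correct and follows essentially the same route as the paper, which proves this lemma implicitly by the argument in part 2 of the proof of Theorem~\ref{DenThm2}: the three-term triangle inequality, with the two tail terms converted to annulus integrals by the divergence theorem and controlled by hypothesis~(\ref{Annulus}), the middle term controlled by hypothesis~(\ref{Sphere}), and the quantifiers ordered so that $r$ is fixed large first and $k$ is taken large afterwards. Your added care about the existence of the two limits and the uniform-in-$k$ bound on the weighted norms of $\bar g_k$, $\bar\pi_k$, $\bar g_k^{odd}$, $\bar\pi_k^{even}$ (via the bootstrap estimates and Corollary~\ref{CptSupp}) is exactly what the paper relies on, so no gap remains.
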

\begin{rmk}
If we replace the normal vectors and the volume forms of the integrals in the above assumptions by those with respect to the induced metric in the Euclidean space, the analogous result is the following: assume the following estimates hold
\begin{itemize}
\item[(1) ] 
\begin{align} \label{AnnulusE}
		&\bigg| \int_{ |x| \ge r } {\rm div}_{\delta} F(g) \,d{\rm vol}_0 \bigg| \le c \bigg(  \| \pi^{ even } \|_{ W^{1,p}_{ -2 - \delta } ( M \setminus B_r )} \| \pi \|_{ W^{1,p}_{ -1- \delta}( M \setminus B_r ) } \notag \\
		& \qquad \quad +  \| \pi \|^2_{ W^{1,p}_{ -1- \delta} ( M \setminus B_r )}\|g^{odd}\|_{ W^{2,p}_{ -1- \delta}( M \setminus B_r ) } \notag \\
		& \qquad \quad + \| g^{ odd } \|_{ W^{2,p}_{ -1- \delta}( M \setminus B_r ) }\| g - \delta \|_{ W^{2,p}_{ - \delta}( M \setminus B_r )} + \| g - \delta \|^2_{ W^{2,p}_{ - \delta} ( M \setminus B_r )} \bigg) r^{-a},
\end{align}
\item[(2) ] 
\begin{align} \label{SphereE}
		&\bigg| \int_{ |x| = r } F(g) \cdot \nu_0 \, d\sigma_0 - \int_{ |x| = r } F ( \bar{g}_k )\cdot \nu_0 \, d\sigma_0 \bigg|  \notag\\
		&\le c \bigg( \| 1- u \|_{W_{ -\delta }^{3,p} (M \setminus B_r) } + \| g - \bar{g}_k \|_{W^{3,p}_{-1 -\delta } ( M \setminus B_r )} \bigg) r^b.
\end{align}
\end{itemize}
Then given $\epsilon > 0$, there exists $k_0$ such that
$$
		\lim_{ r \rightarrow \infty } \int_{ |x| = r } F(g) \cdot \nu_0 \,d \sigma_0 = \lim_{ r \rightarrow \infty } \int_{ |x| = r} F( \bar{g}_k ) \cdot \nu_0 \,d \sigma_0 + \epsilon,  \quad \mbox{ for all } k > k_0.
$$
\end{rmk}


\section{Properties of the Intrinsics Definition}
Bartnik \cite[Section $4$]{Bartnik} proved that the ADM mass is a geometric invariant and satisfies natural properties. Along the same lines , we will show that the intrinsic definition of center of mass (\ref{CTM}) is well-defined and has corresponding change of coordinate under the transformation at infinity. We will first show that the intrinsic definition is robust in the sense that we can integrate over a general class of surfaces, and those integrals converge to the same vector. 


\begin{prop} \label{GeneSur}
Suppose $( M , g , K ) $ is AF-RT (\ref{AF-RT}). Let $\{ D_k \}_{k = 1}^{\infty} \subset M$ be closed sets such that the sets $ S_k = \partial D_k$ are connected two-dimensional $C^1$-surfaces without boundary which satisfy
\begin{align}
&\qquad    r_k = \inf \{ |x| : x\in S_k \} \rightarrow \infty \quad \mbox{as } k \rightarrow \infty, \label{BaC1}&\\
&\qquad    r_k^{-2} {\rm area}(S_k)  \mbox{ is bounded as }  k \rightarrow \infty,  \label{BaC2} &\\
&\qquad    {\rm vol}\{ D_k \setminus D_k^-\} = O( r_k ^{3^-}), \notag & \\
&\qquad \quad   \mbox{where } \,D_k^- = D_k \cap \{- D_k\} \mbox{ and  $3^-$ is a number less than $3$ }. \label{cond}&
\end{align}
Then the center of mass defined by 
\begin{align*}
		C_I^{\alpha} = \frac{1}{ 16\pi m} \lim_{ k \rightarrow \infty} \int_{ S_k } \big( R_{ij} - \frac{1}{2} R_g g_{ij} \big) Y_{(\alpha)}^i \nu_g^j \,d{\sigma}_g , \qquad \alpha = 1, 2, 3
\end{align*}
is independent of the sequence $\{ S_k \}$.
\end{prop}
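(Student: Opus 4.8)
The plan is to reduce the general statement to the special case of Euclidean coordinate spheres, which is exactly the definition (\ref{CTM}) and is known to converge. Given two sequences of surfaces $\{S_k\}$ and $\{S'_k\}$ satisfying (\ref{BaC1})--(\ref{cond}), it suffices to show that for large Euclidean radii $r$ one can interpolate: pick $r$ much larger than $r_k$ (the inner radius of $S_k$) and compare the integral over $S_k$ with the integral over the coordinate sphere $\{|x| = r\}$, using that $S_k$ and $\{|x|=r\}$ together bound a region $D$ contained in $\{|x| \ge r_k\}$. By the divergence theorem applied to the symmetric $2$-tensor $E_{ij} = R_{ij} - \tfrac12 R_g g_{ij}$ contracted with the conformal Killing field $Y_{(\alpha)}$,
\begin{align*}
		\int_{S_k} E_{ij} Y_{(\alpha)}^i \nu_g^j \, d\sigma_g - \int_{|x| = r} E_{ij} Y_{(\alpha)}^i \nu_g^j \, d\sigma_g = \pm\int_{D} \mathrm{div}_g\big( E_{ij} Y_{(\alpha)}^i \big) \, d\mathrm{vol}_g,
\end{align*}
and the point is that the divergence is small. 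I would expand $\mathrm{div}_g(E\,\lrcorner\,Y)$ using the contracted second Bianchi identity $\mathrm{div}_g E = 0$, so that only the term involving $\mathcal{L}_\delta Y$-type derivatives of $Y$ survives, together with the error between the Euclidean and the $g$-divergence (Christoffel symbols times $E$ times $Y$). Since $Y_{(\alpha)}$ is a \emph{Euclidean conformal} Killing field, $\mathcal{L}_\delta Y$ is pure trace, and contracting a trace against the trace-adjusted tensor $E$ produces exactly $x^\alpha R_g$; on a vacuum initial data set $R_g = |\pi|_g^2 - \tfrac12(\mathrm{Tr}_g\pi)^2 = O(|x|^{-4})$, which integrated against $|x|$ over $\{|x|\ge r_k\}$ is $O(r_k^{0^-}) \to 0$ by (\ref{BaC2}). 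The remaining Christoffel-type terms are schematically $|x|^2 |D^2 g|\,|Dg| + |x|\,|D^2 g|\,|g-\delta|$, each of which decays like $|x|^{-1-2\delta}$ or faster; integrated over $D \subset \{|x| \ge r_k\}$ using the volume bound (\ref{cond}) these also tend to zero.

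The role of the centric-symmetry hypothesis (\ref{cond}) is the essential subtlety, and I expect it to be the main obstacle. The naive bound above only gives $O(r_k^{1-2\delta})$, which for $\delta \in (\tfrac12, 1)$ already tends to zero --- but only if one is allowed to integrate over the \emph{full} exterior region $\{|x| \ge r_k\}$, whereas $D$ need only lie inside it and $S_k$ need not be centrically symmetric. The trick, exactly as in the proof of Theorem \ref{DenThm2}, is to split $D = D_k^- \cup (D \setminus D_k^-)$: on the centrically symmetric piece $D_k^-$ one pairs the point $x$ with $-x$ and extracts the odd parts $g^{odd}$, $\pi^{even}$ of the integrand, which by the AF-RT condition (\ref{AF-RT}) decay one power faster, making the integral over $D_k^-$ converge to zero even though $D_k^-$ may be large; on the leftover piece $D \setminus D_k^-$ one uses only the crude AF decay rates but now the domain has volume $O(r_k^{3^-})$ by (\ref{cond}), and crude decay ($|x|^{-3-2\delta}$-type integrands times the $|x|$ or $|x|^2$ weights from $Y$) against volume $O(r_k^{3^-})$ again gives a negative power of $r_k$.

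Having shown $\big|\int_{S_k} - \int_{|x|=r}\big| \to 0$ as $k \to \infty$ (uniformly for $r \ge r_k$, say), and likewise for $\{S'_k\}$, I let $k \to \infty$ and then $r \to \infty$: the coordinate-sphere integrals converge to $16\pi m\, C_I^\alpha$ by the convergence already asserted for (\ref{CTM}) under AF-RT, hence $\lim_k \int_{S_k} = \lim_k \int_{S'_k} = 16\pi m\, C_I^\alpha$, which is the claimed independence of the sequence. A final remark: the same computation shows the limit is unchanged if $\nu_g, d\sigma_g$ are replaced by their Euclidean counterparts $\nu_0, d\sigma_0$, since the discrepancy $E_{ij} Y^i (\nu_g^j d\sigma_g - \nu_0^j d\sigma_0)$ is controlled by $|x|^2 |D^2 g|\,|g - \delta|$ on $S_k$, which is $o(r_k^{-2})$ and integrates to zero against $\mathrm{area}(S_k) = O(r_k^2)$.
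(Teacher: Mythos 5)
Your proposal is correct and follows essentially the same route as the paper: the divergence theorem yields the bulk integrand $x^{\alpha}R_g$ plus Christoffel-type terms (the paper's formula (\ref{Ann})), and the intermediate region is split into a centrically symmetric part handled by the AF-RT odd/even pairing and a non-symmetric leftover handled by the volume-growth condition (\ref{cond}); the paper merely compares $S_k$ to a fixed inner surface $S_1$ rather than to large coordinate spheres. One small slip worth noting: under the pointwise decay (\ref{AF}) the crude bound on $\int_{|x|\ge r_k}|x|\,|R_g|\,d\mathrm{vol}$ is logarithmically divergent rather than $O(r_k^{0^-})$ (and (\ref{BaC2}) is an area bound, not a volume bound), so the symmetric pairing you invoke in your second paragraph is indispensable even on the symmetric piece, exactly as in the paper.
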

\begin{Remark}
The first two conditions (\ref{BaC1}), (\ref{BaC2}) on $S_k$ are the conditions considered by Bartnik \cite{Bartnik} to ensure the ADM mass is well-defined. The volume growth condition (\ref{cond}) allows us to consider a general class of surfaces which are roughly symmetric; that is,  the non-symmetric region $D_k \setminus D_k^-$ of $D_k$ has the volume growth slightly less than the volume growth of arbitrary regions in $M$. 
\end{Remark}
\begin{proof}
As in (\ref{Ann}), the divergence theorem gives us
\begin{align*}
		&\int_{ S_k } \big( R_{ij} - \frac{1}{2} R_g g_{ij} \big) Y_{(\alpha)}^i \nu_g^j \,d{\sigma}_g = \int_{ S_1 } \big( R_{ij} - \frac{1}{2} R_g g_{ij} \big) Y_{(\alpha)}^i \nu_g^j \,d{\sigma}_g\\
		 &+ \int_{D_k \setminus D_1} x^{\alpha} R_g + 2 x^{\alpha} (g^{ij} - \delta^{ij} ) \big( R_{ij} - \frac{1}{2} R_g g_{ij} \big) 
		+  \big( R_{ij} + \frac{1}{2} R_g g_{ij} \big)  Y_{(\alpha)}^k  \Gamma_{kj}^i \, d {\rm vol}_g.
\end{align*}
We can decompose the integral over $D_k \setminus D_1 = \{ D_k \setminus D_k^- \} \cup \{D_k^{-} \setminus D_1^+\} \cup \{ D_1^+ \setminus D_1\}$ into three integrals, where $D_1^+ = D_1 \cup \{ - D_1 \}$. The first integral over $\{ D_k \setminus D_k^- \}$ converges because of the volume growth condition (\ref{cond}). The second integral over $\{D_k^{-} \setminus D_1^+\}$ converges because the region is centrically symmetric and the initial data set is AF-RT. After taking limits, the right hand side has a limit independent of the the sequence $\{S_k\}$.

\end{proof}

Assume that $\{ x \}$ and $ \{  y \}$ are two AF coordinates on $ ( M \setminus B_{R_0}, g) $. Assume that $F$ is the transition function between these two coordinates and $y = F(x)$. It is shown that the only possible coordinate changes for AF manifolds at infinity are rotation and translation \cite[Corollary $3.2$]{Bartnik}. More precisely, there is a rigid motion of $\mathbb{R}^3, (\mathcal{O}^i_j, a) \in O( 3, \mathbb{R} ) \times \mathbb{R} $ so that 
$$
		\left| F(x) - ( \mathcal{O} x + a ) \right| \in W^{2, \infty}_{0} (\mathbb{R}^3 \setminus B_{R_0}).
$$
Similarly, if we use the same argument in that corollary for AF-RT manifolds, we derive 
$$
		F^{odd}(x) \in W^{2,\infty}_{-1}( \mathbb{R}^3 \setminus B_{R_0}).
$$
Because the center of mass is a quantity depending on the coordinates, we now show that centers of mass in $\{x\}$ and $\{y\}$ coordinates have the corresponding translation and rotation. An interesting phenomenon is that compared to rotation, translation is a more subtle rigid motion. If the translation $a$ is not zero, Density Theorem (Theorem \ref{DenThm2}) is involved in the proof.
\begin{thm} \label{ChangeCoord}
Let $\{ x \}$ and $ \{  y \}$ be two distinct AF-RT (\ref{AF-RT}) coordinates for $ (M, g, K)$ satisfying the change of coordinates as we described above. Assume that $ C_{I, x} $ and $ C_{I,y} $ are the centers of mass defined by the intrinsic definition (\ref{CTM}) in these two coordinates, then 
$$ 
		C_{I, y} = \mathcal{O} C_{I, x} + a.
$$ 
\end{thm}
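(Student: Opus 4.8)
The plan is to factor the coordinate change into a rotation, a translation, and a map asymptotic to the identity, and to treat these in increasing order of difficulty; as the statement anticipates, only the translation will require the Density Theorem.

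\textbf{Reduction.} Write $F(x)=\mathcal{O}x+a+E(x)$ with $E\in W^{2,\infty}_{0}$ and (by the AF-RT refinement) $E^{odd}\in W^{2,\infty}_{-1}$. I would first upgrade this to $E\in W^{2,\infty}_{-1}$: since $g$ is AF-RT in \emph{both} charts, the transformation law of $h^{odd}$ between the charts (the odd, $O(|x|^{-1})$ part of $\partial x/\partial y-\mathcal{O}^{-1}\sim-\mathcal{O}^{-1}(DE)\mathcal{O}^{-1}$ must be $O(|y|^{-2})$) forces $DE^{even}=O(|x|^{-2})$, so $E^{even}$ tends to a constant which is absorbed into $a$ (this normalized $a$ is the one in the statement). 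Then $F=T_{a}\circ\mathrm{Rot}_{\mathcal{O}}\circ H$ with $H(x)=x+\mathcal{O}^{-1}E(x)$, $\mathrm{Rot}_{\mathcal{O}}(z)=\mathcal{O}z$, $T_{a}(w)=w+a$, so it suffices to prove the formula for (i) $y=\mathcal{O}x$, (ii) $y=x+a$, and (iii) $y=x+\tilde E$ with $\tilde E\in W^{2,\infty}_{-1}$.

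\textbf{Easy cases.} In all cases I use that $R_{ij}-\tfrac12R_{g}g_{ij}$ is a tensor, so the boundary integral in $(\ref{CTM})$ with the fixed vector field $Y^{y}_{(\alpha)}$ over a fixed surface is coordinate‑free and may be evaluated in $x$-coordinates. For (i), direct substitution ($\partial_{y^{i}}=\mathcal{O}^{i}_{j}\partial_{x^{j}}$, $|y|=|x|$, $\mathcal{O}$ orthogonal) gives the exact identity $Y^{y}_{(\alpha)}=\mathcal{O}^{\alpha}_{\beta}Y^{x}_{(\beta)}$; since $\{|y|=r\}=\{|x|=r\}$ and $m$ is rotation invariant (Bartnik), $C_{I,y}=\mathcal{O}C_{I,x}$. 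For (iii), since $\tilde E=O(|x|^{-1})$ and $D\tilde E=O(|x|^{-2})$, expanding $|y|^{2}$, $y^{\alpha}$ and the Jacobian shows $Y^{y}_{(\alpha)}=Y^{x}_{(\alpha)}+V$ in $x$-coordinates with $V$ only $O(1)$. The Euclidean $x$-spheres satisfy $(\ref{BaC1})$--$(\ref{cond})$ for the $y$-chart ($|y|=|x|+O(|x|^{-1})$, and the $y$-reflection of $\{|x|\le r\}$ differs from it by an $O(r^{-1})$-thin shell), so Proposition \ref{GeneSur} lets us compute $C_{I,y}$ over $\{|x|=r\}$; then $\int_{|x|=r}(R_{ij}-\tfrac12R_{g}g_{ij})V^{i}\nu_{g}^{j}\,d\sigma_{g}=O(r^{-1})\to0$ and $C_{I,y}=C_{I,x}$.

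\textbf{Translation.} For (ii), expanding $Y^{y}_{(\alpha)}$ with $y=x+a$ in $x$-coordinates gives the clean decomposition
$$
Y^{y}_{(\alpha)}=Y^{x}_{(\alpha)}+a^{\alpha}\bigl(-2x^{i}\partial_{x^{i}}\bigr)+Z^{(\alpha)}+\bigl(|a|^{2}\delta^{\alpha i}-2a^{\alpha}a^{i}\bigr)\partial_{x^{i}},
$$
where $Z^{(\alpha)}=2(a\cdot x)\partial_{x^{\alpha}}-2x^{\alpha}(a\cdot\partial_{x})$ has antisymmetric Jacobian, hence is a Euclidean rotation field. Since $\{|x|\le r\}$ and its $y$-reflection $\{|x+2a|\le r\}$ differ by a crescent of volume $O(r^{2})$, Proposition \ref{GeneSur} again allows computing $C_{I,y}$ over $\{|x|=r\}$. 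Inserting the decomposition and using $(\ref{CTM})$ for the first term, the mass formula $(\ref{ADMmass})$ for the dilation term, and crude decay for the constant term yields
$$
16\pi m\,C_{I,y}^{\alpha}=16\pi m\,C_{I,x}^{\alpha}+16\pi m\,a^{\alpha}+\lim_{r\to\infty}\int_{|x|=r}\bigl(R_{ij}-\tfrac12R_{g}g_{ij}\bigr)(Z^{(\alpha)})^{i}\nu_{g}^{j}\,d\sigma_{g}.
$$
Thus the theorem reduces to: for AF-RT data and any Euclidean rotation field $Z$, one has $\lim_{r\to\infty}\int_{|x|=r}(R_{ij}-\tfrac12R_{g}g_{ij})Z^{i}\nu_{g}^{j}\,d\sigma_{g}=0$.

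\textbf{The crux: vanishing of the rotational flux, via the Density Theorem.} This is where a nonzero translation is genuinely harder than a rotation, and where Theorem \ref{DenThm2} enters. For data with harmonic asymptotics, $g_{ij}=(1+O(|x|^{-1}))\delta_{ij}$ with leading correction $\propto|x|^{-1}\delta_{ij}$, and a direct computation shows the leading ($O(|x|^{-2})$) part of the integrand, restricted to $\{|x|=r\}$, is a fixed multiple of $(3\omega_{i}\omega_{j}-\delta_{ij})Z^{i}\omega^{j}=2(Z\cdot\omega)$, $\omega=x/|x|$, which vanishes pointwise because $Z$ is tangent to round spheres; the remaining terms are $o(1)$, so the flux is $0$. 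For general AF-RT data, apply Theorem \ref{DenThm2} together with the convergence Lemma \ref{Conv} to the vector field $F(g)^{j}=g^{jk}(R_{ik}-\tfrac12R_{g}g_{ik})Z^{i}$: estimate $(\ref{Annulus})$ holds because $\mathrm{div}_{g}F(g)=\tfrac12\langle R_{ij}-\tfrac12R_{g}g_{ij},\mathcal{L}_{Z}h\rangle=O(|x|^{-4})$ with odd part $O(|x|^{-5})$, so over the centrically symmetric region $\{|x|\ge r\}$ the odd part integrates to zero and the remainder is $O(r^{-1})$, controllable by the stated weighted norms; estimate $(\ref{Sphere})$ follows from the conformal-change estimate exactly as in the proof of Theorem \ref{DenThm2}. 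Hence the flux of $g$ agrees with that of $\bar g_{k}$ up to $\epsilon$, which is $0$, and the proof is complete. The main obstacle is precisely this last step: there is no transparent geometric reason for the Einstein tensor's rotational flux to vanish, and establishing it is exactly what makes translations subtle.
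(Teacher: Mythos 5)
Your proposal is correct and follows essentially the same route as the paper: after substituting the coordinate change, the translation contributes a dilation term (yielding $16\pi m\,a^{\alpha}$ via the ADM mass formula (\ref{ADMmass})) plus a Euclidean rotation field, and the whole argument reduces to showing that the Einstein tensor's flux against that rotation field vanishes, which both you and the paper establish by passing to harmonic asymptotics via Theorem \ref{DenThm2} and Lemma \ref{Conv}. Your factorization of $F$ into a rotation, a translation, and a near-identity map is only an organizational variant of the paper's simultaneous expansion into the integrals $I_1$, $I_2$, $I_3$.
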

\begin{proof}
The metric $g$ in the cotangent spaces induced from $\{ x \}$ and $ \{  y \}$ can be written locally as
\begin{align*}
		ds^2 & = g_{kl}(x) dx^k dx^l = \widetilde{g}_{ij}(y) dy^i dy^j \\
		&= \widetilde {g}_{ij} \big( F(x) \big) d( \mathcal{O}^i_{k} x^k + a^i ) d( \mathcal{O}^j_{l} x^l + a^j) + O( |x|^{-1}).
\end{align*}
Expanding the above terms, we get  
$$
		g_{kl}(x) = \widetilde{g}_{ij} \big( F(x) \big) \mathcal{O}^i_k \mathcal{O}^j_l + e_1(x)
$$
where the error term $e_1 \in W^{1,\infty}_{-1}$ and $e_1^{odd} \in W^{1,\infty}_{-2}$. Then a straightforward calculation gives us 
$$
		\frac{ \partial^2 g_{kl} }{ \partial x^m \partial x^n} = \frac{\partial^2 \widetilde{g}_{ij}}{ \partial y^p \partial y^q} \mathcal{O}^i_k \mathcal{O}^j_l \mathcal{O}^p_m \mathcal{O}^q_n + e_2,
$$
and the formulas for Ricci and scalar curvatures, 
\begin{align*}
		& R_{kl}(x) = \widetilde{R}_{ij}\big(  F(x) \big) \mathcal{O}^i_k \mathcal{O}^j_l + e_3(x)\\
		& R(x) = \widetilde{R}\big(  F(x) \big) + e_4(x)
\end{align*}
where $e_q(x) \in O(|x|^{-4})$ and $ e_q^{odd} (x) = O( |x|^{-5}) $ for $q = 2,3,4.$ To calculate $C_{I,y}$, Proposition {\ref{GeneSur}} allows us to integrate over $\left\{| y -  a | = r\right\}$, 
$$
		C^{\alpha}_{I,y} = \frac{1 }{ 16 \pi m}\lim_{ r\rightarrow \infty }\int_{ |y - a| = r} \Big( \widetilde{R}_{ ij }(y) - \frac{1}{2} \widetilde{R}(y) \widetilde{g}_{ij}(y) \Big) \Big( |y|^2 \delta^{\alpha i } - 2 y^{\alpha} y^i\Big) \frac{ y^j - a^j}{ | y - a | } \, d\sigma_0.
$$
We would like to replace $ y = F(x) $ in the above identity. First we have
\begin{eqnarray*}
		 | y |^2 \delta^{\alpha i } - 2 y^{\alpha} y^i &=& \big( |x|^2 - 2 \mathcal{O}^{\alpha}_{\beta} x^{ \beta} \mathcal{O}^i_k x^k \big)\delta^{\alpha i} +  2 \mathcal{O}x \cdot a  \delta^{\alpha i } \\
		&& - 2\big( a^{\alpha} \mathcal{O}^i_k x^k + \mathcal{O}^{\alpha}_{\beta} x^{ \beta}  a^i \big) + O(1 ),\\
		\frac{ y^j - \alpha^j }{ | y - \alpha | } &=& \frac{ \mathcal{O}^j_l x^l}{ | x | }  + e_5,
\end{eqnarray*}
where $e_5 \in W^{2,\infty}_{-1}$ and $e_5^{odd} \in W^{2,\infty}_{-2}$. Using the facts that $\widetilde{R}_{ij}(F), \widetilde{R}(F)$ and $g(F)$ are asymptotically even 
\begin{align*}
		&\int_{ |y - a| = r} \Big( \widetilde{R}_{ ij }(y) - \frac{1}{2} \widetilde{R}(y) \widetilde{g}_{ij}(y) \Big) \Big( |y|^2 \delta^{\alpha i } - 2 y^{\alpha} y^i\Big) \frac{ y^j - a^j}{ | y - a | } \,d\sigma_0 \\
		=&\int_{ |x| = r} \Big( \widetilde{R}_{ ij }\big( F(x) \big) - \frac{1}{2} \widetilde{R} \big( F( x ) \big) \widetilde{g}_{ij}\big( F(x) \big) \Big) \Big( | x |^2 \delta^{\alpha i } 
		- 2 \mathcal{O}^{\alpha}_{\beta} x^{ \beta} \mathcal{O}^i_k x^k\Big) \frac{ \mathcal{O}^j_l x^l}{ | x | } \, d\sigma_0\\
		& +  \int_{ |x| = r} \Big( \widetilde{R}_{ ij }\big( F(x) \big)  - \frac{1}{2} \widetilde{R}\big( F(x) \big) \widetilde{g}_{ij}\big( F(x) \big) \Big) \Big(   - 2 a^{\alpha} \mathcal{O}^i_k x^k \frac{ \mathcal{O}^j_l x^l}{ | x | } \Big) \, d\sigma_0 \\
		&+  \int_{ |x| = r} \Big( \widetilde{R}_{ ij }\big( F(x) \big) - \frac{1}{2} \widetilde{R}\big( F(x) \big)  \widetilde{g}_{ij}\big( F(x) \big) \Big)
		\Big( 2 \mathcal{O}x \cdot a  \delta^{\alpha i }- 2 \mathcal{O}^{\alpha}_{\beta} x^{ \beta}  a^i )\Big) \frac{ \mathcal{O}^j_l x^l}{ | x | }\, d\sigma_0\\
		& + O( r^{-1}).
\end{align*}

We claim that the first integral $I_1$ converges to $ 16 \pi m \mathcal{O}^{\alpha}_{\beta} C^{\beta}_{I,x}$, the second integral $I_2$ converges to $16 \pi m a^{\alpha}$, and the third integral $I_3$ converges to $0$. 
\begin{align*}
		I_1 & = \int_{ |x| = r} \Big( \widetilde{R}_{ ij }\big( F(x) \big) - \frac{1}{2} \widetilde{R}\big( F(x) \big) \widetilde{g}_{ij}\big( F(x) \big)  \Big) \mathcal{O}^i_k\mathcal{O}^j_l \Big( | x |^2 \mathcal{O}^{\alpha}_k 
		- 2 \mathcal{O}^{\alpha}_{\beta} x^{ \beta} x^k\Big) \frac{  x^l}{ | x | } \,d\sigma_0\\
		& = \mathcal{O}^{\alpha}_{\beta}\int_{ |x| = r}  \Big( R_{ kl }( x ) - \frac{1}{2} R_g(  x ) g_{kl}( x) \Big) \Big( | x |^2 \delta^{\beta k}- 2 x^{ \beta} x^k\Big) \frac{  x^l}{ | x | } \,d\sigma_0 + O( r^{-1})\\
		& =16 \pi m  \mathcal{O}^{\alpha}_{\beta} C^{\beta}_{I,x} + O( r^{-1}). 
\end{align*}
By using the formula for ADM mass (\ref{ADMmass}), 
\begin{align*}
		I_2 &=a^{\alpha} \int_{ |x| = r} \Big( R_{ kl }( x ) - \frac{1}{2} R_g(  x ) g_{kl}( x)  \Big) \Big(   - 2  x^k \frac{  x^l}{ | x | } \Big) \,d\sigma_0+ O( r^{-1}) \\
		&= 16 \pi m a^{\alpha} + O ( r^{-1}).	
\end{align*}
To see that $I_3$ converges to $0$, we simplify the expression by letting $ b = \mathcal{O}^Ta$, and then
$$
		I_3  = \mathcal{O}^{\alpha}_{\beta} \int_{ |x| = r}  \Big( R_{ kl }( x ) - \frac{1}{2} R_g(  x ) g_{kl}( x) \Big) \Big( 2 x \cdot b \delta^{\beta k} - 2 x^{\beta} b^k \Big) \frac{ x^l }{ |x| } \,d\sigma_0.
$$
By the fact that $ \big( 2 x \cdot b \delta^{\beta k} - 2 x^{\beta} b^k \big) \cdot \frac{x^k } { |x| } = 0 $, the scalar curvature term vanishes after taking a limit, so we only need to prove
$$
		\mathcal{O}^{\alpha}_{\beta} \int_{ |x| = r}  R_{ kl }( x )\Big( 2 x \cdot b \delta^{\beta k} - 2 x^{\beta} b^k \Big) \frac{ x^l }{ |x| } \,d\sigma_0 = O( r^{-1}).
$$
It is sufficient to prove
$$
		\int_{ |x| = r}  R_{ kl }( x )\Big( 2 x \cdot b \delta^{\beta k} - 2 x^{\beta} b^k \Big) \nu_g^l \, d\sigma_g = O(r^{-1}).
$$
A straightforward calculation shows this identity holds if the metric is conformally flat outside a compact set, so we would like to approximate $g$ by $\bar{g}$ which have harmonic asymptotics, and then apply Lemma \ref{Conv}. We need to check if conditions (\ref{Annulus}) and (\ref{Sphere}) hold. Using the divergence theorem, we have
\begin{align*}
		&\int_{ |x| \ge r }  {\rm div}_g \left[ R_{ kl }( x )\big( 2 x \cdot b \delta^{\beta k} - 2 x^{\beta} b^k \big) \right] \, d {\rm vol}_g \\
		 = &\int_{ |x| \ge r } ( {\rm div}_g R_{kl} )\big( 2 x \cdot b \delta^{\beta k} - 2 x^{\beta} b^k \big) + \underbrace{R_{kl} \big(  2 b^l \delta^{\beta k} - 2 \delta^{\beta l } b^k \big)}_{\mbox{ sum to }= 0} \, d{\rm  vol}_g\\
		 &+\int_{ |x| \ge r }   R_{kl } \big( 2 x \cdot b \delta^{\beta k} - 2 x^{\beta} b^k \big)  g^{ln}\Gamma_{mn}^m \, d {\rm vol}_g.
\end{align*}
Using the second Bianchi identity to the first term on the right hand side, symbolically, the above integral is bounded by
\begin{align*}
		&\left| \int_{ |x| \ge r } {\rm div}_g \left[ R_{ kl }( x )\big( 2 x \cdot b \delta^{\beta k} - 2 x^{\beta} b^k \big) \right] \, d {\rm vol}_g  \right| \\
		&\le c\Big( \| \pi\|_{W^{1, p}_{-1-\delta} ( M \setminus B_{r} )} \| \pi^{even} \|_{W^{1,p}_{ -2 - \delta } ( M \setminus B_{r} ) } + \| g\|^2_{W^{2,p}_{-\delta}(M \setminus B_r)} \Big) r^{1 - 2 \delta}.
\end{align*}
Hence condition (\ref{Annulus}) holds. Condition (\ref{Sphere}) holds because the Sobolev inequality implies
\begin{align*}
		&\int_{ |x| = r}  R_{ kl }( x )\left( 2 x \cdot b \delta^{\beta k} - 2 x^{\beta} b^k \right) \nu_g^l \, d\sigma_g - \int_{ |x| = r}  \bar{R}_{ kl }( x )\left( 2 x \cdot b \delta^{\beta k} - 2 x^{\beta} b^k \right) \nu_{\bar{g}}^l \, d\sigma_{\bar{g}} \\
		& \le c \left( \| 1 - u \|_{W^{3,p}_{-\delta} (M \setminus B_r)} + \| g - \bar{g} \|_{ W^{3,p}_{-\delta}(M \setminus B_r) } \right) r^{1-\delta}.
\end{align*}
By Lemma \ref{Conv}, we conclude 
\begin{align*}
		\lim_{ r \rightarrow \infty } \int_{ |x| = r}  R_{ kl }( x )\Big( 2 x \cdot b \delta^{\beta k} - 2 x^{\beta} b^k \Big) \nu_g^l \, d\sigma_g \\
		= \lim_{ r \rightarrow \infty } \int_{ |x| = r}  \bar{R}_{ kl }( x )\Big( 2 x \cdot b \delta^{\beta k} - 2 x^{\beta} b^k \Big) \nu_{\bar{g}}^l \,d\sigma_{\bar{g}} + \epsilon = \epsilon.
\end{align*}
Since $\epsilon$ can be chosen arbitrarily small, $I_3$ converges to $0$. Therefore, 
$$
		C^{\alpha}_{I, y} = \frac{1 }{ 16 \pi m} \lim_{r \rightarrow \infty} (I_1 + I_2 + I_3) = \mathcal{O}^{\alpha}_{ \beta} C^{\beta}_{I,x} + a^{\alpha}.
$$

\end{proof}

 
\section{ The Corvino--Schoen Center of Mass }
Corvino and Schoen \cite{CS} defined the center of mass (\ref{CS}) for AF-RT manifolds. In this section, we will show that the intrinsic definition we propose is actually equal to the Corvino--Schoen definition (Theorem {\ref{CorvinoSchoen}}). In other words, the intrinsic definition (\ref{CTM}) is a coordinate-free expression of the Corvino--Schoen definition. 

Assume $\bar{g}$ is the approximating solution in Theorem {\ref{DenThm2}} (Density Theorem). A straightforward calculation shows $C_I (\bar{g}) = C_{CS}( \bar{g})$. As proven in Theorem {\ref{DenThm2}}, $C_I(g)$ can be approximated by $C_I ( \bar{g} )$. If we prove that $C_{CS} (g) $ can also be approximated by $C_{CS} (\bar{g})$, then Theorem {\ref{CorvinoSchoen}} follows.

\begin{proof}[Proof of Theorem 1]
First notice that the Corvino--Schoen definition is equal to the following expression where the normal vector and the volume form are with respect to the induced metric in the Euclidean space
\begin{align*}
	C_{CS}^{\alpha} =\frac{1}{ 16 \pi m} \left[ \lim_{ r \rightarrow \infty} \int_{ |x| = r}  x^{\alpha} \sum_{i,j} ( g_{ij,i} - g_{ii,j} ) \nu_0^j\,d\sigma_0\right.\\
	\left. - \int_{ |x| = r}  \sum_i ( h_{i\alpha} \nu_0^i - h_{ii} \nu_0^{\alpha} ) \,d\sigma_0 \right]. 
\end{align*}
A direct calculation shows
\begin{align*}
		 &\left[\int_{ |x| = r_2} x^{\alpha} \sum_{i,j} ( g_{ij,i} - g_{ii,j} ) \nu_0^j  \, d\sigma_0 -\int_{ |x| = r_2} \sum_i (  h_{i\alpha} \nu_0^i - h_{ii} \nu_0^{\alpha}) \, d\sigma_0\right] \\
		 &\quad - \left[ \int_{ |x| = r_1}  x^{\alpha} \sum_{i,j} ( g_{ij,i} - g_{ii,j} ) \nu_0^j\, d\sigma_0 - \int_{ |x| = r_1}  \sum_i ( h_{i\alpha} \nu^i - h_{ii} \nu_0^{\alpha} ) \, d\sigma_0 \right] \\
		 &= \int_{ r_1 \le |x| \le r_2  } x^{\alpha} \sum_{i,j} (g_{ij,ij} - g_{ii,jj}) \,d {\rm vol}_0
\end{align*}
It is easy to see that the conditions (\ref{AnnulusE}) and (\ref{SphereE}) in Lemma \ref{Conv} (and the remark afterward) hold because $\sum_{i,j} (g_{ij,ij} - g_{ii,jj})$ is the leading order term of $R_g$, and by the constraint equation (\ref{PiCE}),
\begin{align*}
		&\left| \int_{  |x| \ge r } x^{\alpha} \sum_{i,j}(g_{ij,ij} - g_{ii,jj}) \,d{\rm vol}_0 \right| \le c \left( \| \pi \|_{W^{1,p}_{-1 - \delta} ( M \setminus B_r)} \| \pi^{even} \|_{W^{1,p}_{-2 - \delta}( M \setminus B_r)} \right. \\
		& \qquad \left. + \| g^{odd} \|_{W^{2,p}_{-1-\delta} (M \setminus B_r)} \| g - \delta \|_{W^{2,p}_{-\delta}(M \setminus B_r)}\right) r^{ 1 - 2 \delta}
\end{align*}
and
\begin{align*}
		 &\left| \bigg( \int_{ |x| = r}  x^{\alpha} \sum_{i,j} ( g_{ij,i} - g_{ii,j} ) \nu_0^j\,d\sigma_0 - \int_{ |x| = r}  \sum_i ( h_{i\alpha} \nu_0^i - h_{ii} \nu_0^{\alpha}) \,d\sigma_0 \bigg) \right. \\
		&-  \left.\bigg(\int_{ |x| = r}  x^{\alpha} \sum_{i,j} ( \bar{g}_{ij,i} - \bar{g}_{ii,j} ) \nu_0^j\,d\sigma_0 - \int_{ |x| = r}  \sum_i ( \bar{h}_{i\alpha} \nu_0^i - \bar{h}_{ii} \nu_0^{\alpha}) \,d\sigma_0\bigg) \right| \\
		&\le c \| g - \bar{g} \|_{ W^{3, p }_{ - \delta} ( M \setminus B_r) } r^{2 - \delta}.  
\end{align*}
Then we can apply Lemma \ref{Conv} and have
\begin{align*}
		C^{\alpha}_{CS} &=\frac{1}{16 \pi m}  \lim_{ r \rightarrow \infty} \left[ \int_{ |x| = r}  x^{\alpha} \sum_{i,j} ( g_{ij,i} - g_{ii,j} ) \nu_0^j\,d\sigma_0 - \int_{ |x| = r}  \sum_i ( h_{i\alpha} \nu_0^i - h_{ii} \nu_0^{\alpha}) \,d\sigma_0\right] \\
		&=\frac{1}{16 \pi m}  \lim_{ r \rightarrow \infty} \left[ \int_{ |x| = r}  x^{\alpha} \sum_{i,j} ( \bar{g}_{ij,i} - \bar{g}_{ii,j} ) \nu_0^j\,d\sigma_0 - \int_{ |x| = r}  \sum_i ( \bar{h}_{i\alpha} \nu_0^i - \bar{h}_{ii} \nu_0^{\alpha}) \,d\sigma_0\right] + \epsilon \\
		& = C_{CS}^{\alpha} ( \bar{g})+ \epsilon.
\end{align*}
\end{proof}

\section{The Huisken--Yau Center of Mass}
In the case that $(M, g, K)$ is SAF (\ref{SAF}), Huisken--Yau \cite{HY}  and Ye \cite{Y} proved the existence and uniqueness of the constant mean curvature foliation in the exterior region of $M$, if $m > 0$. Huisken and Yau used the volume preserving mean curvature flow to evolve each Euclidean sphere centered at the origin with a large radius and they showed that the Euclidean sphere converges to a surface with constant mean curvature. Furthermore, they proved those surfaces with constant mean curvature are approximately round, and their approximate centers converge as per equation \eqref{HY}:
\begin{align}
		C^{\alpha}_{HY}=\lim_{r \rightarrow \infty} \frac{ \int_{M_r } z^{\alpha } \,d\sigma_0 }{ \int_{ M_r } \, d\sigma_0 }, \quad \alpha = 1,2,3 \tag{1.7}
\end{align}
where $\{ M_r \}$ are leaves of the foliation and $z$ is the position vector. They define this to be the center of mass for $M$. Instead of using volume preserving mean curvature flow, Ye used the implicit function theorem to find a surface with constant mean curvature which is perturbed in the normal direction away from the Euclidean sphere $S_R(p)$ for some suitable center $p$ and for the radius $R$ large. Although it is not emphasized in his paper, $p$ can represent the center for each constant mean curvature surface because this surface is roughly round. Furthermore, we will show that $p $ converges to $C_{CS}$ and use this fact to prove $C_{HY}= C_{CS} $. Then $C_I = C_{HY}$ follows because the Corvino--Schoen center of mass $C_{CS}$ is equal to the intrinsic definition $C_I$ by Theorem \ref{CorvinoSchoen}. Before we prove Theorem \ref{HuiskenYau}, we will need a technical lemma (Lemma \ref{Tech}) which suggests that the center of the surface $p$ converges to $C_{CS}$.

Let $\nu$ be the outward unit normal vector field on $S_R (p) $ with respect to the metric $g$. As $R$ varies, $\nu$ is well-defined in a tabular neighborhood of $S_R(p)$. Therefore, the mean curvature at $y \in S_R(p)$ is ${\rm div}_{S_R(p)} \nu = \textup{div}_g \nu$, the divergence operator of the ambient manifold $M$. Recall that $\displaystyle g = \left( 1 + \frac{2m}{|y|} \right) \delta_{ij} + p_{ij}$. At $y$, 
\begin{align*}
		\nu &= \frac{ \nabla | y - p |  }{ \big| \nabla | y - p | \big|_g} \\
		&=  \bigg( 1 - \frac{ m }{ |y| } + \frac{ 3 m^2 }{  2 |y|^2} + \frac{1}{2} p_{qr} \frac{ (y^q - p^q )( y^r - p^r ) }{ | y - p |^2 }\bigg)\frac{ y^l - p^l }{ | y - p| } \frac{ \partial }{ \partial y^l} \\
		&\quad - p_{kl} \frac{ y^k - p^k }{ | y - p| } \frac{ \partial }{ \partial y^l} + O( R^{-3} ).
\end{align*}
A straightforward calculation gives us the mean curvature on $S_R(p)$ is equal to
\begin{eqnarray}
		  {\rm div}_{g} \nu &=&\frac{2}{ | y - p|} - \frac{ 4m }{ | y - p |^2  } + \frac{ 6 m (y-p) \cdot p  }{  | y -  p|^4} + \frac{ 9 m^2 }{ | y - p |^3 } \nonumber \\ 
  &&+ \frac{1}{2} p_{ij,k}(y) \frac{ ( y^i - p^i )(y^j - p^j) (y^k - p^k)  }{ | y -  p|^3 } + 2 p_{ij}(y) \frac{ ( y^i - p^i )(y^j - p^j ) }{ | y - p |^3 } \nonumber \\ 
  &&- p_{ij,i}(y) \frac{ y^j - p^j }{ | y - p | } - \frac{ p_{ii}(y)}{ | y -  p| } + \frac{1}{2} p_{ii,j}(y) \frac{ y^j- p^j }{ | y-  p | } + E_0, \label{MC}
\end{eqnarray}
where $ | E_0 | \le \frac{c}{R^4}\big(1 + |p| \big)$ for some constant $c$ depending only on the metric $g$.

\begin{lemma} \label{Tech}
For $R$ large, 
\begin{eqnarray} \label{pterm}
		&&\int_{ | y - p| = R} ( y^{ \alpha } - p^{ \alpha } ) \bigg(  \frac{1}{2} p_{ij,k} \frac{ ( y^i - p^i )(y^j - p^j) (y^k - p^k)  }{ | y -  p|^3 } \bigg) \, d\sigma_0 \nonumber \\
		&&+ \int_{ | y - p| = R} ( y^{ \alpha } - p^{ \alpha } ) \bigg(  2 p_{ij} \frac{ ( y^i - p^i )(y^j - p^j ) }{ | y - p |^3 } \bigg) \, d \sigma_0 \nonumber\\
		&&+ \int_{ | y - p| = R} ( y^{ \alpha } - p^{ \alpha } ) \bigg( - p_{ij,i} \frac{ y^j - p^j }{ | y - p | } - \frac{ p_{ii}}{ | y -  p| } + \frac{1}{2} p_{ii,j} \frac{ y^j- p^j }{ | y-  p | } \bigg) \, d \sigma_0 \nonumber\\
 		&= & - 8 m \pi C_{CS}^{\alpha} + O(R^{-1}), \qquad \qquad \alpha= 1,2,3. 
\end{eqnarray}
\end{lemma}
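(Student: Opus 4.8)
The plan is to show that the three surface integrals on the left of (\ref{pterm}) together reproduce $-\tfrac12$ of the Corvino--Schoen boundary integral (\ref{CS}) evaluated at radius $R$, up to an error of size $O(R^{-1})$; since that boundary integral converges to $16\pi m\,C_{CS}^{\alpha}$ at the same rate, the estimate (\ref{pterm}) follows. I would carry this out in three steps.

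\textbf{Reducing $C_{CS}$ in the SAF case.} Write $h_{ij}=g_{ij}-\delta_{ij}=\tfrac{2m}{|x|}\delta_{ij}+p_{ij}$. As in the proof of Theorem~\ref{CorvinoSchoen}, one may use the Euclidean normal and area form in (\ref{CS}); both integrands are then linear in $h$, and a direct computation shows that the spherically symmetric piece $\tfrac{2m}{|x|}\delta_{ij}$ produces integrands that are odd in $x$ on $\{|x|=r\}$ and hence contributes nothing. Therefore
\[
		16\pi m\,C_{CS}^{\alpha}=\lim_{r\to\infty}\Big(\int_{|x|=r} x^{\alpha}(p_{ij,i}-p_{ii,j})\nu_0^{j}\,d\sigma_0-\int_{|x|=r}(p_{i\alpha}\nu_0^{i}-p_{ii}\nu_0^{\alpha})\,d\sigma_0\Big)=:\lim_{r\to\infty}\mathcal{C}^{\alpha}(r).
\]
A divergence-theorem computation gives $\mathcal{C}^{\alpha}(r_2)-\mathcal{C}^{\alpha}(r_1)=\int_{r_1\le|x|\le r_2}x^{\alpha}(p_{ij,ij}-p_{ii,jj})\,d\mathrm{vol}_0$, and $p_{ij,ij}-p_{ii,jj}$ differs from $R_g$ by a term $Q$ quadratic in the metric. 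Using the centric symmetry of the annulus, that $x^{\alpha}$ is odd, and that both $R_g^{odd}=O(|x|^{-5})$ and $Q^{odd}=O(|x|^{-5})$ (these follow from the constraint (\ref{PiCE}) together with the Regge--Teitelboim decay), one gets $\mathcal{C}^{\alpha}(R)=16\pi m\,C_{CS}^{\alpha}+O(R^{-1})$, exactly as in the argument behind Lemma~\ref{Conv}.

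\textbf{Moving to the centered sphere.} Next I would set $z=y-p$, turning the integrals in (\ref{pterm}) into integrals over $\{|z|=R\}$. Since $|p|$ stays bounded as $R\to\infty$ (this is the regime of the constant mean curvature surfaces used in the proof of Theorem~\ref{HuiskenYau}), and $p_{ij}=O(|x|^{-2})$, $p_{ij,k}=O(|x|^{-3})$, replacing $p_{ij}(z+p)$ and $p_{ij,k}(z+p)$ by $p_{ij}(z)$ and $p_{ij,k}(z)$ costs only $O(R^{-1})$. Thus the left side of (\ref{pterm}) equals
\[
		\int_{|z|=R}z^{\alpha}\Big(\tfrac12 p_{ij,k}\tfrac{z^i z^j z^k}{|z|^{3}}+2p_{ij}\tfrac{z^i z^j}{|z|^{3}}-p_{ij,i}\tfrac{z^j}{|z|}-\tfrac{p_{ii}}{|z|}+\tfrac12 p_{ii,j}\tfrac{z^j}{|z|}\Big)\,d\sigma_0+O(R^{-1}),
\]
with all $p_{ij}$ evaluated at $z$; note the bracket is precisely the part of the mean curvature expansion (\ref{MC}) that is linear in $p_{ij}$.

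\textbf{The main step.} It then remains to show the last display equals $-\tfrac12\,\mathcal{C}^{\alpha}(R)+O(R^{-1})$. Subtracting $-\tfrac12$ of the integrand of $\mathcal{C}^{\alpha}$ from the integrand above leaves
\[
		\tfrac12 z^{\alpha}p_{ij,k}\tfrac{z^i z^j z^k}{|z|^{3}}+2z^{\alpha}p_{ij}\tfrac{z^i z^j}{|z|^{3}}-\tfrac12 z^{\alpha}p_{ij,i}\tfrac{z^j}{|z|}-\tfrac12\tfrac{z^{\alpha}p_{ii}}{|z|}-\tfrac12 p_{i\alpha}\tfrac{z^i}{|z|},
\]
and one must check this integrates to $O(R^{-1})$ over $\{|z|=R\}$. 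Here I would integrate by parts on the sphere, using $z^{k}p_{ij,k}=|z|\,\partial_r p_{ij}$ to trade the differentiated terms against the undifferentiated ones; the pieces that are total derivatives on $S_R$ drop out, and the coefficients in (\ref{MC}) are arranged so that what remains is $O(|z|^{-3})$ pointwise, hence $O(R^{-1})$ after integration. The hard part will be this cancellation, since it is the only place where the specific numbers in the mean curvature formula enter; a convenient way to organize it is via the standard identity (an integration by parts, valid modulo decaying bulk terms) relating $\int_{S_R}z^{\alpha}(\text{linear-in-}p\text{ part of the mean curvature})\,d\sigma_0$ to the Corvino--Schoen boundary integrand, compare \cite{CW}. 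Combining the three steps, the left-hand side of (\ref{pterm}) equals $-\tfrac12\cdot16\pi m\,C_{CS}^{\alpha}+O(R^{-1})=-8m\pi\,C_{CS}^{\alpha}+O(R^{-1})$, which is the claim.
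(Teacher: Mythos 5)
Your overall strategy -- showing the left side of (\ref{pterm}) equals $-\tfrac12$ of the Corvino--Schoen boundary integral at radius $R$ up to $O(R^{-1})$ -- is exactly the paper's, and your first two steps (dropping the conformal part of $h_{ij}$, recentering at $p$) are fine. But the ``main step'' is where all the content of the lemma lives, and the method you propose for it would fail. The leftover integrand
$$
\tfrac12 z^{\alpha}p_{ij,k}\tfrac{z^i z^j z^k}{|z|^{3}}+2z^{\alpha}p_{ij}\tfrac{z^i z^j}{|z|^{3}}-\tfrac12 z^{\alpha}p_{ij,i}\tfrac{z^j}{|z|}-\tfrac12\tfrac{z^{\alpha}p_{ii}}{|z|}-\tfrac12 p_{i\alpha}\tfrac{z^i}{|z|}
$$
is genuinely $O(|z|^{-2})$ pointwise (each term is $|z|\cdot O(|z|^{-3})$), and no algebraic rearrangement makes it $O(|z|^{-3})$: test it on $p_{ij}=a_{ij}|z|^{-2}$ with $a$ constant. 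Moreover, tangential integration by parts on $S_R$ cannot touch the term $z^{k}p_{ij,k}=|z|\,\partial_r p_{ij}$, since the radial derivative is transverse to the sphere; to trade it against undifferentiated terms you are forced into the bulk. That is what the paper does: writing $\mathcal{I}(R)$ for the first integral, a divergence-theorem computation on the annulus $\{R\le|y-p|\le R_1\}$ shows $\mathcal{I}(R_1)-\mathcal{J}(R_1)=\mathcal{I}(R)-\mathcal{J}(R)$ for an explicit boundary functional $\mathcal{J}$ built from the undifferentiated terms, i.e. the quantity is independent of $R$.

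Even granting that identity, you still must evaluate the constant $\lim_{R_1\to\infty}\bigl(\mathcal{I}(R_1)-\mathcal{J}(R_1)\bigr)$, and this is the step your proposal has no replacement for. Both $\mathcal{I}(R_1)$ and $\mathcal{J}(R_1)$ are $O(1)$ surface integrals whose limits are \emph{not} determined by the decay $p_{ij}=O(|z|^{-2})$ or by Regge--Teitelboim parity alone -- the leading $|z|^{-2}$ part of $p_{ij}$ is an essentially arbitrary tensor on the sphere. The paper pins the limit down to $0$ by invoking the Density Theorem (Theorem \ref{DenThm2}) together with Lemma \ref{Conv}: one approximates $(g,\pi)$ by data with harmonic asymptotics, for which $\bar p_{ij}^{\,odd}=\tfrac{c\cdot y}{|y|^3}+O(|y|^{-3})$ is explicit and the limit can be computed directly. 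Your appeal to ``the standard identity\dots compare \cite{CW}'' does not fill this gap, since (as noted in the introduction) the Corvino--Wu computation is special to conformally flat asymptotics and does not extend to the general SAF/AF-RT setting. Without the density argument (or an equivalent evaluation of the boundary term at infinity), the claimed cancellation in your main step is unproved.
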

\begin{proof}[Proof of Lemma \ref{Tech}]
We denote the first integral by, for  $\alpha = 1, 2, 3$,
$$
				\mathcal{I}^{\alpha} (R)  =  \int_{ | y - p| = R} ( y^{ \alpha } - p^{ \alpha } ) \bigg(  \frac{1}{2} p_{ij,k} \frac{ ( y^i - p^i )(y^j - p^j) (y^k - p^k)  }{ | y -  p|^3 } \bigg) \, d \sigma_0.
$$
In the proof, we will rewrite $\mathcal{I}^{\alpha} (R) $, and then some cancellation allows us to rearrange the left hand side of (\ref{pterm}) so that it has an expression corresponding to $C_{CS}^{\alpha}$. 

Since the coordinate is only defined outside a compact set of $M$, we can use the divergence theorem only  in the annular region $A = \{ R \le | y - p | \le R_1\}$,
\begin{eqnarray*}
		\mathcal{I}^{\alpha}(R_1) &=& \mathcal{I}^{\alpha} (R) + \frac{1}{2} \int_A  \bigg( p_{ij,k} \frac{ ( y^j - p^j ) ( y^k - p^k ) ( y^{\alpha} - p^{\alpha} ) }{ | y - p |^2 }\bigg)_{,i} \, d {\rm vol}_0\\
		&=& \mathcal{I}^{\alpha} (R) + \frac{1}{2} \int_A \bigg( p_{ij,i} \frac{ ( y^j - p^j ) ( y^k - p^k ) ( y^{\alpha} - p^{\alpha} ) }{ | y - p |^2 } \bigg)_{,k} \, d {\rm vol}_0 \nonumber \\
		&& -  \frac{1}{2} \int_A p_{ij,i} \bigg( \frac{ ( y^j - p^j ) ( y^k - p^k ) ( y^{\alpha} - p^{\alpha} ) }{ | y - p |^2 } \bigg) _{,k} \, d {\rm vol}_0 \nonumber\\
		&&  + \frac{1}{2} \int_A p_{ij,k} \bigg( \frac{ ( y^j - p^j ) ( y^k - p^k ) ( y^{\alpha} - p^{\alpha} ) }{ | y - p |^2 } \bigg) _{,i} \, d {\rm vol}_0. 
\end{eqnarray*}
Using the divergence theorem and simplifying the expression, we get an identity containing purely boundary terms
$$
		\mathcal{I}^{\alpha}(R_1) = \mathcal{I}^{\alpha} (R) + \mathcal{J}^{\alpha} (R_1) - \mathcal{J}^{\alpha}(R),  \quad \mbox{ for all } R_1 > R
$$
where 
\begin{eqnarray*}
		\mathcal{J}^{\alpha} (R) &=& \frac{1}{2} \int_{ | y - p | = R} ( y^{\alpha} - p^{\alpha} ) p_{ij,i} \frac{ y^j - p^j }{ | y - p| } \,d \sigma_0 \\
		&&- 2 \int_{ | y - p | = R} ( y^{\alpha} - p^{\alpha} ) p_{ij}\frac{ ( y^i - p^i)( y^j - p^j )  }{ | y - p |^3} \,d \sigma_0\\
		&&+ \frac{1}{2} \int_{ | y - p | = R} p_{ii} \frac{ y^{ \alpha } - p^{ \alpha }  }{ | y - p |} d \sigma_0 + \frac{1}{2} \int_{ | y - p |= R} p_{i \alpha } \frac{ y^i - p^i }{ | y - p |} \,d \sigma_0. 
\end{eqnarray*}
To prove that $\mathcal{I}^{\alpha}(R)= \mathcal{J}^{\alpha}(R)$, we would like to apply Lemma \ref{Conv}. It is easy to check that the conditions (\ref{AnnulusE}) and (\ref{SphereE}) hold, so we get, for $\alpha = 1,2,3$, 
\begin{eqnarray*}
		\mathcal{ I }^{\alpha} (R) - \mathcal{J}^{\alpha} (R) = \lim_{R_1 \rightarrow \infty} (\mathcal{I}^{\alpha} (R_1) -	 \mathcal{J}^{\alpha} (R_1 ) )= \lim_{R_1 \rightarrow \infty} (\mathcal{I}^{\alpha}_{\bar{p}} (R_1) - \mathcal{J}^{\alpha}_{\bar{p} } (R_1))+ \epsilon,
\end{eqnarray*}
where $\mathcal{I}^{\alpha}_{\bar{p} } $ and $ \mathcal{J}^{\alpha}_{\bar{p} }$ denote the integrals that we obtain by replacing $p_{ij}$ by $\bar{p}_{ij}$ in $\mathcal{ I } $ and  $\mathcal{J}^{\alpha} $, where $\bar{p}_{ij}$ are $O(|y|^{-2})$-terms in the approximating solutions $\bar{g}_{ij}$ in Theorem \ref{DenThm2} (Density Theorem). More precisely, $\bar{g}_{ij}$ has this expansion
$$
		\bar{g}_{ij} = \Big( 1 + \frac{a }{ | y |}\Big) \delta_{ij} + O(|y|^{-2}), 
$$
and $\bar{p}_{ij}$ is defined by
$$
		\bar{p}_{ij} = \bar{g}_{ij} - \Big( 1 + \frac{a }{ | y |}\Big) \delta_{ij}.
$$
Because $\bar{p}_{ij}^{odd} = \frac{ c \cdot y}{ |y|^3} + O( |y|^{-3})$ as in (\ref{Asym2}), it is easy to see that
$$
		\lim_{R_1 \rightarrow \infty} (\mathcal{I}^{\alpha}_{\bar{p} } (R_1) - \mathcal{J}^{\alpha}_{\bar{p} } (R_1) )= 0.
$$
Therefore, we conclude $ \mathcal{ I }^{\alpha} (R) = \mathcal{J}^{\alpha} (R) $. We then replace $\mathcal{ I }^{\alpha} (R) $ by $\mathcal{J}^{\alpha} (R) $ in the identity (\ref{pterm}) and derive that the left hand side is equal to
\begin{eqnarray*}
		&&- \frac{1}{2} \left(  \int_{ | y - p| = R}  (y^{\alpha} - p^{\alpha} ) ( p_{ij,i} - p_{ii,j} )\frac{ y^j - p^j }{ | y - p | } \, d\sigma_0 \right.\\
		&&\qquad  \qquad \qquad - \left. \int_{ | y - p| = R}  p_{i\alpha} \frac{ y^i - p^i }{ | x - p |}- p_{ii} \frac{ y^{\alpha} - p^{ \alpha} }{ | x - p |} \,d\sigma_0 \right).
\end{eqnarray*}
We rewrite the above integrals into the summation of the center of mass (\ref{CS}) and the remainder terms. Then explicit calculations yield 
\begin{align*}
		  &- \frac{1}{2} \left[  \int_{ | y - p| = R}  (y^{\alpha} - p^{\alpha} ) ( g_{ij,i} - g_{ii,j} )\frac{ y^j - p^j }{ | y - p | } \,d\sigma_0\right.\\
		  & \qquad \qquad - \left.\int_{ | y - p| = R}  g_{i\alpha} \frac{ y^i - p^i }{ | x - p |}- g_{ii} \frac{ y^{\alpha} - p^{ \alpha} }{ | x - p |} \,d\sigma_0 \right]\\
		 & + \frac{1}{2} \left[  \int_{ | y - p| = R}  (y^{\alpha} - p^{\alpha} ) \left(( g_{ij,i}-p_{ij,i} )- ( g_{ii,j} -p_{ii,j} )\right)\frac{ y^j - p^j }{ | y - p | } \,d\sigma_0 \right] \\
		 & - \frac{1}{2} \left[   \int_{ | y - p| = R}  ( g_{i \alpha } -p_{i\alpha} )\frac{ y^i - p^i }{ | x - p |}- ( g_{ii} - p_{ii}) \frac{ y^{\alpha} - p^{ \alpha} }{ | x - p |} \,d\sigma_0 \right]\\
		 &= - 8 \pi m C_{CS}^{\alpha} + O( R^{-1}).
\end{align*}
\end{proof}

\begin{proof}[Proof of Theorem \ref{HuiskenYau}]
Let $F_{p,R} : S_1 (0) \rightarrow M $ be an embedding defined by $ y = F_{p,R} (x) = R x + p$, that is, $F_{p,R} (S_1(0) ) = S_R (p)$, the Euclidean sphere centered at $p$ with the radius $R$ in $M$. We consider the perturbation along the normal direction on $S_R(p)$ defined by $\Sigma = \{ y + \lambda \phi\nu  : y \in S_R(p) \}$ for a parameter $\lambda > 0$, and for $\phi \in C^{ 2, \alpha }( S_R(p) )$ with $ \| \phi\|_{ C^{2, \alpha} }\le 1$. We denote the mean curvature on $\Sigma$ by $ H(p, R, \lambda \phi) $. Using this notation, $H(p, R, 0) = {\rm div}_g \nu $, the mean curvature of $S_R (p )$. By Taylor's theorem for mappings between two Banach Spaces, 
\begin{eqnarray*}
	H(p,R, \lambda \phi ) &=& H( p, R, 0) + d H(p, R, 0) \lambda \phi \nonumber\\ 
	&&+ \int_0^1 (1-s) \Big(d^2 H \big (p, R, s(  \lambda \phi) \big) (\lambda \phi, \lambda \phi) \Big) ds
\end{eqnarray*}
where $ d H$ and $ d^2 H$ are the first and second Fr{\'e}chet derivatives in the $\phi$-component. In our case, $ d H(p, R, 0)  $ is the linearized mean curvature operator on $S_R(p) $, i.e. 
$$
		d H(p, R, 0) = \Delta_{S_R(p)} + |A|_g^2 + Ric(\nu, \nu),
$$
where $\Delta_{S_R(p)}$ is the Laplacian operator on $S_R (p) $ with respect to the induced metric from $g$, $A$ is the second fundamental form on $S_R(p)$ and $Ric(\cdot, \cdot) $ is the Ricci curvature of $M$. Then
\begin{eqnarray}
	H(p,R, \lambda \phi ) &=& H( p, R, 0) + \Delta_{S_R(p)} \lambda \phi  + \left( |A|_g^2 + Ric(\nu, \nu)\right)  \lambda \phi  \nonumber\\ 
	&&+ \int_0^1 (1-s) \Big(d^2 H \big (p, R, s(  \lambda \phi) \big) \lambda^2 \phi \phi \Big) ds. \label{GenMC}
\end{eqnarray}

In \cite{HY}, the estimates on the eigenvalues $\lambda_1$ and $\lambda_2$ of $A$ are derived and
$$
		| A|_g^2 = \lambda_1^2 + \lambda_2^2= \frac{2}{R^2} + O( R^{-3}), \quad Ric( \nu, \nu ) = O( R^{-3}).
$$
For the second Fr{\'e}chet derivative in the Taylor expansion, we have 
$$
		d^2 H \big(p, R, s (\lambda \phi) \big)  \lambda^2 \phi \phi = \frac{ \partial^2  }{ \partial t^2} H\big(p, R, t (\lambda \phi) \big) \Big|_{ t= s}.
$$
The right hand side is the second derivative of the mean curvature of the surface $\{y + s (\lambda \phi ) \nu : y\in S_R(p)\}$. For $R$ large, the unit outward normal vector field on $\{y + s ( \lambda \phi ) \nu : y\in S_R(p)\}$ is close to $\nu$, and a straightforward calculation gives us 
$$
		\bigg|\frac{ \partial^2  }{ \partial t^2} H \big(p, R, t ( \lambda \phi) \big ) \bigg| \le c \lambda^2 (| R_{ijkl} | |A| | \phi|^2 + |A| | \phi | | D^2 \phi| + |A|^3 |\phi|^2 )
$$
where the constant $c$ is independent of $p, R, \phi $. 
Let $G$ and $E_1$ be defined as follows, where $G$ is the lower order terms of the mean curvature of $S_R(p)$ from (\ref{MC}), 
\begin{eqnarray*}
	G (y)&=& \frac{1}{2} p_{ij,k} \frac{ ( y^i - p^i )(y^j - p^j) (y^k - p^k)  }{ | y -  p|^3 } + 2 p_{ij} \frac{ ( y^i - p^i )(y^j - p^j ) }{ | y - p |^3 } \nonumber \\ 
  &&- p_{ij,i} \frac{ y^j - p^j }{ | y - p | } - \frac{ p_{ii}}{ | y -  p| } + \frac{1}{2} p_{ii,j} \frac{ y^j- p^j }{ | y-  p | } ,
\end{eqnarray*}
and
\begin{align*}
		E_1(y) = &E_0 + \Big(| A|_g^2 - \frac{2}{R^2} \Big) (\lambda \phi) + Ric ( \nu, \nu ) (\lambda \phi) \\
		&+  \int_0^1 (1-s) \Big(d^2 H\big(p, R, s \lambda \phi) \big) (\lambda\phi, \lambda \phi) \Big) ds.
\end{align*} 
$G$ will give us $C_{CS}$ as indicated in Lemma \ref{Tech} and $E_1(y) $ is an error term bounded as follows for some constant $c$ independent of $p, R, \phi$.
$$
		| E_1 | \le \frac{c}{R^4} (1 + |p|) + \frac{c}{R^3} \bigg( \lambda  | \phi | +  \lambda^2 | \phi | + R^2\lambda^2 |\phi| | D^2 \phi | \bigg).
$$
From identities (\ref{MC}) and (\ref{GenMC}),  
$$
		H(p, R, \lambda \phi ) = \frac{2}{R} - \frac{4 m}{ R^2 } + \frac{ 6m ( y - p ) \cdot p }{ R^4 } + \frac{ 9m^2}{ R^3} + G(y) + \lambda \Delta_{S_R(p)} \phi + \frac{2}{R^2} \lambda \phi + E_1(y).
$$
To find the surface with constant mean curvature, we need to find  $p, R, \phi$ so that 
$$
H(p, R, \lambda \phi ) = \frac{2}{R} - \frac{4 m}{ R^2 }. 
$$
It is equivalent to solving
\begin{eqnarray}
		0 = \frac{ 6 m (y-p) \cdot p  }{  R^4} + \frac{ 9 m^2 }{ R^3 } + G(y) + \lambda \Delta_{S_R(p)} \phi + \frac{2}{R^2} \lambda  \phi + E_1(y) \label{Zero}
\end{eqnarray}

We pull back the equation (\ref{Zero}) via the map $F_{p,R}$, and we get the following equation on $S_1(0)$,
\begin{eqnarray}
		0 &=& \frac{ 6 m x \cdot p  }{  R^3} + \frac{ 9 m^2 }{ R^3 } + G\circ F_{p,R}(x) +\lambda \Delta_{S_1(0)} \psi (x) + \frac{2}{R^2}\lambda \psi(x) \notag \\
		&&+ E_1\circ F_{p,R}(x) \label{PullBack}
\end{eqnarray}
for $p, R, \psi (x)$, where $\Delta_{S_1(0)} $ is the Laplacian on $S_1(0)$ with respect to the pull back metric and $\psi= \phi \circ F_{p,R} (x) = \phi( Rx + p)$ is the pull back of $\phi$. Define the operator $L : C^{2,\alpha}  \big( S_1(0) \big)  \rightarrow C^{0, \alpha} \big(S_1(0) \big)$ by
$$
		L \equiv - \Delta_0 - 2,
$$
where $\Delta_0 $ is standard spherical Laplacian on $S_1(0)$ in $\mathbb{R}^3$. Because the metric $g$ is asymptotically flat, the difference between $\Delta_0$ and $\Delta_{ S_1(0)}$ is small and can be treated as the error term. Therefore, the identity (\ref{PullBack}) is equal to
$$
		0 = \frac{ 6 m x \cdot p  }{  R^3} + \frac{ 9 m^2 }{ R^3 } + G\circ F_{p,R}(x) - \frac{1}{R^2} \lambda L\psi(x) + \widetilde{E}_1\circ F_{p,R}(x), 
$$
where $\widetilde{E}_1 $ has the same bound as $E_1$. We let $\lambda = R^{-a}$ for some fixed $a \in (0,1)$ and multiply $R^{ 2 + a}$ on both sides of the above equation, then
\begin{eqnarray}
		 L\psi(x)  =  \frac{ 6 m x \cdot p  }{  R^{ 1-a } } + \frac{ 9 m^2 }{ R^{ 1 - a } } + R^{2 + a } G\circ F_{p,R}(x) + R^{ 2 + a }\widetilde{E}_1\circ F_{p,R}(x) \label{PullBackL}
\end{eqnarray}
 Furthermore, since $D_x \psi = (D_y \phi )R$, 
\begin{eqnarray*}
		 | \widetilde{E}_1\circ F_{p,R}(x) | &\le& \frac{C}{R^4}\big(1 + |p| \big)+ \frac{C}{R^3} \bigg( \frac{ | \psi | }{ R^a}+ \frac{  | \psi |^2}{ R^{ 2a} } + \frac{| D^2 \psi ||\psi| }{ R^{2a}}\bigg).
\end{eqnarray*}

In order to find $p, R$ and $\psi$  to solve (\ref{PullBackL}), first we perturb $ p= p (R, \psi)$ so that the right hand side of (\ref{PullBackL}) is inside ${\rm Range}L$ for any $R$ and $\psi$. We will also show that $p = C_{CS} + e$ where $e$ is the error term containing lower order terms in $R$ and $\| \psi \|$. Second, using an iteration process and the Schauder estimate, we can find a solution $\psi$ for $R$ large. 
  
\noindent \textbf{1. Perturb the center $p$.} $L$ has a kernel equal to ${\rm span} \{ x^1, x^2, x^3\}$ because translation preserves the mean curvature. Since $L$ is self-adjoint, $C^{0, \alpha} ( S_1 (0) )$ has the $L^2$-orthogonal decomposition $C^{0, \alpha} ( S_1 (0) )= {\rm Range} {L} \oplus {\rm span} \{ x^1, x^2, x^3 \}$. We would like to find $p$ so that the right hand side of (\ref{PullBackL}) is orthogonal to ${\rm span} \{ x^1, x^2, x^3 \}$. That is, we want to find $p$ so that for  $\alpha = 1,2,3$,
\begin{eqnarray}
		&&\int_{S_1(0)} x^{\alpha} \Big( \frac{ 6 m x \cdot p  }{  R^{ 1 - a } } + \frac{ 9 m^2 }{ R^{ 1 - a } } \Big) \,d\sigma_0 \nonumber\\
		&&+ \int_{S_1(0)}x^{\alpha} \Big( R^{2 + a }G\circ F_{p,R}(x) + R^{ 2 + a }\widetilde{E}_1\circ F_{p,R}(x) \Big) \,d\sigma_0 = 0. \label{Perturbation}
\end{eqnarray}
We calculate each term above separately. A direct calculation gives the first integral 
$$
		\int_{S_1(0)} x^{\alpha} \Big( \frac{ 6 m x \cdot p  }{  R^{1-a}} + \frac{ 9 m^2 }{ R^{1-a} } \Big) \,d \sigma_0=  \frac{ 8m\pi p^{\alpha}}{R^{ 1 - a }}.
$$
From the area formula and Lemma \ref{Tech}, we have 
\begin{eqnarray*}
		&&\int_{S_1(0)} x^{\alpha} R^{2 + a }G\circ F_{p,R}(x)  \,d \sigma_0 = \int_{S_R (p) }\frac{ y ^{\alpha} - p^{\alpha} }{ R } R^{2 + a} G(y) R^{-2} \,d\sigma_0 \\
		&=& \frac{1 }{ R^{ 1-a} }  \int_{S_R (p) } ( y ^{\alpha} - p^{\alpha }) G(y) \,d\sigma_0 = \frac{ - 8m\pi C_{CS}^{\alpha}}{ R^{ 1 -a }} + O(R^{-2 + a}).
\end{eqnarray*}
Moreover, the error term can be bounded by
\begin{align*}
		\bigg| \int_{S_1(0)} x^{\alpha}	 R^{ 2 + a }\widetilde{E}_1\circ F_{p,R}(x) \,d\sigma_0  \bigg| &\le  \frac{c}{R^{2-a}}\big(1 + |p| \big)\\
		&+\frac{ c} {R} \big(| \psi | +  | D \psi | | \psi |+ | D \psi |^2+  | \psi| |D\psi| |D^2 \psi| \big).
\end{align*}
Since the ADM mass $ m > 0$, we can choose $p$ 
\begin{eqnarray}
		p (R, \psi ) = C_{CS} + e(R, \psi )  \label{Ctr}
\end{eqnarray}
so that the identity (\ref{Perturbation}) holds, where 
$$
		| e(R, \psi ) | \le  \frac{c}{R}\big(1 + |p| \big)+\frac{ c} {R^a} \big(| \psi | +  | D \psi | | \psi |+ | D \psi |^2+  | \psi| |D\psi| |D^2 \psi| \big).
$$

\noindent \textbf{2. Find the solution $\psi$ by iteration.} We consider the isomorphism $ L: ({\rm Ker}L)^{\perp} \rightarrow {\rm Range} (L)$ for $({\rm Ker}L)^{\perp} \subset C^{2,\alpha}(S_1(0)) $ and $ {\rm Range}(L) \subset C^{0,\alpha} (S_1(0))$. If we denote the right hand side of  (\ref{PullBackL})  by $f(p, R, \psi) $, i.e. 
$$
		f(p, R, \psi) =  - \frac{ 6 m x \cdot p  }{  R^{ 1-a } } - \frac{ 9 m^2 }{ R^{ 1 - a } } - R^{2 + a } G\circ F_{p,R}(x) - R^{ 2 + a }\widetilde{E}_1\circ F_{p,R}(x), 
$$
we know $ f(p(R, \psi), R, \psi) \in {\rm Range}(L)$ for any $R, \psi$ with $\| \psi \|_{C^{2, \alpha } } \le 1$. Therefore, any $\psi_0$ with $\| \psi_0 \|_{C^{2,\alpha}} \le 1$, there is $\psi_1 \in ( {\rm Ker} L)^{\perp}$ such that
$$
		L \psi_1 = f \big(p(R, \psi_0), R, \psi_0\big).
$$ 
Moreover, we use the Schauder estimate and the fact that $\psi_1\in ({\rm Ker}L)^{\perp}$,
\begin{eqnarray*}
		\| \psi_1 \|_{C^{2,\alpha}} &\le& c \left\| f \big(p(R, \psi_0), R, \psi_0\big) \right\|_{ C^{0,\alpha}} \\
		&\le & \frac{ c }{ R^{1 - a}} ( | C_{CS} | + 1 ) + \frac{ c} {R^a} (\| \psi_0 \|_{ C^{2, \alpha}} + \| \psi_0 \|^2_{ C^{2, \alpha}}  ) \\
		&\le & \frac{ c }{ R^{1 - a}} ( | C_{CS} | + 1 ) + \frac{2 c} {R^a}.
\end{eqnarray*}
For any $R$ large enough (independent of $\psi_0$), we have $ \| \psi_1 \|_{C^{2,\alpha}} \le 1$. We continue the iteration process and get a sequence of functions $\{ \psi_k \}_{k = 0}^{ \infty }$ satisfying
$$
		L \psi_{ k+ 1} = f\big( p(R, \psi_k ), R, \psi_k \big) \quad \mbox{ and }\quad \| \psi_{k+1} \|_{ C^{2, \alpha } } \le 1.
$$
The Arzela--Ascoli theorem says that there exists $\psi_{\infty} \in C^{2, \mu} (S_1(0))$ such that a subsequence of $\{ \psi_k \}$ converges to $\psi_{\infty}$ in $C^{2, \mu}$ for $ 0 < \mu < \alpha$. Moreover  $\psi_{\infty}$ is a solution to (\ref{PullBackL}),
$$	
		L \psi_{\infty} =  f \big(p(R, \psi_{\infty}), R, \psi_{\infty}\big).
$$
Let $\phi_{\infty} (y)= \psi_{\infty}\circ F^{-1}_{p,R}(y)$, then the surface $M_R = \{ z : z = y +  R^{-a} \phi_{\infty }  \nu, y \in S_{R} (p)\}$ has constant mean curvature equal to $ (2/R) - (4m/R^2)$.

To complete the proof of Theorem {\ref{HuiskenYau}}, we need to compute 
\begin{align} {\label{CHY}}
		\lim_{R \rightarrow \infty } \frac{ \int_{M_R} z^{\alpha} d \sigma_0  }{ \int_{ M_R} d\sigma_0}.
\end{align}
By the uniqueness of the constant mean curvature foliation, $\{M_R\}$ are equal to those constructed in \cite{HY}, and therefore (\ref{CHY}) converges to the Huisken--Yau center of mass $C^{\alpha}_{ HY}$. We now prove that (\ref{CHY}) also converges to the Corvino--Schoen center of mass $C^{\alpha}_{CS}$. Let $F$ be the diffeomorphism defined by $ F(y) = y + R^{-a} \phi_{\infty} \nu$, then 
\begin{eqnarray*}
				\frac{ \int_{M_R} z^{\alpha} d \sigma_0  }{ \int_{ M_R} d\sigma_0} = \frac{ \int_{S_R(p)} \big( y^{\alpha} + R^{-a}\phi_{\infty}  \nu^{\alpha}  ) JFd \sigma_0 }{ \int_{ S_R(p)} JF d\sigma_0},
\end{eqnarray*}
where $JF$ is the Jacobian from the area formula, $J F = 1 + O(R^{-1-a})$. Now we can use the fact that the area of Euclidean sphere is $O(R^2)$ and the estimate for the center $p$ in (\ref{Ctr}) to conclude
\begin{align*}
		&\frac{ \int_{M_R} z^{\alpha} d \sigma_0  }{ \int_{ M_R} d\sigma_0} \\
		&= p^{\alpha} +  \frac{ \int_{S_R(p)} \big( y^{\alpha} - p^{\alpha} ) (1 + O( R^{ -1-a} )) d\sigma_0 + \int_{S_R(p)} R^{-a} \phi_{\infty}  \nu^{\alpha}  JFd \sigma_0 }{ \int_{ S_R(p)} JF d\sigma_0} \\
		&= C^{\alpha}_{CS} + e(R,\phi_{\infty}) + O(R^{ -a } ).
\end{align*}
Therefore, after taking limits, the Huisken--Yau center of mass $C^{\alpha}_{ HY}$ is equal to the Corvino--Schoen center of mass $C^{\alpha}_{CS}$ and, therefore, is equal to $C^{\alpha}_I$.
\end{proof}

\section*{Acknowledgments}
I would like to thank my thesis advisor Prof. Richard Schoen for suggesting this problem and for the remarkable ideas and comments he provided. I also would like to thank Simon Brendle and Damin Wu for very useful discussions and Justin Corvino for his interest in this work.


\begin{thebibliography}{3}
\bibitem[B86]{Bartnik} Bartnik, R., {\it  Mass of an asymptotically flat manifold}, Comm. Pure and Appl. Math, Volume 39 (1986), pp. 661-693.
\bibitem[BLP03]{BLP} Baskaran, D., Lau, S. R., and Petrov, A. N., {\it Center of mass integral in canonical general relativity.} Ann. Physics 307 (2003), no. 1, 90--131. 
\bibitem[B\'{O}87]{BO} Beig, R. and \'{O} Murchadha, N., {\it The Poincar\'{e} group as the symmetry group of canonical general relativity}.Ann. Physics 174 (1987), no. 2, 463--498. 
\bibitem[CS06]{CS} Corvino, J. and Schoen, R., {\it On the asymptotics for the vacuum Einstein constraint equations}, J. Differential Geom. Volume 73, Number 2 (2006), pp. 185-217.
\bibitem[CW08]{CW} Corvino, J. and Wu, H., {\it On the center of mass of isolated systems}, Class. Quantum Grav. 25 (2008).
\bibitem[HY96]{HY} Huisken, G. and Yau, S.-T., {\it Definition of center of mass for isolated physical systems and unique foliations by stable spheres with constant mean curvature}, Invent. Math, Volume 124 (1996), pp. 281-331.
\bibitem[RT74]{RT} Regge, T. and Teitelboim, C., { \it Role of surface integrals in the Hamiltonian formulation of general relativity}, Ann. Phys. Volume 88 (1974 ), pp. 286-318.
\bibitem[SY79]{SY79} Schoen, R. and Yau, S.T., { \it On the proof of the positive mass conjecture in General Relativity}, Comm. Math. Phys. Volume 65 (1979), pp. 45-76.
\bibitem[SY81]{SY81} Schoen, R, and Yau, S.T., {\it Proof of the positive mass theorem II }, Comm. Math. Phys. Volume 79 (1981), pp. 231-260. 
\bibitem[Y96]{Y} Ye, R., {\it Foliation by constant mean curvature spheres on asymptotically at manifolds}, Geometric Analysis and the Calculus of Variations, Int. Press, Cambridge, 
MA, (1996), pp. 369-383.
\end{thebibliography}
\end{document}